\renewcommand\@biblabel[1]{\textbullet}
\definecolor{amethyst}{rgb}{0.6, 0.4, 0.8}
\newtheorem{deff}{Definition}
\newtheorem{lem}{Lemma}
\newtheorem{prop}{Proposition}
\newtheorem{ex}{Example}
\newtheorem{theorem}{Theorem}
\newtheorem{cor}{Corollary}
\definecolor{amethyst}{rgb}{0.6, 0.4, 0.8}
\definecolor{codegreen}{rgb}{0,0.6,0}
\definecolor{codegray}{rgb}{0.5,0.5,0.5}
\definecolor{codepurple}{rgb}{0.58,0,0.82}
\definecolor{backcolour}{rgb}{0.9,0.9,0.9}
\pgfplotsset{compat=1.17}
\definecolor{Rcolor}{RGB}{150,160,190}
\newcommand{\Rx}{\fontsize{10pt}{12pt}\selectfont
	\raisebox{.3em}{\hspace{1.2em}%
		\llap{\resizebox{1.09em}{.5em}{\color{black}$\bigcirc$}}%
		\llap{\resizebox{1.199em}{.55em}{\color{darkgray}$\bigcirc$}}%
		\llap{\resizebox{1.19em}{.52em}{\color{gray!50}$\bigcirc$}}%
		\llap{\resizebox{1.1em}{.5em}{\color{gray}$\bigcirc$}}%
		\llap{\resizebox{1.25em}{.55em}{\color{gray}$\bigcirc$}}%
	}%
	\hspace{-.85em}%
	\textbf{%
		\textcolor{black}{\textsf{R}}%
		\hspace{-.025em}\raisebox{.01em}{\llap{\textcolor{Rcolor}{\textsf{R}}}}%
}}%
\newbox\rbox
\savebox\rbox{\scalebox{0.1}{\Rx}}
\def\R{\scalebox{\f@size}{\usebox\rbox}\xspace}
\newenvironment{theoremeV}[1]
{\innercustomthm}
{\endinnercustomthm}
\newcommand{\vecun}[1]{\,{\boldsymbol{1}_{|{#1}\mathrm{dsc}({#1})|}}}
\begin{document}
\sloppy
	\title{Centrality and shape-related comparisons in a tree-structured Markov random field}
	\author{Benjamin Côté$^*$, Hélène Cossette$^\dagger$, Etienne Marceau$^\dagger$ \and
        $*$ \textit{Department of Statistics and Actuarial Science, University of Waterloo, Canada} \\  
		$\dagger$ \textit{École d'actuariat, Université Laval, Canada}
	}
        \date{October 26, 2024\\ This version: August 29, 2025}
	\maketitle
	
\begin{abstract}
 Understanding the effects of the choice of the tree on the joint distribution of a tree-structured Markov random field (MRF) is crucial for fully exploiting 
the intelligibility of such probabilistic graphical models. Tools must be developed in this regard: this is the overarching objective of this paper. Our discussion is two-fold. First, we examine concepts specific to network centrality theory. We put forth a new conception of centrality for MRFs that not only accounts for the tree topology, but also the underlying stochastic dynamics. 
In this vein, we compare synecdochic pairs, random vectors comprising a MRF's component and its sum, using stochastic orders. The resulting orderings are transferred to risk-allocation quantities, which therefore serve as new centrality indices tailored to our stochastic framework. 
Second, we shed light on the influence of the tree's shapes, by establishing convex orderings for MRFs encrypted on trees of different shapes. This results in the design of a new partial order on tree shapes. This analysis is done within the framework of a propagation-based family of tree-structured MRFs with the uncommon property of having fixed Poisson marginal distributions unaffected by the dependence scheme. This work is a first step into the analysis of MRFs' trees' shapes and a stepping stone to extending this analysis to a broader framework.        
\end{abstract}

\textbf{Keywords:}
undirected graphical models,
network centrality,
dependence trees,
tree-shape partial order,
multivariate Poisson distributions,
supermodular order,
convex order.
\bigskip

\section{Introduction}

Networks, or \textit{graphs}, abound in the real world and underlie most complex systems and structures, some of which still elude our complete understanding. The human brain's neurological connections, individuals' interactions in a society, and the Internet are such examples. Section~2 of \cite{newman2003structure} provides multiple more real-world examples of networks.
Networks are constituted of vertices and edges, representing agents and their interrelations. The agents' behavior may be deterministic or random; in the latter case, the network schematization of their interrelations describes components of a random vector and their dependence relations. 
This idea is behind Markov random fields (MRF), also referred to as \textit{Markov networks} or \textit{undirected probabilistic graphical models}. MRFs are widely used in statistical modeling (see \cite{cressie2015statistics}); their main characteristic is that their underlying graph translates directly into conditional independencies among their components. For an introduction to MRFs, one may consult \cite{koller2009probabilistic}, \cite{maathuis2018handbook} and \cite{wainwright2008graphical}.

\bigskip

Trees are a specific class of graphs, most convenient due to their cycle-free nature. 
As highlighted in \cite{tunccel2024closeness}, trees play a signicant role in computer science and data analytics, often serving as a means to break down problems involving intricate graphs. In the context of MRFs, they enable exact belief propagation algorithms and hence provide an approximation method for general graph-based MRFs (see \cite{wainwright2003tree}).  
Tree structures arise naturally in some physical settings (e.g., blood vessels, river systems) but are also often uncovered in seemingly unstructured interactions. As a result, 
the literature on probabilistic graphical models has seen much emphasis put on the recovery of these underlying trees from high dimensional data (\cite{bresler2020learning}, \cite{ chow1968approximating}, \cite{hue2021structure}, \cite{nikolakakis2021predictive}). 

\bigskip

The shape of a tree-based network may be dictated by its physical canvas or arise without forethought
; it may also, more importantly, be the object of a critical business decision (e.g.,~the organizational structure of the company), engineering design, 
or marketing strategy. In all cases, involved parties may want to assess their network's strengths and vulnerabilities arising from its shape.  Which agent has a key role in the organization? Which is better positioned to influence others? Is all the pressure put on too few agents? How well does information -- or a virus -- propagate through the network? Would another tree shape enhance or limit this propagation? While these questions have interested researchers in graph theory and social network studies, they seem to have been overlooked in a stochastic framework, wherein network-based phenomena are random in nature, few examples include \cite{la2017cascading} and \cite{ansari2024comparison}. Let us subsume these interrogations under the two following questions in a MRF setting: 
\begin{align*}\tag{Q1}\label{eq:Q1}
    &\text{Which of the two vertex positions grants more influence on the MRF's aggregate behavior?} \\
    \tag{Q2} \label{eq:Q2}
    &\text{Which of the two shapes of trees yields a riskier aggregate distribution?}
\end{align*} 
 The purpose of this paper is to kindle dependence modelers' interest in such graph-theoretic questions and answer (\ref{eq:Q1}) and (\ref{eq:Q2}) for a specific family of MRFs.  
We also believe these results may be used as a stepping stone to extending the discussion to other MRF families. 

\bigskip

For most MRF families, answering these two questions is nearly impossible as dependence parameters and marginals are intertwined with the tree shape. The marginal distribution of any given component of the MRF, as well as its dependencies with other components, varies depending on the vertex's position within the tree. This interconnection
thus befogs the answer to (\ref{eq:Q1}). Similarly, altering the shape of their underlying tree changes the marginal distributions, which in turn impacts the dependencies and complicates the response to (\ref{eq:Q2}). Consequently, MRFs with these limitations do not allow the investigation of these questions. 
\cite{cote2025tree} puts forth a new construction paradigm for MRFs, fixing marginals and correlations through its parameterization. They introduce a new family of tree-structured MRFs under this paradigm, whose collection of distributions we denote by $\mathbb{MPMRF}$.
 This family will be the vehicle of our discussions, leveraging its fixed parameterization to answer (\ref{eq:Q1}) and (\ref{eq:Q2}).
Our interest in $\mathbb{MPMRF}$ does not solely lie in its nice mathematical properties. The stochastic behavior of MRFs from this family is of relevance in itself. It combines spontaneous events at any vertex and propagation of thus events along edges; it may also be represented in terms of conjunct and idiosyncratic shocks (see Section~2 of \cite{cossette2024risk}). As such behaviors are most prominent for network-based phenomena (e.g., viruses on a computer grid, gossip among a community), this analysis for $\mathbb{MPMRF}$ has the potential to reach out of its scope. 
  
\bigskip

We examine the joint distribution of a given component of a MRF and the sum of all its components to address (\ref{eq:Q1}).
We use the term \textit{synecdochic} as a qualifier to refer to this specific pair of random variables, as we recall a synecdoche establishes a semantic link between a part and its whole. We compare synecdochic pairs using stochastic orders to establish which components of the MRF are more contributing to the sum than others, to answer (\ref{eq:Q1}). These comparisons will invite the notion of a vertex's centrality in a network, a concept notably studied in social sciences (see \cite{borgatti2009network, schoch2015positional} for contextualization; seminal works include \cite{borgatti2005centrality}, \cite{borgatti2024analyzing}, \cite{freeman1979centrality} and \cite{friedkin1991theoretical}). This paper's analysis contributes to transfer frameworks and manners of thought specific to network centrality theory to the legibility of probabilistic graphical models. Our conception of centrality not only accounts for the tree topology, but also the underlying stochastic dynamics. 
We furthermore connect this conception to quantities involved in risk allocation problems, which thus become centrality indices tailored to our stochastic framework. 

 \bigskip

In regard to (\ref{eq:Q2}), we consider MRFs defined on trees of different shapes and investigate how their aggregate distributions' riskiness may be compared using the convex order. Remark~1 of \cite{cote2025tree} states that no supermodular order may be established between members of $\mathbb{MPMRF}$ when underlying trees have different shapes. However, thanks to synecdochic pairs' ordering results derived to answer (\ref{eq:Q1}), it is still possible to establish convex orderings for distributions of sums. This allows new perspectives in the study of MRFs, which we explore for $\mathbb{MPMRF}$ through the design of a new tree-comparing partial order. We then derive multiple results with the goal of facilitating comparisons with this new partial order; notably, we trace Hasse diagrams offering an exhaustive look at relations between trees of $d=\{4,5,6,7,8,9\}$ vertices, and provide means of comparing trees of higher dimensions.
We also discuss the new partial order's connections to other existing posets, and, in that vein, its relation to spectral graph theory.  

\bigskip

We recall some results on $\mathbb{MPMRF}$, in Section~\ref{sect:family}, before addressing (\ref{eq:Q1}) in Section~\ref{subsect:OrderAlloc} and (\ref{eq:Q2}) in Section~\ref{sect:SHA}. In Section~\ref{sect:SHA}, we also discuss how (\ref{eq:Q1}) and (\ref{eq:Q2}) are connected to each other.

 \section{Tree-structured MRFs with Poisson marginal distributions}
	\label{sect:family}
	
 In this section, we provide a summary of key results derived in \cite{cote2025tree} pertaining to the family of distributions $\mathbb{MPMRF}$. We begin with some definitions and notation.

\bigskip

    Consider a simple undirected graph $\mathcal{G}=(\mathcal{V},\mathcal{E})$ where $\mathcal{V}=\{1,2,\ldots, d\}$, $d\in\mathbb{N}_1=\mathbb{N}\backslash\{0\}$, is a set of vertices and $\mathcal{E}\subseteq \mathcal{V}\times\mathcal{V}$ a set of edges.
    For a graph $\mathcal{G}$, a path from a vertex $u$ to a vertex $v$, denoted by $\mathrm{path}(u,v)$, is a set of edges such that $u$ and $v$ participates in only one edge, while every other involved vertex participates in an even number of edges. We say these vertices are \textit{on} the path. A tree, denoted by $\mathcal{T}$, is a graph where one and only one path exists for any pair of vertices. 
    For a tree $\mathcal{T}$, one may choose a vertex $r\in\mathcal{V}$ to be the \textit{root}; the rooted version of the tree is then denoted by $\mathcal{T}_r$. Choosing a root allows to refer to vertices on a tree according to their relative positions to one another, as the following filial relations become well defined. For a rooted tree $\mathcal{T}_r$, the descendants of vertex $v\in\mathcal{V}$, denoted by $\mathrm{dsc}(v)$, is the set of vertices for which $v$ is on their path to the root, $\mathrm{dsc}(v) = \left\{u \in \mathcal{V}:  \exists w\in \mathcal{V}, (v,w) \in \mathrm{path}(u,r) \right\}$, $v\in\mathcal{V}$. The children of $v$, denoted by $\mathrm{ch}(v)$, is the set of its descendants also participating in an edge with it,  $\mathrm{ch}(v) = \left\{ j \in \mathrm{dsc}(v): (v,j) \in \mathcal{E}\right\}$, $v\in\mathcal{V}$.
    The parent of $v$, denoted by $\mathrm{pa}(v)$, is the sole vertex participating with it in an edge that is not its children; it is incidentally the first vertex on its path to the root. The root $r$ has no parent. We recommend Chapter~3 of \cite{saoub2021graph} for an introduction to rooted trees and filial relations between vertices.  

 \bigskip

Following the definition in Section 4.2 of \cite{cressie2015statistics}, a vector of random variables $\boldsymbol{X} = (X_v,\,v\in\mathcal{V})$ is a MRF defined on a tree $\mathcal{T} = (\mathcal{V},\mathcal{E})$ if it satisfies the local Markov property, that is, for any two of its components, say $X_{u}$ and $X_{w}$, such that $(u,w) \not\in \mathcal{E}$, 
		\begin{equation*}
			X_{u}\perp \!\!\!\perp X_{w}\left| \left\{X_{j},\,(u,j)\in \mathcal{E}\right\}\right.,
		\end{equation*}
where $\perp\!\!\!\perp$ denotes conditional independence. 	

\bigskip

     A stochastic representation relying on the binomial thinning operator, denoted by $\circ$, introduced in \cite{steutel1983integer}, characterizes the family of distributions $\mathbb{MPMRF}$. For a count random variable $N$ and a thinning parameter $\alpha\in(0,1)$, binomial thinning is the operation $\alpha\circ N = \sum_{i=1}^N I^{(\alpha)}_i$, where $\{I^{(\alpha)},\, i\in\mathbb{N}_1\}$ is a sequence of independent Bernoulli random variables taking 1 with probability $\alpha$, and with the convention $\sum_{i=1}^0 x_i = 0$. We hereby transcribe Theorem~1 of \cite{cote2025tree} providing the stochastic representation.
    	
	\begin{theorem}[Stochastic representation]
		\label{th:StoDynamics}
		Given a tree $\mathcal{T}=(\mathcal{V},\mathcal{E})$ and a chosen root $r\in\mathcal{V}$, let $\mathcal{T}_r$ be its rooted version. Consider a vector of dependence parameters $\boldsymbol{\alpha} = (\alpha_e, e \in \mathcal{E})$, $\boldsymbol{\alpha}\in(0,1)^{d-1}$ and define $\boldsymbol{L} = (L_v, \, v \in \mathcal{V})$ as a vector of independent random variables such that $L_v \sim$~Poisson$(\lambda(1-\alpha_{(\mathrm{pa}(v),v)}))$, $\lambda >0$, for $v\in\mathcal{V}$, with the convention $\alpha_{(\mathrm{pa}(r),r)} = 0$ since the root has no parent. Let $\boldsymbol{N} = (N_v, \, v \in \mathcal{V})$ be a vector of count random variables whose components are defined by
		\begin{equation}
			N_v = 
            \begin{cases}
                L_r, & v=r \\
                L_v + \alpha_{(\mathrm{pa}(v),v)} \circ N_{\mathrm{pa}(v)}, & v \in \mathrm{dsc}(r)            
            \end{cases}
            , \quad \text{for every } v\in\mathcal{V}.
			\label{eq:StoDynamics}
		\end{equation} 
		Then, $\boldsymbol{N}$ is a tree-structured MRF with Poisson marginal distributions of parameter $\lambda$. 
        Throughout the paper, we take the convention of writing $\boldsymbol{N} \sim \text{MPMRF}(\lambda,\boldsymbol{\alpha},\mathcal{T})$. 
	\end{theorem}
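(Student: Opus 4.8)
The plan is to establish the two assertions of the theorem separately: that each $N_v$ is Poisson$(\lambda)$, and that $\boldsymbol{N}$ satisfies the local Markov property on $\mathcal{T}$. Both proofs exploit the recursive, root-to-leaf nature of \eqref{eq:StoDynamics} together with the two defining properties of binomial thinning: that it commutes with the Poisson family, in the sense that $\alpha \circ M \sim$ Poisson$(\alpha\mu)$ whenever $M \sim$ Poisson$(\mu)$, and that the Bernoulli sequence realizing a thinning is drawn independently of all other randomness in the system. A convenient bookkeeping device throughout is the probability generating function (PGF) identity $P_{\alpha\circ M}(t) = P_M(1-\alpha+\alpha t)$, which one checks by conditioning on $M$.

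For the marginals, I would induct on the (finite) topological ordering of $\mathcal{T}_r$ outward from the root. The base case is immediate: $N_r = L_r \sim$ Poisson$(\lambda(1-0)) = $ Poisson$(\lambda)$ by the convention $\alpha_{(\mathrm{pa}(r),r)}=0$. For the inductive step, suppose $N_{\mathrm{pa}(v)} \sim$ Poisson$(\lambda)$ and write $\alpha = \alpha_{(\mathrm{pa}(v),v)}$. The thinning property gives $\alpha \circ N_{\mathrm{pa}(v)} \sim$ Poisson$(\lambda\alpha)$; since $L_v \sim$ Poisson$(\lambda(1-\alpha))$ is independent both of $N_{\mathrm{pa}(v)}$ and of the Bernoulli variables effecting the thinning, the two summands in \eqref{eq:StoDynamics} are independent, and the convolution of independent Poissons yields $N_v \sim$ Poisson$(\lambda(1-\alpha)+\lambda\alpha) = $ Poisson$(\lambda)$. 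Carrying the PGF through these steps makes each manipulation transparent.

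For the Markov property, the key structural observation is that \eqref{eq:StoDynamics} endows $\boldsymbol{N}$ with a Bayesian-network structure on the tree oriented away from $r$: the fresh randomness $(L_v, \text{thinning Bernoullis at } v)$ is independent of all primitives used to build the vertices preceding $v$ in the ordering, so conditionally on $N_{\mathrm{pa}(v)}$ the variable $N_v$ is independent of every previously constructed component. Processing the vertices in topological order then yields the factorization
\begin{equation*}
\Pr(\boldsymbol{N}=\boldsymbol{n}) = \Pr(N_r=n_r)\prod_{v\in\mathrm{dsc}(r)}\Pr\!\left(N_v=n_v \mid N_{\mathrm{pa}(v)}=n_{\mathrm{pa}(v)}\right),
\end{equation*}
in which every factor depends on $\boldsymbol{n}$ only through the pair of values carried by a single edge of $\mathcal{T}$. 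Since the maximal cliques of a tree are precisely its edges, this is a clique factorization, and the factorization theorem (this direction requiring no positivity hypothesis) delivers the global---hence local---Markov property with respect to $\mathcal{T}$.

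I expect the delicate point to be the rigorous justification of the conditional-independence claim underlying the factorization: that conditioning on $N_{\mathrm{pa}(v)}$ alone screens $N_v$ from the entire constructed past, not merely from its parent's generating noise. This demands tracking which independent primitives enter each $N_u$ and arguing that the randomness introduced at $v$ is independent of the $\sigma$-algebra generated by all earlier vertices. The tree's acyclicity---each non-root vertex having a unique parent, so that distinct branches draw on disjoint noise---is exactly what makes this screening hold, and verifying it carefully is the crux of the argument.
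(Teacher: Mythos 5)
The paper never proves Theorem~\ref{th:StoDynamics} itself: the statement is transcribed verbatim from Theorem~1 of \cite{cote2025tree}, and the proof is delegated to that reference, so there is no in-paper argument to compare against line by line. Your proof is nonetheless correct, and it matches the route the paper's surrounding discussion attributes to the source: Poisson$(\lambda)$ marginals by induction outward from the root, using $\alpha\circ M\sim\mathrm{Poisson}(\alpha\mu)$ for $M\sim\mathrm{Poisson}(\mu)$ plus independence of the fresh noise $(L_v,\text{thinning Bernoullis})$, and then the Markov property from the directed factorization $\Pr(\boldsymbol{N}=\boldsymbol{n})=\Pr(N_r=n_r)\prod_{v\in\mathrm{dsc}(r)}\Pr\left(N_v=n_v\mid N_{\mathrm{pa}(v)}=n_{\mathrm{pa}(v)}\right)$, whose factors live on edges, i.e., the cliques of $\mathcal{T}$ --- exactly the property the paper invokes right after the theorem (``the joint pmf factorizes over cliques of the tree and hence the MRF also satisfies the global Markov property''). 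Your identification of the crux is also right: conditioning on $N_{\mathrm{pa}(v)}$ screens $N_v$ from the entire constructed past because $N_v$ is a deterministic function of $N_{\mathrm{pa}(v)}$ and of noise independent of the past $\sigma$-algebra, which is guaranteed by each non-root vertex having a unique parent. Two small points worth making explicit if you write this up: the conditional probabilities in your factorization are everywhere well defined because the joint pmf is strictly positive on $\mathbb{N}^d$ (each $L_v$ has full support), and since each child has a single parent the moral graph of the directed tree is $\mathcal{T}$ itself, so the clique factorization you obtain is indeed a factorization with respect to the undirected tree, from which global (hence local) Markov follows without any positivity hypothesis.
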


The stochastic representation in (\ref{eq:StoDynamics}) always leads to a valid multivariate Poisson distribution. Moreover,
as stated in Theorem 2 of \cite{cote2025tree}, the choice of the root $r\in\mathcal{V}$ in (\ref{eq:StoDynamics}) has no impact on the joint distribution of $\boldsymbol{N}$. Choosing a root simply offers convenience for notation, proofs and programming purposes. The joint probability mass function (pmf) of $\boldsymbol{N}$ factorizes over cliques of the tree and hence the MRF also 
satisfies the global Markov property, that is 
	\begin{equation*}
		N_{u}\perp \!\!\!\perp N_{w}\left| \left\{N_{j},\, j\in S(u,w)\right\}\right.,\quad u,w\in\mathcal{V},
	\end{equation*}
where $S(u,w)$ is a separator for $u$ and $w$, a set of vertices such that every path from $u$ to $v$ has at least one participating vertex from that set. On a tree, $S(u,w)$ is any vertex on $\mathrm{path}(u,w)$. Chapter 3 of \cite{lauritzen1996graphical} provides a discussion on the Markov properties for probabilistic graphical models. 

\bigskip
We hereby transcribe the expression of the joint probability generating function (pgf) of $\boldsymbol{N}$, which will be useful throughout the paper. For notation purposes, let $v\mathrm{dsc}(v) = \{v\}\cup\mathrm{dsc}(v) \subseteq \mathcal{V}$ and let $\boldsymbol{t}_{v\mathrm{dsc}(v)}= (t_j,\, j \in v\mathrm{dsc}(v))$, for $v\in\mathcal{V}$.
Assuming that $\boldsymbol{N} \sim \text{MPMRF}(\boldsymbol{{\lambda}},\boldsymbol{\alpha},\mathcal{T})$ with $\mathcal{T} = (\mathcal{V},\mathcal{E})$, 
    and, for a chosen root $r\in\mathcal{V}$, letting $\mathcal{T}_r$ be the rooted version of $\mathcal{T}$,  the joint pgf of ${\boldsymbol{N}}$ is given by
	\begin{equation}
		\mathcal{P}_{\boldsymbol{N}}({\boldsymbol{t}}) = \mathrm{E}\left[\prod_{v\in\mathcal{V}} t_v^{N_v}  \right] = \prod_{v \in \mathcal{V}} \mathrm{e}^{\lambda(1-\alpha_{(\mathrm{pa}(v),v)})\left(\eta^{\mathcal{T}_r}_v(\boldsymbol{t}_{v\mathrm{dsc}(v)})-1\right)},
		\quad \boldsymbol{t} = (t_v,\,v\in\mathcal{V}) \in [-1,1]^d, \label{eq:JointPGF}
	\end{equation}
         where  $\eta^{\mathcal{T}_r}_v$, for every $v\in\mathcal{V}$, is a joint pgf recursively defined as such:
        \begin{equation}
\eta^{\mathcal{T}_r}_v(\boldsymbol{t}_{v\mathrm{dsc}(v)})= t_v \prod_{j \in \mathrm{ch}(v)}\left(1 - \alpha_{(v,j)} + \alpha_{(v,j)} 	\eta^{\mathcal{T}_r}_j(\boldsymbol{t}_{j\mathrm{dsc}(j)})\right), \label{eq:jointpgf-h}
	    \end{equation}
        with $\eta_v^{\mathcal{T}_r}(\boldsymbol{t}_{v\mathrm{dsc}(v)})=t_v$ when $\mathrm{ch}(v) = \emptyset$. The filial relations dictating the recursiveness of $\eta_v^{\mathcal{T}_r}$ are taken according to the rooted tree in superscript. 

    \bigskip

Throughout, define the random variables $M$ as the sum of the components of $\boldsymbol{N}$, that is, $M = \sum_{v\in\mathcal{V}} N_v$. 
Consider $\boldsymbol{N} \sim \text{MPMRF}(\lambda,\boldsymbol{\alpha},\mathcal{T})$ with $\mathcal{T} = (\mathcal{V},\mathcal{E})$ and 
$\boldsymbol{N}^{\prime} \sim \text{MPMRF}(\lambda,\boldsymbol{\alpha}^{\prime},\mathcal{T}^{\prime})$ with $\mathcal{T}^{\prime} = (\mathcal{V},\mathcal{E}^{\prime})$, 
with identical dependence parameter for every edge, meaning
$\alpha_e = \alpha$ for every $e \in \mathcal{E}$ and
$\alpha_e^{\prime} = \alpha$ for every $e \in \mathcal{E}^{\prime}$, $\alpha\in(0,1)$. Suppose
$\mathcal{E} \neq \mathcal{E}'$.
Hence, the only difference in the respective distributions of $\boldsymbol{N}$ and $\boldsymbol{N}^{\prime}$ comes from the distinct topologies (shapes) of trees $\mathcal{T}$ and
$\mathcal{T}'$. We are interested in the distributions of $M$ and $M^{\prime}$.
In Section~7 of \cite{cote2025tree}, the authors observe through numerical examples that the underlying tree itself seems to have an impact on the distribution of the sum of the MRFs.
The next challenge is to find a criterion related to the shapes of the trees $\mathcal{T}$ and $\mathcal{T}'$ allowing to compare $M$ and $M^{\prime}$ within the rigorous framework provided by the theory of stochastic orders. 
Before doing so, we need to investigate the contribution of the component $N_v$ and the contribution of the component $N_w$ on the distribution of $M$ with regards to their positions within the tree $\mathcal{T}$; this is done in the next section.

\section{Centrality: a stochastic dynamics perspective}
\label{subsect:OrderAlloc}

To answer the question (\ref{eq:Q1}), we examine the joint distribution of a component of the random vector $\boldsymbol{N}$ and its sum, that is, for a given vertex $v\in\mathcal{V}$, the joint distribution of the synecdochic pair $(N_v,M)$. 
Let us preface this section with an example, which requires the following definition, to motivate this examination.

\begin{deff}[Supermodular Order]
	Two vectors of random variables, $\boldsymbol{X}$ and $\boldsymbol{X}^{\prime}$, are ordered according to the \textit{supermodular order}, denoted by $\boldsymbol{X} \preceq_{sm} \boldsymbol{X}^{\prime}$, if 
	$\mathrm{E}[\zeta(\boldsymbol{X})] \leq \mathrm{E}[\zeta(\boldsymbol{X}^{\prime})]$
	for every supermodular function $\zeta$, given the expectations exist. A supermodular function $\zeta:\mathbb{R}^{d}\mapsto\mathbb{R}$ is such that 
	$\zeta(\boldsymbol{x}) + \zeta(\boldsymbol{x}^{\prime}) \leq  	\zeta(\boldsymbol{x}\wedge\boldsymbol{x}^{\prime}) + \zeta(\boldsymbol{x}\vee\boldsymbol{x}^{\prime})$, 
	holds for all $\boldsymbol{x},\boldsymbol{x}^{\prime}\in\mathbb{R}^d$, with $\wedge$ denoting the componentwise minimum and $\vee$ the componentwise maximum. 
\end{deff}
Recall that, in the bivariate case, Definition~\ref{def:RiskContributing} amounts to $(X_1,X_2)\preceq_{sm}(X_1^{\prime}, X_2^{\prime})$ if $F_{X_1,X_2}(x_1,x_2) \leq F_{X_1^{\prime},X_2^{\prime}}(x_1,x_2)$, for all $(x_1,x_2)\in\mathbb{R}^2$, that is, the supermodular order corresponds to the concordance order. We prefer to employ $\preceq_{sm}$ as it will be necessary for upcoming proofs. 
We recommend Chapters~1 and~2 of \cite{muller2002comparison} and Chapters~3 and~9 of \cite{shaked2007} for an introduction to these stochastic orders and their applications.

\begin{ex}
\label{ex:MotivationNvM}
    Assume $\mathcal{T}=(\mathcal{V},\mathcal{E})$ is the 10-vertex star tree depicted in Figure~\ref{fig:Motivationexample} and consider $\boldsymbol{N}\sim \text{MPMRF}(1, \boldsymbol{\alpha}, \mathcal{T})$, with $\boldsymbol{\alpha} = (0.5\,\boldsymbol{1}_{|\mathcal{E}|})$, where $\boldsymbol{1}_{k}$ is a $k$-long vector of 1's, meaning every edge has the same parameter. The specific shape $\mathcal{T}$ makes the value taken by $N_1$ have a direct influence on the values of all other $N_v$, $v\in\mathcal{V}$. This influence is reflected in the distribution of $M$. 
    As a result, the covariance between $M$ and $N_1$ is greater than for other components: we have $\mathrm{Cov}(N_1,M) = 5.5$, whereas $\mathrm{Cov}(N_i,M) = 3.5$, $i = \{2,\ldots,10\}$. Plotting the joint cdfs of $(N_1,M)$ and $(N_i,M)$, $i = \{2,\ldots,10\}$, we observe that the first curve dominates the others, see Figure~\ref{fig:Motivationexample}. This suggests that the impact of a vertex's position goes beyond a simple ordering of covariances. Taking a more extensive perspective by examining the joint distribution of synecdochic pairs allows us to seize this impact more completely: the curves indeed indicate a supermodular ordering between $(N_1,M)$ and $(N_i,M)$, $i\in\{2,\ldots,10\}$.
    The objective is to provide a criterion 
    based on the vertices' positions in $\mathcal{T}$ for such an ordering. 
\end{ex}

    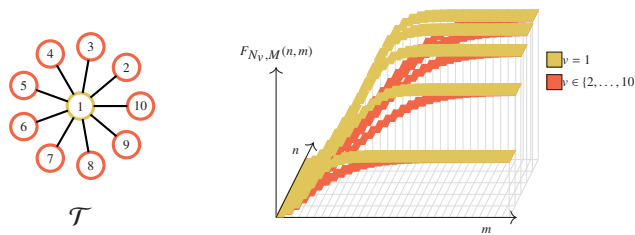
\begin{figure}[H]
\centering
\begin{tikzpicture}[every node/.style={text=Black, circle, draw = Red!75, inner sep=0mm, outer sep = 0mm, minimum size=3.5mm, fill = White, very thick}, node distance = 3mm, scale = 0.1, thick]
    \node[draw = Goldenrod!85!Black] (1) {\tiny 1};
    \node (2) at (40:8cm) {\tiny 2};
    \node (3) at (80:8cm) {\tiny 3};
    \node (4) at (120:8cm) {\tiny 4};
    \node (5) at (160:8cm) {\tiny 5};
    \node (6) at (200:8cm) {\tiny 6};
    \node (7) at (240:8cm) {\tiny 7};
    \node (8) at (280:8cm) {\tiny 8};
   \node (9) at (320:8cm) {\tiny 9};
    \node (10) at (360:8cm) {\tiny 10};
                
    \draw (1) --  (2);
    \draw (1) -- (3);
    \draw (1) -- (4);
    \draw (1) -- (5);
    \draw (1) -- (6);
    \draw (1) -- (7);
    \draw (1) -- (8);
    \draw (1) -- (9);
    \draw (1) -- (10);

    \node[draw = none, below = 11mm of 1] {$\mathcal{T}$};
    \node[draw=none, right = 27mm of 6, rectangle]{
   
   \begin{tikzpicture}[xscale = 0.1, yscale = 0.2]

\foreach \x in { 
1,2,3,4,5,6,7,8,9,10,11,12,13,14,15,16,17,18,19,20,21,22,23,24,25,26,27,28,29,30,31
 }
 \draw[draw = Black!15] (\x,0,0) -- (\x,0,-10);

\foreach \z in { 
 0,1,2,3,4,5
 }
 {\draw[draw = Black!15] (0,0,-2*\z) -- (31,0,-2*\z);
}

\foreach \z/\y in {
0/0.367835106160382, 
1/0.73398319271743,
2/0.917057235995953,
3/0.978081917088794,
4/0.993338087362004,
5/0.993338087362004
}
{
  \draw[draw = Black!15] (31,0,-2*\z) -- (31,10*\y,-2*\z);
}

\foreach \x/\z/\y in {
0/4/0.00408677143846408,
1/4/0.0224908787863064,
2/4/0.0640535239099339,
3/4/0.127302572782171,
4/4/0.201932983280819,
5/4/0.278716265075843,
6/4/0.356657938075743,
7/4/0.44105821864534,
8/4/0.535872465612035,
9/4/0.638014546039292,
10/4/0.737947104940994,
11/4/0.824889016886348,
12/4/0.89195420828704,
13/4/0.938022126136546,
14/4/0.966390847996816,
15/4/0.982163404995365,
16/4/0.990134810333901,
17/4/0.99381997960586,
18/4/0.995387239378234,
19/4/0.996003578886632,
20/4/0.996228750260273,
21/4/0.996305494575097,
22/4/0.996329989363024,
23/4/0.996337336172276,
24/4/0.996339413431646,
25/4/0.996339968696597,
26/4/0.99634010938971,
27/4/0.996340143263231,
28/4/0.996340151029836,
29/4/0.996340152729173,
30/4/0.996340153084666
}
{
\draw[draw = Black!15] (\x,0,-10) -- (\x,10*\y,-10);
}

 \draw[->] (0,0,0) -- (32,0,0)  node[very near end, below, draw = none, fill = none, rectangle]{\tiny $m$};
\draw[->] (0,0,0) -- (0,10,0) node[above, draw = none, fill = none, rectangle]{\tiny $F_{N_v,M}(n,m)$};

\foreach \m/\n/\Fmn in {
0/0/0.00408677143846407,
1/0/0.0204418391677861,
2/0/0.0532318021759856,
3/0/0.0973669610876397,
4/0/0.142897645906775,
5/0/0.182681551299164,
6/0/0.215362360894578,
7/0/0.243008335079891,
8/0/0.267579159139589,
9/0/0.289436847509258,
10/0/0.307980054776971,
11/0/0.322797322492913,
12/0/0.334151953349543,
13/0/0.342746708014556,
14/0/0.349298728248209,
15/0/0.354318144847109,
16/0/0.35812393444783,
17/0/0.36094576224552,
18/0/0.362990368403408,
19/0/0.36445010619157,
20/0/0.365486839115278,
21/0/0.366222186975098,
22/0/0.366741611515268,
23/0/0.367105064740486,
24/0/0.367356264551142,
25/0/0.367527967172871,
26/0/0.367644463033814,
27/0/0.367723156708904,
28/0/0.367776110883981,
29/0/0.367811553336145,
30/0/0.367835106160382,
0/1/0.00408677143846407,
1/1/0.0204498211432518,
2/1/0.0533356146264278,
3/1/0.0980144833336249,
4/1/0.145491109386335,
5/1/0.190231102161631,
6/1/0.232546488713531,
7/1/0.275240145030142,
8/1/0.319595366354666,
9/1/0.364477170786332,
10/1/0.408048977954552,
11/1/0.449433396065742,
12/1/0.48877068273785,
13/1/0.526240654325494,
14/1/0.561373071730904,
15/1/0.59321871100168,
16/1/0.620973439183155,
17/1/0.644400336565999,
18/1/0.663797123307139,
19/1/0.679701550658207,
20/1/0.692634671649384,
21/1/0.703016141355326,
22/1/0.711200983468717,
23/1/0.717531846427209,
24/1/0.722350589952665,
25/1/0.725976597450808,
26/1/0.728682487482826,
27/1/0.730685617432813,
28/1/0.732154334495827,
29/1/0.733219368827665,
30/1/0.73398319271743,
0/2/0.00408677143846407,
1/2/0.0204538121309847,
2/2/0.0533875208516489,
3/2/0.0983382444566176,
4/2/0.146787841126115,
5/2/0.194005877592865,
6/2/0.241138552623007,
7/2/0.291356050005267,
8/2/0.345603469962204,
9/2/0.401997332424869,
10/2/0.458083439543343,
11/2/0.512751432852157,
12/2/0.566080047432003,
13/2/0.617987627480964,
14/2/0.667410243472251,
15/2/0.712668994078965,
16/2/0.752398191550817,
17/2/0.786127623726238,
18/2/0.814200500759004,
19/2/0.837327272891526,
20/2/0.856208587916437,
21/2/0.871413118545439,
22/2/0.883430669445441,
23/2/0.89274523727057,
24/2/0.899847752653427,
25/2/0.905200912589776,
26/2/0.909201499707332,
27/2/0.912166847794768,
28/2/0.91434344630175,
29/2/0.915923276573425,
30/2/0.917057235995953,
0/3/0.00408677143846407,
1/3/0.020455142460229,
2/3/0.0534048229267226,
3/3/0.0984461648309484,
4/3/0.147220085039375,
5/3/0.195264136069943,
6/3/0.244002573926165,
7/3/0.296728018330309,
8/3/0.354272837831384,
9/3/0.414504052971048,
10/3/0.474761593406273,
11/3/0.533857445114295,
12/3/0.591849835663387,
13/3/0.64856995186612,
14/3/0.702755967386033,
15/3/0.752485755104727,
16/3/0.796206442340038,
17/3/0.833370052779651,
18/3/0.864334959909626,
19/3/0.889869180302632,
20/3/0.910733226672121,
21/3/0.927545444275477,
22/3/0.940840564771016,
23/3/0.951149700885024,
24/3/0.95901347355368,
25/3/0.964942350969432,
26/3/0.969374503782167,
27/3/0.972660591248753,
28/3/0.975073150237058,
29/3/0.976824579155345,
30/3/0.978081917088794,
0/4/0.00408677143846407,
1/4/0.0204554750425401,
2/4/0.0534091484454911,
3/4/0.0984731449245311,
4/4/0.14732814601769,
5/4/0.195578700689212,
6/4/0.244718579251955,
7/4/0.298071010411569,
8/4/0.356440179798678,
9/4/0.417630733107593,
10/4/0.478931131872005,
11/4/0.539133948179829,
12/4/0.598292282721233,
13/4/0.656215532962409,
14/4/0.711592398364479,
15/4/0.762439945361168,
16/4/0.807158505037343,
17/4/0.845180660043004,
18/4/0.876868574697281,
19/4/0.903004657155408,
20/4/0.924364386361042,
21/4/0.941578525707987,
22/4/0.955193038602409,
23/4/0.965750816788637,
24/4/0.973804903778744,
25/4/0.979877710564346,
26/4/0.984417754800876,
27/4/0.98778402711225,
28/4/0.990255576220885,
29/4/0.992049904800825,
30/4/0.993338087362004
}
{
\filldraw[draw = Red!75, fill = Red!75] (\m,10*\Fmn,-2*\n) -- (\m+1,10*\Fmn,-2*\n) -- (\m+1,10*\Fmn,-2*\n-2) -- (\m,10*\Fmn,-2*\n-2);
}; 
        \foreach \m/\n/\Fmn in {
0/4/0.00408677143846408,
1/4/0.0224908787863064,
2/4/0.0640535239099339,
3/4/0.127302572782171,
4/4/0.201932983280819,
5/4/0.278716265075843,
6/4/0.356657938075743,
7/4/0.44105821864534,
8/4/0.535872465612035,
9/4/0.638014546039292,
10/4/0.737947104940994,
11/4/0.824889016886348,
12/4/0.89195420828704,
13/4/0.938022126136546,
14/4/0.966390847996816,
15/4/0.982163404995365,
16/4/0.990134810333901,
17/4/0.99381997960586,
18/4/0.995387239378234,
19/4/0.996003578886632,
20/4/0.996228750260273,
21/4/0.996305494575097,
22/4/0.996329989363024,
23/4/0.996337336172276,
24/4/0.996339413431646,
25/4/0.996339968696597,
26/4/0.99634010938971,
27/4/0.996340143263231,
28/4/0.996340151029836,
29/4/0.996340152729173,
30/4/0.996340153084666,
0/3/0.00408677143846408,
1/3/0.0224905462039953,
2/3/0.0640487014664234,
3/3/0.127268940395964,
4/3/0.201782177989141,
5/3/0.278225469721774,
6/3/0.355415705988205,
7/3/0.438496458638595,
8/3/0.531413859220338,
9/3/0.631272517505269,
10/3/0.728861228571961,
11/3/0.813720869027514,
12/3/0.879164672658038,
13/3/0.924113949580084,
14/3/0.951792347874434,
15/3/0.96718068730466,
16/3/0.974957802825698,
17/3/0.978553125576956,
18/3/0.98008216841539,
19/3/0.980683477364101,
20/3/0.98090315722805,
21/3/0.98097802983211,
22/3/0.981001927207006,
23/3/0.981009094829863,
24/3/0.981011121425133,
25/3/0.981011663147174,
26/3/0.981011800408772,
27/3/0.981011833456114,
28/3/0.98101184103329,
29/3/0.981011842691179,
30/3/0.981011843038002,
0/2/0.00408677143846408,
1/2/0.022489215874751,
2/2/0.0640294116923811,
3/2/0.127134410851135,
4/2/0.201178956822433,
5/2/0.276262288305496,
6/2/0.350446777638056,
7/2/0.428249418611616,
8/2/0.513579433653553,
9/2/0.604304403369174,
10/2/0.692517723095831,
11/2/0.769048277592178,
12/2/0.82800653014203,
13/2/0.868481243354234,
14/2/0.89339834738491,
15/2/0.90724981654184,
16/2/0.914249772792886,
17/2/0.917485709461341,
18/2/0.918861884564012,
19/2/0.919403071273977,
20/2/0.919600785099159,
21/2/0.919668170860162,
22/2/0.919689678582935,
23/2/0.919696129460209,
24/2/0.919697953399083,
25/2/0.919698440949484,
26/2/0.919698564485019,
27/2/0.919698594227643,
28/2/0.919698601047105,
29/2/0.919698602539205,
30/2/0.919698602851346,
0/1/0.00408677143846408,
1/1/0.0224852248870181,
2/1/0.0639715423702545,
3/1/0.126730822216648,
4/1/0.199369293322307,
5/1/0.270372744056664,
6/1/0.335539992587607,
7/1/0.397508298530679,
8/1/0.460076156953198,
9/1/0.523400060960892,
10/1/0.583487206667439,
11/1/0.635030503286169,
12/1/0.674532102594008,
13/1/0.701583124676687,
14/1/0.718216345916335,
15/1/0.727457204253381,
16/1/0.732125682694448,
17/1/0.734283461114497,
18/1/0.73520103300988,
19/1/0.735561853003605,
20/1/0.735693668712486,
21/1/0.735738593944318,
22/1/0.735752932710723,
23/1/0.735757233351246,
24/1/0.735758449320934,
25/1/0.735758774356414,
26/1/0.735758856713763,
27/1/0.735758876542233,
28/1/0.735758881088549,
29/1/0.735758882083284,
30/1/0.735758882291378,
 0/0/0.00408677143846407,
1/0/0.0224772429115524,
2/0/0.0638558037260011,
3/0/0.125923644947674,
4/0/0.195749966322056,
5/0/0.258593655559,
6/0/0.305726422486708,
7/0/0.336026058368806,
8/0/0.353069603552486,
9/0/0.361591376144326,
10/0/0.365426173810654,
11/0/0.366994954674152,
12/0/0.367583247497964,
13/0/0.367786887321591,
14/0/0.367852342979185,
15/0/0.367871979676464,
16/0/0.367877502497573,
17/0/0.367878964420808,
18/0/0.367879329901617,
19/0/0.367879416462861,
20/0/0.367879435939141,
21/0/0.36787944011263,
22/0/0.367879440966298,
23/0/0.36787944113332,
24/0/0.367879441164636,
25/0/0.367879441170273,
26/0/0.367879441171249,
27/0/0.367879441171411,
28/0/0.367879441171438,
29/0/0.367879441171442,
30/0/0.367879441171442
        }
{
\filldraw[draw = Goldenrod!85!black, fill = Goldenrod!85!black] (\m,10*\Fmn,-2*\n) -- (\m+1,10*\Fmn,-2*\n) -- (\m+1,10*\Fmn,-2*\n-2) -- (\m,10*\Fmn,-2*\n-2) -- (\m,10*\Fmn,-2*\n);
};              
\draw[->] (0,0,0) -- (0,0,-13) node[very near end, left, draw = none, rectangle, fill = none]{\tiny $n$}; 
\filldraw[draw = Black, fill = Goldenrod!85!black] (36, 10) -- (36, 11) -- (38, 11) -- (38,10) node[midway, right, draw = none, fill = none]{\tiny $v = 1$} -- (36,10) ; 
\filldraw[draw = Black, fill = Red!75] (36, 8.5) -- (36, 9.5) -- (38, 9.5) -- (38,8.5) node[midway, right, draw = none, fill = none]{\tiny $v \in \{2,\ldots,10\}$} -- (36,8.5) ; 

    \end{tikzpicture}
 };
 \end{tikzpicture}
        \caption{(Left) Tree $\mathcal{T}$ from Example~\ref{ex:MotivationNvM}. (Right) Joint cdfs of the random vectors $(N_v,M)$, $v\in\mathcal{V}=\{1,\ldots,10\}$. } 
        \label{fig:Motivationexample}
    \end{figure}
    
The following definition describes the synecdochic pair's ordering observed in Example~\ref{ex:MotivationNvM}. It clarifies what is intended by \textit{influence on the overall dynamics} in (\ref{eq:Q1}). 

\begin{deff}
\label{def:RiskContributing}
    Consider the vector of random variables $\boldsymbol{X} = (X_v, v \in \mathcal{V})$ and the sum of its components $S = \sum_{v\in\mathcal{V}} X_v$. For $u,w\in\mathcal{V}$, 
    we say that $X_u$ is \textit{less contributing} than $X_{w}$ to $S$ if $(X_v,S) \preceq_{sm} (X_{w},S)$. 
\end{deff}

Hence, in respect to Definition~\ref{def:RiskContributing}, 
we aim to compare the synecdochic pairs for distinct vertices $v,w\in\mathcal{V}$ with regards to their positions within the tree $\mathcal{T}$. The key result of this section is given in Theorem~\ref{th:OrderingAlloc}, in which we provide the conditions to establish such comparisons through stochastic ordering. We then discuss how orderings of synecdochic pairs affect common quantities pertaining to risk allocation, and how this relates to the theory of network centrality. 

\bigskip

As discussed in Section~3 of \cite{cote2025tree}, the joint pgf of $\boldsymbol{N}$, given in (\ref{eq:JointPGF}), highlights an alternative stochastic representation, which proves to be essential to the analysis of the joint distribution of synecdochic pairs. We formalize this result in Lemma~\ref{th:GandH} for easier referencing later on. 

\begin{lem}
\label{th:GandH}
Given a tree $\mathcal{T}$, assume $\boldsymbol{N}=(N_v,\, v\in\mathcal{V})\sim \text{MPMRF}(\lambda,\boldsymbol{\alpha}, \mathcal{T})$ and, for a chosen root $r\in\mathcal{V}$, let $\mathcal{T}_r$ be the rooted version of $\mathcal{T}$. For $v\in\mathcal{V}$, we define $\boldsymbol{G}_v^{\mathcal{T}} = (G_{v,j}^{\mathcal{T}_r}, j\in v\mathrm{dsc}(v))$ as a vector of discrete random variables whose joint pgf is $\eta_{v}^{\mathcal{T}_r}$ as given by (\ref{eq:jointpgf-h}). Let $H_v^{\mathcal{T}_r} = \sum_{j\in v\mathrm{dsc}(v)} G_{v,j}^{\mathcal{T}_r}$; hence, its joint pgf is given by $\eta_{v}^{\mathcal{T}_r}(t\vecun{v})$. Then, $\boldsymbol{N}$ and $M$ admit the following alternative stochastic representations:  
\begin{equation}
    \boldsymbol{N} = \sum_{v\in\mathcal{V}}\sum_{k=1}^{L_v} \boldsymbol{G}_{v,(k)}^{\mathcal{T}_r}, \quad \text{and} \quad   M = \sum_{v\in\mathcal{V}}\sum_{k=1}^{L_v} H_v^{\mathcal{T}_r},
    \label{eq:alternateconstructM}
\end{equation}
 with sums taken componentwise according to the subscript $j$, and $\{\boldsymbol{G}_{v,(k)}^{\mathcal{T}_r}, k\in\mathbb{N}_1\}$ being a sequence of independent vectors of random variables having the same joint distribution as $\boldsymbol{G}_{v}^{\mathcal{T}_r}$, $v\in\mathcal{V}$, and independent of $\boldsymbol{L}$ defined as in Theorem~\ref{th:StoDynamics}.
\end{lem}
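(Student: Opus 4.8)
The plan is to verify both identities at the level of probability generating functions: since the joint pgf on a neighbourhood of the origin determines the law of an $\mathbb{N}^d$-valued random vector, it suffices to show that the right-hand side of each representation carries the pgf already prescribed for $\boldsymbol{N}$ in (\ref{eq:JointPGF}), and for $M$ as its diagonal. Throughout I would fix the chosen root and work with the rooted tree $\mathcal{T}_r$.

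For the first representation, set $\boldsymbol{Y} = \sum_{v\in\mathcal{V}}\sum_{k=1}^{L_v}\boldsymbol{G}_{v,(k)}^{\mathcal{T}_r}$. The one point requiring care is the indexing: each $\boldsymbol{G}_{v,(k)}^{\mathcal{T}_r}$ carries coordinates only at positions $j\in v\mathrm{dsc}(v)$, so I would embed it into $\mathbb{N}^d$ by padding the remaining coordinates with zeros. Componentwise, the $j$-th entry of $\boldsymbol{Y}$ then aggregates exactly the contributions of those $v$ with $j\in v\mathrm{dsc}(v)$, that is, of $j$ itself together with its ancestors. Keeping this bookkeeping straight is the main obstacle; the analytic content that follows is a standard compound-Poisson computation.

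I would then compute $\mathcal{P}_{\boldsymbol{Y}}(\boldsymbol{t})=\mathrm{E}[\prod_{j\in\mathcal{V}}t_j^{Y_j}]$. By the assumed independence of the blocks indexed by distinct $v$, this pgf factorizes over $v$; within a block, conditioning on $L_v$ and using that the $\boldsymbol{G}_{v,(k)}^{\mathcal{T}_r}$ are i.i.d.\ copies of $\boldsymbol{G}_v^{\mathcal{T}_r}$ with joint pgf $\eta_v^{\mathcal{T}_r}$ gives $\mathrm{E}[(\eta_v^{\mathcal{T}_r}(\boldsymbol{t}_{v\mathrm{dsc}(v)}))^{L_v}]$. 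Averaging over $L_v\sim\mathrm{Poisson}(\lambda(1-\alpha_{(\mathrm{pa}(v),v)}))$ via the Poisson pgf $s\mapsto e^{\lambda(1-\alpha_{(\mathrm{pa}(v),v)})(s-1)}$ produces the factor $e^{\lambda(1-\alpha_{(\mathrm{pa}(v),v)})(\eta_v^{\mathcal{T}_r}(\boldsymbol{t}_{v\mathrm{dsc}(v)})-1)}$. Taking the product over $v\in\mathcal{V}$ reproduces (\ref{eq:JointPGF}) verbatim, so $\boldsymbol{Y}$ and $\boldsymbol{N}$ share the same joint pgf and hence the same law.

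For $M$, I would simply sum the coordinates of the representation just established and interchange the order of summation:
\[
M=\sum_{j\in\mathcal{V}}\sum_{v:\,j\in v\mathrm{dsc}(v)}\sum_{k=1}^{L_v}G_{v,j,(k)}^{\mathcal{T}_r}=\sum_{v\in\mathcal{V}}\sum_{k=1}^{L_v}\sum_{j\in v\mathrm{dsc}(v)}G_{v,j,(k)}^{\mathcal{T}_r}.
\]
The innermost sum is, by definition, a copy $H_{v,(k)}^{\mathcal{T}_r}$ of $H_v^{\mathcal{T}_r}=\sum_{j\in v\mathrm{dsc}(v)}G_{v,j}^{\mathcal{T}_r}$, whose pgf is the diagonal evaluation $\eta_v^{\mathcal{T}_r}(t\vecun{v})$; this yields $M=\sum_{v\in\mathcal{V}}\sum_{k=1}^{L_v}H_v^{\mathcal{T}_r}$ as claimed. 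Alternatively, one checks directly that $\mathcal{P}_M(t)=\mathcal{P}_{\boldsymbol{N}}(t\boldsymbol{1}_d)=\prod_{v\in\mathcal{V}} e^{\lambda(1-\alpha_{(\mathrm{pa}(v),v)})(\eta_v^{\mathcal{T}_r}(t\vecun{v})-1)}$, which is precisely the compound-Poisson pgf of the stated sum. I expect no difficulty here beyond the same ancestor-indexing remark.
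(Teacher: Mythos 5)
Your proposal is correct. The paper does not actually prove this lemma: its ``proof'' is a pointer to equation~(19) of \cite{cote2025tree}, and the surrounding discussion makes clear that the intended justification is precisely the one you give --- reading the compound-Poisson structure off the joint pgf in (\ref{eq:JointPGF}). Your computation (factorize over $v$ by independence, condition on $L_v$, apply the Poisson pgf, match with (\ref{eq:JointPGF}), then obtain the representation of $M$ by summing coordinates and interchanging sums) is the standard argument that equation encodes, so you have simply made explicit what the paper outsources. The only point worth flagging is that the factorization over $v$ requires the sequences $\{\boldsymbol{G}_{v,(k)}^{\mathcal{T}_r},\,k\in\mathbb{N}_1\}$ to be mutually independent \emph{across} $v$ (not just within each $v$ and of $\boldsymbol{L}$); the lemma's wording leaves this implicit, and you correctly identified and assumed it.
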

\begin{proof}
  See equation (19) of \cite{cote2025tree} and comments therein. 
\end{proof}

As for the sequences of joint pgfs $\eta_{v}^{\mathcal{T}_r}$, $r,v\in\mathcal{V}$ in (\ref{eq:jointpgf-h}), 
we specify in superscript the rooted tree on which $\boldsymbol{G}_v^{\mathcal{T}_r}$ and $H_v^{\mathcal{T}_r}$ define themselves, for the sake of clarity. A component $G_{v,j}^{\mathcal{T}_r}$, $j\in v\mathrm{dsc}(v)$, of $\boldsymbol{G}_v^{\mathcal{T}_r}$, $v,r\in\mathcal{T}_r$,  should be interpreted as such: it is the number of events having taken place at vertex $v$ that have propagated to vertex $j$ downward the tree $\mathcal{T}$ according to a rooting in $r$. 
The stochastic representation of $\boldsymbol{N}$ in (\ref{eq:alternateconstructM}) then amounts to counting events after having classified them according to their origin in terms of propagation. Similarly, the random variable $H_v^{\mathcal{T}_r}$ should be interpreted as the total number of events, occurring anywhere on the tree, originating from $v$ according to a rooting in $r$. It thus summarizes the impact vertex $v$ has on the other vertices down the tree, or if $v=r$, to all vertices in $\mathcal{V}$. 
Therefore, its distribution is the key to comparing synecdochic pairs. 
This elicits the criterion presented in the following theorem, 
characterizing, for the family $\mathbb{MPMRF}$, the statement that a component contributes less than another to the sum $M$, as per Definition~\ref{def:RiskContributing}.

\begin{theorem}
	\label{th:OrderingAlloc}
Given a tree $\mathcal{T}$, assume $\boldsymbol{N}=(N_v,\,v\in\mathcal{V}) \sim \text{MPMRF}({\lambda},\boldsymbol{\alpha},\mathcal{T})$ with $\lambda>0$ and $\boldsymbol{\alpha}\in(0,1)^{d-1}$, and let $M=\sum_{v\in\mathcal{V}}N_v$.   
    We define the two discrete random variables $H_v^{\mathcal{T}_v}$ and $H_w^{\mathcal{T}_w}$ as in Lemma~\ref{th:GandH} with pgfs given by $ \mathcal{P}_{H_v^{\mathcal{T}_v}}(t) = \eta_{v}^{\mathcal{T}_v}(t\vecun{v})$ and $ \mathcal{P}_{H_w^{\mathcal{T}_w}}(t) = \eta_{w}^{\mathcal{T}_w}(t\vecun{w})$ as provided in \eqref{eq:jointpgf-h}, $v,w\in\mathcal{V}$, $t\in[-1,1]$.
    If 
    \begin{equation} \label{eq:Criterion}
        H_v^{\mathcal{T}_v}\preceq_{st} H_w^{\mathcal{T}_w},    
    \end{equation}
where $\preceq_{st}$ denotes the usual stochastic order, then $(N_v,M) \preceq_{sm} (N_w,M)$, meaning that $N_v$ is less contributing than $N_w$ to $M$ within the MRF $\boldsymbol{N}$.
\end{theorem}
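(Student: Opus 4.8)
The plan is to reduce the bivariate supermodular order to the concordance order, then to express the required inequality as a weighted sum that vanishes when summed fully, and finally to settle it through a single-crossing comparison of conditional laws driven by the hypothesis \eqref{eq:Criterion}.

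First I would record that the two pairs share their marginals: by Theorem~\ref{th:StoDynamics} both $N_v$ and $N_w$ are Poisson$(\lambda)$, and $M$ is common to both pairs and root-invariant. Since for bivariate vectors the supermodular order coincides with the concordance order (equivalently, with the pointwise ordering of joint cdf's under identical marginals), as recalled just after the definition of $\preceq_{sm}$, it suffices to prove $F_{N_v,M}(n,m)\le F_{N_w,M}(n,m)$ for all $(n,m)$; because the marginals agree this is in turn equivalent to $\mathrm{E}[f(N_v)g(M)]\le\mathrm{E}[f(N_w)g(M)]$ for every pair of bounded nondecreasing functions $f,g$.

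Next I would extract the conditional structure from Lemma~\ref{th:GandH}. Rooting $\mathcal{T}$ at $v$ is legitimate by root-invariance, and since the root has no parent one gets $N_v=L_v\sim$ Poisson$(\lambda)$ while \eqref{eq:alternateconstructM} yields $M\mid\{N_v=n\}\stackrel{\mathrm{d}}{=}R_v+\sum_{k=1}^{n}H_{v,(k)}^{\mathcal{T}_v}$, where the $H_{v,(k)}^{\mathcal{T}_v}$ are i.i.d.\ copies of $H_v^{\mathcal{T}_v}$ and $R_v=\sum_{u\neq v}\sum_{k=1}^{L_u}H_{u,(k)}^{\mathcal{T}_v}$ is an independent remainder. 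This gives the compact joint pgf $\mathcal{P}_{N_v,M}(s,t)=\mathcal{P}_M(t)\exp\!\big(\lambda\,h_v(t)(s-1)\big)$ with $h_v(t)=\mathcal{P}_{H_v^{\mathcal{T}_v}}(t)$, and symmetrically for $w$. The hypothesis $H_v^{\mathcal{T}_v}\preceq_{st}H_w^{\mathcal{T}_w}$ gives $h_v(t)\ge h_w(t)$ on $[0,1]$ and, at the level of conditional laws, says that $[M\mid N_w=n]$ is stochastically larger than $[M\mid N_v=n]$ for large $n$ but smaller for small $n$, since the remainder $R_w$ is forced to be stochastically smaller than $R_v$ by the constraint that the marginal law of $M$ is the same in both decompositions.

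The crux is then the following. Writing $p_n=P(N=n)$ and $e_n=\mathrm{E}[g(M)\mid N_w=n]-\mathrm{E}[g(M)\mid N_v=n]$, the target difference is $\sum_n p_n f(n)e_n$, and $\sum_n p_n e_n=\mathrm{E}[g(M)]-\mathrm{E}[g(M)]=0$ because $M$ has the same distribution in both pairs. By the standard sign-change lemma, $\sum_n p_n f(n)e_n\ge 0$ for every nondecreasing $f$ as soon as $\{e_n\}_n$ changes sign exactly once, from nonpositive to nonnegative, as $n$ increases. Coupling the increments so that $H_{v,(k)}^{\mathcal{T}_v}\le H_{w,(k)}^{\mathcal{T}_w}$ almost surely (possible by \eqref{eq:Criterion}) makes the $w$-conditional grow faster in $n$, which together with $e_0\le 0$ gives the two ends of the sign pattern; the remaining point is that the crossing is unique. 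I expect this single-crossing property to be the main obstacle: it should follow from a variation-diminishing (total-positivity) property of the convolution-power kernel $n\mapsto\mathrm{law}(\sum_{k\le n}H_{\cdot,(k)})$ combined with the balancing effect of the common marginal of $M$, but verifying it rigorously — rather than merely the pointwise pgf inequality $\mathcal{P}_{N_v,M}\le\mathcal{P}_{N_w,M}$, which is genuinely weaker than the coefficientwise (cdf) ordering — is where the real work lies.
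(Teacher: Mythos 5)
Your set-up is sound and in fact parallels the first half of the paper's own proof (Appendix~\ref{sect:proofConvexOrderMShape}): the reduction of bivariate $\preceq_{sm}$ to concordance, the decomposition $M=\sum_{k=1}^{L_v}H^{\mathcal{T}_v}_{v,(k)}+R_v$ under the rooting at $v$, and the factorization $\mathcal{P}_{N_v,M}(s,t)=\mathcal{P}_M(t)\,\mathrm{e}^{\lambda h_v(t)(s-1)}$ with $h_v=\mathcal{P}_{H_v^{\mathcal{T}_v}}$ are all correct. The genuine gap is exactly where you flag it, and it is fatal as the argument stands: the single-crossing of $n\mapsto e_n$ is never proved, and the two facts you offer in its support do not hold as stated. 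First, the common marginal of $M$ forces only $\mathcal{P}_{R_w}(t)=\mathcal{P}_{R_v}(t)\,\mathrm{e}^{\lambda(h_v(t)-h_w(t))}\geq\mathcal{P}_{R_v}(t)$ on $[0,1]$, i.e.\ a Laplace-transform-type ordering of the remainders; it does \emph{not} force $R_w\preceq_{st}R_v$, so even the endpoint claim $e_0\leq 0$, which amounts to $\mathrm{E}[g(R_w)]\leq\mathrm{E}[g(R_v)]$ for all nondecreasing $g$, is unsupported. Second, coupling $H^{\mathcal{T}_v}_{v,(k)}\leq H^{\mathcal{T}_w}_{w,(k)}$ a.s.\ (legitimate under \eqref{eq:Criterion}) compares only the compound parts; the two conditional laws carry \emph{different}, uncoupled remainders $R_v$ and $R_w$, so it yields no comparison of $\mathrm{E}[g(M)\mid N_v=n]$ with $\mathrm{E}[g(M)\mid N_w=n]$ for any $n$. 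Neither end of the claimed sign pattern, nor uniqueness of the crossing, is therefore established, and no total-positivity argument is supplied.

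The paper's proof shows what is needed to close this: make the residual terms \emph{identical}, not merely comparable. Equating the two root expansions of $\mathcal{P}_M$ (root-invariance, Theorem~2 of \cite{cote2025tree}) yields a common sequence $\varsigma_k=p_{H_v^{\mathcal{T}_v}}(k)+\sum_{j\neq v}(1-\alpha)p_{H_j^{\mathcal{T}_v}}(k)=p_{H_w^{\mathcal{T}_w}}(k)+\sum_{j\neq w}(1-\alpha)p_{H_j^{\mathcal{T}_w}}(k)$; writing $a_k,b_k$ for the two pmfs, the mass $\varsigma_k-\max(a_k,b_k)$ is the same under both rootings and is packed into a single random variable $Y$, giving $(N_v,M)\stackrel{d}{=}\bigl(\sum_{i=1}^{\ddot{N}}\ddot{I}_{v,i},\,\sum_{i=1}^{\ddot{N}}\ddot{H}_{v,i}+Y\bigr)$ and the analogous representation for $w$ with the \emph{same} Poisson count $\ddot{N}$ and the \emph{same} $Y$, the only difference being the i.i.d.\ summand pair. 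The hypothesis \eqref{eq:Criterion} then enters precisely as nonnegativity of the cdf differences $\sum_{k\leq j}(a_k-b_k)$, which are the transfer coefficients in the mass-transfer characterization of $\preceq_{sm}$ of \cite{meyer2015beyond}, so $(\ddot{I}_v,\ddot{H}_v)\preceq_{sm}(\ddot{I}_w,\ddot{H}_w)$; closure of $\preceq_{sm}$ under compounding (Lemma~\ref{th:SupermodularSeverity}) and under addition of independent noise to one coordinate (Corollary~9.A.10 of \cite{shaked2007}) then gives $(N_v,M)\preceq_{sm}(N_w,M)$ with no single-crossing argument at all. If you wish to salvage your route, this re-packaging step is the missing ingredient.
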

\begin{proof}
	The proof is provided in \ref{sect:proofConvexOrderMShape}. 
\end{proof}

	Theorem~\ref{th:OrderingAlloc} states that, despite having identical marginal distributions, some random variables constituting the MRF $\boldsymbol{N}$ have more key roles in the distribution of $M$ -- are more contributing to $M$, as per Definition~\ref{def:RiskContributing} -- than others. As discussed in Chapter~9 of \cite{shaked2007}, the supermodular order compares the strength of positive dependence within random vectors. The observations we have made in Example~\ref{ex:MotivationNvM} are then confirmed by this result. Let us provide a further example to illustrate the result of Theorem~\ref{th:OrderingAlloc}.

\begin{figure}[H]
\centering
	\begin{tikzpicture}[every node/.style={text=Black, circle, draw = Blue!50!black, inner sep=0.2mm,   minimum size=3mm, fill = White, very thick}, node distance = 1mm, scale=0.1, thick]
					
					\node (1b) {\tiny 1};
					\node [right=of 1b] (2b) {\tiny 2};
					\node [right=of 2b] (3b) {\tiny 3};
					\node [above =of 3b] (4b) {\tiny 4};
					\node [below =of 3b] (5b) {\tiny 5};
					\node [right =of 3b] (6b) {\tiny 6};
					
					\draw (1b) --  (2b);
					\draw (2b) -- (3b);
					\draw (3b) -- (4b);
					\draw (3b) -- (5b);            
					\draw (3b) -- (6b);

					\node[draw=none, below = of 5b](t2){$\mathcal{T}^{\mathcolor{White}{\prime}}$};
					\end{tikzpicture}
					\caption{Trees $\mathcal{T}$ from Example~\ref{ex:exCentralite-1}.}
					\label{fig:M-centralite}
\end{figure}

\begin{ex}
\label{ex:exCentralite-1}
 Consider $\boldsymbol{N}$ defined on tree $\mathcal{T}$, depicted in Figure~\ref{fig:M-centralite}, and with dependence parameters $\boldsymbol{\alpha}$ such that $\alpha_e = \alpha$ for every $e\in\mathcal{E}$. We aim to show that $N_2$ is less contributing than $N_3$ to $M$. For that matter, from Theorem~\ref{th:OrderingAlloc}, it suffices to show $H_2^{\mathcal{T}_2} \preceq_{st} H_3^{\mathcal{T}_3}$. We have 
\begin{align*}
    \eta_2^{\mathcal{T}_2}(t) &= t \; (1-\alpha + \alpha t) \; (1-\alpha + \alpha t (1-\alpha+\alpha t)^3); \\
    \eta_3^{\mathcal{T}_3}(t) &= t \; (1-\alpha + \alpha t)^3 \; (1-\alpha + \alpha t (1-\alpha+\alpha t)), \quad \text{for every }t\in[-1,1]. 
\end{align*}
From these pgfs, it is easily seen that indeed $H_2^{\mathcal{T}_2} \preceq_{st} H_3^{\mathcal{T}_3}$. Moreover, note that, clearly, $N_4$, $N_5$ and $N_6$ are equally contributing to $M$ since $\eta_4^{\mathcal{T}_4} = \eta_5^{\mathcal{T}_5} = \eta_6^{\mathcal{T}_6}$. 
\end{ex}

It may seem challenging to apply the result of Theorem \ref{th:OrderingAlloc} because one needs to verify the condition $H_v^{\mathcal{T}_v}\preceq_{st} H_w^{\mathcal{T}_w}$. However, thanks to the recursive nature of the sequences of pgfs $\{\eta_j^{\mathcal{T}_r},\,j\in\mathcal{V}\}$, $r\in\mathcal{V}$, provided in (\ref{eq:jointpgf-h}), it is often not necessary to develop $\mathcal{P}_{H_v^{\mathcal{T}_v}}(t)$  and $ \mathcal{P}_{H_w^{\mathcal{T}_w}}(t)$ completely to establish stochastic dominance.
Other results pertaining to the criterion in \eqref{eq:Criterion} are explored later in Section~\ref{sect:SHA}, and further examples showing first order stochastic dominance ordering between $H_v^{\mathcal{T}_v}$ and $H_w^{\mathcal{T}_w}$ are provided therein. 
	The condition $H_v^{\mathcal{T}_v}\preceq_{st}H_w^{\mathcal{T}_w}$ in \eqref{eq:Criterion} considers two different roots, $v$ and $w$. Therefore, both random variables could not be part of the same stochastic representation given by (\ref{eq:alternateconstructM}). The choice of these roots is not anecdotal: for a root $r\in\mathcal{V}$, one has $r\mathrm{dsc}(r) = \mathcal{V}$. 
 The pgf given by
	$\eta_{v}^{\mathcal{T}_v}(t\vecun{v})$, because of its recursive definition (\ref{eq:jointpgf-h}), thus encapsulates all the information needed on the shape of the tree, translating it into the distribution of $H_v^{\mathcal{T}_v}$; likewise for $H_w^{\mathcal{T}_w}$, $v,w\in\mathcal{V}$. 
	
	\bigskip
    
	 The fact that the dependence within a synecdochic pair varies in strength depending on the vertices' relative position in the tree relates to the notion of \textit{centrality} of a vertex to the graph. Heuristically, the result of Theorem~\ref{th:OrderingAlloc} suggests a vertex's centrality should be understood as such in the stochastic context provided by the family $\mathbb{MPMRF}$: events occurring at the most central vertices are those which have the most possibilities to propagate to others, and reciprocally, these vertices also have the most chance of welcoming propagated events. This interpretation comes from that of $H_v^{\mathcal{T}
 _v}$, $v\in\mathcal{V}$, discussed above, arising from Lemma~\ref{th:GandH}, and the criterion in (\ref{eq:Criterion}). Accordingly, Theorem~\ref{th:OrderingAlloc} indicates that if $M$ has taken a high value, the random variables associated with these central vertices probably also had a high outcome since they are the most determinant to $M$. In \cite{freeman1979centrality}, the author explores the notion of centrality and highlights the fuzziness of what constitutes a vertex's centrality. This is why we reason centrality following the dynamics at play on the graph, as advised and discussed in \cite{friedkin1991theoretical} and \cite{borgatti2005centrality}.
 Centrality is generally understood regarding centrality indices; a discussion in the following subsection will confirm the coherence of our heuristic interpretation of the concept for the family $\mathbb{MPMRF}$. 

\bigskip

The study of synecdochic pairs directly connects to risk allocation theory; one may consult \cite{overbeeck2000allocation}, \cite{tasche2007capital} and \cite{dhaene2012optimal} for an introduction on risk allocation concepts. We henceforth present a few relevant quantities. 
The expected allocation of $N_v$ to $M$ for a total outcome $k\in\mathbb{N}$ is given by 
\begin{equation}
    \mathrm{E}[N_v\mathbb{1}_{\{M=k\}}] = \lambda \; \eta_v^{\mathcal{T}_v}(t\boldsymbol{1}_d) \; \mathcal{P}_M(t),\quad  v\in\mathcal{V}, \quad t\in[-1,1],
    \label{eq:expealloc}
\end{equation}
see Corollary~4 of~\cite{cote2025tree}.
Expected allocations allow to perform risk management tasks such as risk-sharing and risk allocation, as they are notably employed in the computation of the conditional mean risk-sharing rule introduced in \cite{denuit2012convex}, given by
 $   \mathrm{E}[N_v|M=k] = \mathrm{E}[N_v\mathbb{1}_{\{M=k\}}]/p_M(k)$, with $k\in\mathbb{N}$ such that $p_M(k)>0$, and $p_M$ denotes the pmf of $M$.
A related quantity is the cumulative expected allocation, defined as $\mathrm{E}[N_v\mathbb{1}_{\{M\leq k\}}]$, for $k\in\mathbb{N}$. 
Expected cumulative allocations are relevant to compute the contribution to the Tail-Value-at-Risk (TVaR) under Euler's principle, as discussed in \cite{blier2022generating}. The risk measure TVaR at a confidence level $\kappa$ is given by $\mathrm{TVaR}_\kappa(X) = \tfrac{1}{1-\kappa}\int_{\kappa}^{1}\mathrm{VaR}_u(X)\,\mathrm{d}u$, with $\mathrm{VaR}_\kappa(X) = \mathrm{inf}\{x\in\mathbb{R}:F_X(x)\geq\kappa\}$, $\kappa\in[0,1)$. Since $M$ takes on values in $\mathbb{N}$, the contribution of $N_v$, $v\in\mathcal{V}$, is given by 
\begin{align}
	\mathcal{C}^{\mathrm{TVaR}}_{\kappa}(N_v;\, M) &= \frac{1}{1-\kappa}\left( \mathrm{E}[N_v\mathbb{1}_{\{M>\mathrm{VaR}_{\kappa}(M)\}}] + \frac{F_M(\mathrm{VaR}_{\kappa}(M))-\kappa}{p_M(\mathrm{VaR}_{\kappa}(M))} \mathrm{E}[N_v\mathbb{1}_{\{M=\mathrm{VaR}_{\kappa}(M)\}}]\right) \notag\\
	&= \frac{1}{1-\kappa}\left( \mathrm{E}[N_v] -  \mathrm{E}[N_v\mathbb{1}_{\{M\leq\mathrm{VaR}_{\kappa}(M)\}}] + \frac{F_M(\mathrm{VaR}_{\kappa}(M))-\kappa}{p_M(\mathrm{VaR}_{\kappa}(M))} \mathrm{E}[N_v\mathbb{1}_{\{M=\mathrm{VaR}_{\kappa}(M)\}}]\right),\notag
\end{align}
for $\kappa \in [0,1)$; see Section~2 in \cite{mausser2018long}. One easily verifies that $\sum_{v\in\mathcal{V}} \mathcal{C}^{\mathrm{TVaR}}_{\kappa}(N_v;\, M) = \mathrm{TVaR}_{\kappa}(M)$. Contributions to TVaR represent the portion of the aggregate risk enclosed within each component of $\boldsymbol{N}$. 
On a related note, the covariance $\mathrm{Cov}(N_v,M)$ pertains to the contribution of $N_v$ to the standard deviation of $M$ under Euler's principle, and rather quantifies the portion of enclosed variability. One may consult \cite{tasche2007capital} for further insights on contributions to TVaR and to standard deviation under Euler's rule. 

 \bigskip
	
	The ordering of synecdochic pairs provided by Theorem~\ref{th:OrderingAlloc} manifests itself through an ordering of covariances, expected cumulative allocations and contributions to TVaR under Euler's rule, as a consequence of the following proposition. 
	
	\begin{prop}
		\label{th:EffectsofSupermodularity}
		If $N_v$ is less contributing than $N_w$ to $M$, with respect to Definition~\ref{def:RiskContributing},
  then the following holds:
		\begin{enumerate}
			\item[(a)] $\mathrm{Cov}(N_v,M)\leq\mathrm{Cov}(N_w,M)$;
			\item[(b)] $\mathrm{E}[N_v\mathbb{1}_{\{M\leq k\}}] \geq \mathrm{E}[N_w\mathbb{1}_{\{M\leq k\}}]$, for all $k\in\mathbb{N}$;
			\item[(c)] $\mathcal{C}^{\mathrm{TVaR}}_{\kappa}(N_v;M)\leq\mathcal{C}^{\mathrm{TVaR}}_{\kappa}(N_w;M)$, for all $\kappa\in[0,1)$.
		\end{enumerate}
	\end{prop}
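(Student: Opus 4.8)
The plan is to read every claim as a comparison of expectations of a suitably chosen test function, pitted against the ordering $(N_v,M)\preceq_{sm}(N_w,M)$ furnished by the hypothesis. The structural feature I would exploit first is that \emph{the marginals are preserved}: by Theorem~\ref{th:StoDynamics} both $N_v$ and $N_w$ are Poisson$(\lambda)$, so $\mathrm{E}[N_v]=\mathrm{E}[N_w]=\lambda$, while $M$ is literally the \emph{same} random variable in both synecdochic pairs, so that $F_M$, $p_M$ and $\mathrm{VaR}_\kappa(M)$ are common to $N_v$ and $N_w$. Consequently, for any test function it will suffice to compare the ``mixed'' terms, the marginal contributions cancelling.

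The unifying device I would isolate as a preliminary step is: for any nondecreasing $h:\mathbb{R}\to\mathbb{R}$, the map $\psi(x,m)=x\,h(m)$ is supermodular on $\mathbb{R}^2$. This is checked directly from the lattice definition: taking $x\le x'$, the supermodularity inequality is an equality when $m\le m'$, while for $m>m'$ one computes
\begin{equation*}
\psi(x,m')+\psi(x',m)-\psi(x,m)-\psi(x',m')=(x'-x)\bigl(h(m)-h(m')\bigr)\ge 0,
\end{equation*}
the sign following from $x'\ge x$ and the monotonicity of $h$. Hence $\mathrm{E}[N_v\,h(M)]\le\mathrm{E}[N_w\,h(M)]$ for every such $h$, straight from the definition of $\preceq_{sm}$ (whenever the expectations exist, which holds here since Poisson variables and their sums have finite moments).

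With this in hand, (a) and (b) are immediate. For (a) take $h(m)=m$, giving $\mathrm{E}[N_vM]\le\mathrm{E}[N_wM]$; subtracting $\mathrm{E}[N_v]\mathrm{E}[M]=\mathrm{E}[N_w]\mathrm{E}[M]$ yields $\mathrm{Cov}(N_v,M)\le\mathrm{Cov}(N_w,M)$. For (b) take the nondecreasing indicator $h(m)=\mathbb{1}_{\{m>k\}}$ to get $\mathrm{E}[N_v\mathbb{1}_{\{M>k\}}]\le\mathrm{E}[N_w\mathbb{1}_{\{M>k\}}]$; writing $\mathbb{1}_{\{M\le k\}}=1-\mathbb{1}_{\{M>k\}}$ and again invoking $\mathrm{E}[N_v]=\mathrm{E}[N_w]$ reverses the inequality into the desired $\mathrm{E}[N_v\mathbb{1}_{\{M\le k\}}]\ge\mathrm{E}[N_w\mathbb{1}_{\{M\le k\}}]$.

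The one genuinely non-routine point is (c), and I expect the main obstacle to be recognizing that the two allocation terms defining $\mathcal{C}^{\mathrm{TVaR}}_\kappa$ recombine into a single expectation against a \emph{monotone} weight. Writing $q=\mathrm{VaR}_\kappa(M)$ and $c=\bigl(F_M(q)-\kappa\bigr)/p_M(q)$, I would first argue $c\in[0,1)$ from the defining property of the quantile, namely $F_M(q-1)<\kappa\le F_M(q)$, and then use the first displayed form of $\mathcal{C}^{\mathrm{TVaR}}_\kappa$ to write
\begin{equation*}
\mathcal{C}^{\mathrm{TVaR}}_\kappa(N_v;M)=\frac{1}{1-\kappa}\,\mathrm{E}\bigl[N_v\,h(M)\bigr],\qquad h(m)=\mathbb{1}_{\{m>q\}}+c\,\mathbb{1}_{\{m=q\}}.
\end{equation*}
Since $h$ equals $0$ below $q$, takes the value $c\in[0,1)$ at $q$, and equals $1$ above $q$, it is nondecreasing; the preliminary step then gives $\mathrm{E}[N_v\,h(M)]\le\mathrm{E}[N_w\,h(M)]$, and dividing by $1-\kappa>0$ yields (c). In this way the whole proposition reduces to the single supermodularity check for $x\,h(m)$, together with the bookkeeping that the marginals $N_v,N_w$ and the aggregate $M$ are shared across the two pairs.
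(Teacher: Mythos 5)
Your proof is correct. For parts (a) and (b) it coincides with the paper's argument: the paper invokes Theorem~3.8.2 of M\"uller and Stoyan for precisely the fact you prove by hand, namely that $(N_v,M)\preceq_{sm}(N_w,M)$ gives $\mathrm{E}[N_v\,h(M)]\leq\mathrm{E}[N_w\,h(M)]$ for nondecreasing $h$, applied to $h(m)=m$ and to a tail indicator, followed by the same complementation using $\mathrm{E}[N_v]=\mathrm{E}[N_w]$. The genuine divergence is in part (c). The paper routes it through an auxiliary result, Lemma~\ref{th:LemmaContrib}, whose input is only the family of tail inequalities $\mathrm{E}[N_v\mathbb{1}_{\{M\geq k\}}]\leq\mathrm{E}[N_w\mathbb{1}_{\{M\geq k\}}]$, $k\in\mathbb{N}$, and whose proof runs a case analysis on the sign of $\mathrm{E}[N_w\mathbb{1}_{\{M=q_\kappa\}}]-\mathrm{E}[N_v\mathbb{1}_{\{M=q_\kappa\}}]$, bounding the weight $\xi_\kappa\in[0,1]$ by $1$ in one case and $0$ in the other so as to reduce to tail inequalities at $q_\kappa$ and $q_\kappa+1$ respectively. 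You instead absorb the two allocation terms into a single expectation $\mathcal{C}^{\mathrm{TVaR}}_{\kappa}(N_v;M)=\tfrac{1}{1-\kappa}\mathrm{E}[N_v\,h(M)]$ with $h(m)=\mathbb{1}_{\{m>q\}}+c\,\mathbb{1}_{\{m=q\}}$, check $c\in[0,1)$ from $F_M(q-1)<\kappa\leq F_M(q)$, and apply your supermodular test-function device once more; note that your $h$ equals $(1-c)\mathbb{1}_{\{m\geq q+1\}}+c\,\mathbb{1}_{\{m\geq q\}}$, a convex combination of tail indicators, which is exactly why the paper's case analysis becomes unnecessary. What each buys: your version is more self-contained (no external citation, no separate lemma) and handles all three parts with one mechanism; the paper's lemma, by contrast, is stated under the weaker hypothesis of ordered tail expectations alone, so it stands as a reusable result in settings where full supermodular ordering is not available---though your convex-combination identity shows the same weakening is implicit in your argument as well.
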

	
	\begin{proof}
		The statements result from the supermodular ordering between $(N_v,M)$ and $(N_w,M)$ according to Definition~\ref{def:RiskContributing}. Theorem~3.8.2 of \cite{muller2002comparison} grants the validity of part (a). It also states that the inequality $\mathrm{E}[N_v\mathbb{1}_{\{M\geq k\}}]\leq\mathrm{E}[N_w\mathbb{1}_{\{M\geq k\}}]$ holds for every $k\in\mathbb{N}$, since $x\mapsto\mathbb{1}_{\{x\geq k\}}$ is nondecreasing. Part (b) then ensues 
		given $\mathrm{E}[N_v]=\mathrm{E}[N_w]$, and part (c) follows from Lemma~\ref{th:LemmaContrib} (see Appendix~\ref{sect:LemmaContrib}). 
  \end{proof}

		Based on the above discussion on centrality, $\mathrm{Cov}(N_v,M)$, $\mathrm{E}[N_v\mathbb{1}_{\{M\leq k\}}]$, and $\mathcal{C}^{\mathrm{TVaR}}_{\kappa}(N_v,M)$ all act as centrality indices, as they induce an ordering of vertices within a tree, coherent with our understanding of centrality 
         given Theorem~\ref{th:OrderingAlloc}. In the seminal paper \cite{freeman1979centrality}, the author, in a effort to dispel the reigning confusion around centrality, divided the notion into three concepts: degree, betweenness and closeness, and introduced the now-classical eponymous centrality indices. Freeman's closeness, in our tree-based context where only one path exists for each pair of vertices, is given by 
        \begin{equation}
        c_v^{\mathrm{clo}} = \sum_{j\in\mathcal{V}} |\mathrm{path}(v,j)|, \quad v\in\mathcal{V}.
        \label{eq:Freemancloseness}
        \end{equation}
       Having a \textit{centrality} index increase when a vertex is farther in terms of path distances is counter-intuitive, as pointed out in \cite{borgatti2006graph}. Authors have suggested functional transformations for $|\mathrm{path}(v,j)|$ in (\ref{eq:Freemancloseness}) to remedy to this issue, such as the multiplicative inverse, yielding the harmonic closeness discussed in \cite{rochat2009closeness}, or an exponential transformation. The latter was proposed in \cite{burt1991STRUCTURE}, p.~66, and converts Freeman's closeness to
        \begin{equation}
            c_v^{\text{clo,exp.transf}} = \sum_{j\in\mathcal{V}}\alpha^{|\mathrm{path}(v,j)|},\quad v\in\mathcal{V},
            \label{eq:Burtcloseness}
        \end{equation}
        for a chosen parameter $\alpha\in(0,1)$. In our context, the centrality index $\mathrm{Cov}(N_v,M)$, from Proposition~\hyperref[th:EffectsofSupermodularity]{\ref{th:EffectsofSupermodularity}(a)}, is actually the index in (\ref{eq:Burtcloseness}), up to a multiplicative constant, with $\alpha$ being the dependence parameter for the edges. Indeed, provided $\mathrm{Cov}(N_v,N_j)=\lambda\prod_{e\in\mathrm{path}(v,j)} \alpha_e$ given Theorem~5 of \cite{cote2025tree}, we have
        \begin{equation*}
        \mathrm{Cov}(N_v,M) = \sum_{j\in\mathcal{V}} \mathrm{Cov}(N_v,N_j) = \lambda\sum_{j\in\mathcal{V}} \prod_{e\in\mathrm{path}(v,j)} \alpha_e = \lambda\sum_{j\in\mathcal{V}} \alpha^{|\mathrm{path}(v,j)|} = \lambda c_v^{\text{clo,exp.transf}},
        \end{equation*}
        where the fourth equality comes from the fact that $\alpha_e=\alpha$ for every $e\in\mathcal{E}$ under the assumptions of Theorem~\ref{th:OrderingAlloc}.
        The quantity $\mathrm{Cov}(N_v,M)$ is thus a closeness index in terms of Freeman's categorization. 

\bigskip

Note that the ordering of vertices induced by index $\mathrm{Cov}(N_v,M)$ may not be the  same as that of indices $\mathrm{E}[N_v\mathbb{1}_{M\leq k}]$, as in Proposition~\hyperref[th:EffectsofSupermodularity]{\ref{th:EffectsofSupermodularity}(b)}, and $\mathcal{C}_{\kappa}^{\mathrm{TVaR}}(N_v,M)$, as in
Proposition~\hyperref[th:EffectsofSupermodularity]{\ref{th:EffectsofSupermodularity}(c)}, since the condition in (\ref{eq:Criterion}) is sometimes not satisfied for two vertices $v,w\in\mathcal{V}$. 
Because of the intricacy of their respective analytical expressions, we cannot proceed as for $\mathrm{Cov}(N_v,M)$ and connect
$\mathrm{E}[N_v\mathbb{1}_{M\leq k}]$ and $\mathcal{C}_{\kappa}^{\mathrm{TVaR}}(N_v;M)$ to other existing centrality indices.   
Nevertheless, the typology proposed in \cite{borgatti2006graph} suggests they be interpreted as closeness indices likewise. Indeed, as seen in \ref{eq:expealloc}, expected allocations indirectly takes into account the length of paths through the product of dependence parameters $\alpha$ in $\eta_v^{\mathcal{T}_v}(t\boldsymbol{1}_d)$, $t\in[-1,1]$, $v\in\mathcal{V}$. Moreover, the rooting in $v$ suggests the paths considered by expected allocation all emanate from vertex $v$ rather than flowing through it. We therefore deduce that the indices $\mathrm{E}[N_v\mathbb{1}_{\{M\leq k\}}]$ and $\mathcal{C}^{\mathrm{TVaR}_{\kappa}}(N_v;M)$ have a "length" Walk Property of and a "radial" Walk Position. 
Their cross-classification thus falls within the same category as Freeman's closeness and other closeness indices (see Table~1 of \cite{borgatti2006graph}). 

\bigskip

One could want to verify if the three centrality indices put forth in Proposition~\ref{th:EffectsofSupermodularity} agree with axiomatic systems on centrality indices, such as \cite{sabidussi1966centrality}'s and \cite{ruhnau2000eigenvector}'s.  
However, these approaches were deemed reductive, as discussed earlier, by the authors of \cite{freeman1979centrality}, \cite{friedkin1991theoretical} and \cite{borgatti2005centrality}: one needs to take into account the underlying dynamics occurring on the graph. There are no universal centrality indices. As a matter of fact, axiomatic systems seem to consistently overlook closeness as a concept of centrality; one may refer to Table~3.2 in \cite{schoch2015positional} and the discussion thereabout. One should rather ask if the centrality indices are suitable to the context of the family $\mathbb{MPMRF}$. According to \cite{borgatti2005centrality}, the adequacy of a centrality index entails the flow of traffic -- events, in our context -- on the graph. The propagation dynamics unveiled by the stochastic representation in Theorem~\ref{th:StoDynamics} gives an account of this flow of events. Events may propagate to multiple, but not necessarily all, neighbours at a time. Events follow paths, not walks, as they cannot propagate through the same edge back and forth. $\mathbb{MPMRF}$ thus induces a \textit{serial duplication, paths} flow of traffic. The conclusions of \cite{borgatti2005centrality}, summarized in Table~2 therein, suggest that closeness indices be preferred for contexts involving this type of flow on a tree. (Note that paths, geodesics and trails are the same on a tree.) 
Hence, from this reasoning, centrality indices from Proposition~\ref{th:EffectsofSupermodularity} would agree with the stochastic dynamics of $\mathbb{MPMRF}$.

\section{Tree-shape partial order}
\label{sect:SHA}
 Toward answering (\ref{eq:Q2}), forthcoming Theorem~\ref{th:ConvexOrderMShape} exhibits how convex ordering may be established for $M$'s defined on underlying trees with different shapes. After, we devise a partial order induced by this result, allowing to compare trees on the basis of their shapes. Theorem~\ref{th:ConvexOrderMShape} is therefore the cornerstone of this section. 	
\begin{deff}[Convex order]
	Two random variables $X$ and $X^{\prime}$ are said to be ordered according to the convex order, denoted by $X\preceq_{cx}X^{\prime}$, if 
	$ \mathrm{E}[\varphi(X)] \leq \mathrm{E}[\varphi(X^{\prime})]$ 
	for every convex function $\varphi$, given the expectations exist. 
\end{deff}        
Let us also recall the notion of \textit{subtree} of a tree. Precisely, a subtree $\tau$ of a tree $\mathcal{T} = (\mathcal{V},\mathcal{E})$ is defined by the pair $(\mathcal{V}^{\tau},\mathcal{E}^{\tau})$, where $\mathcal{V}^{\tau}$ is a subset of vertices, $\mathcal{V}^{\tau}\subseteq\mathcal{V}$, and $\mathcal{E}^{\tau}$ is a subset of edges, $\mathcal{E}^{\tau}\subseteq\mathcal{E}$, such that $\tau$ is a tree itself.

		\begin{theorem}
			\label{th:ConvexOrderMShape}
            Consider two trees $\mathcal{T}=(\mathcal{V},\mathcal{E})$ and $\mathcal{T}^{\prime}=(\mathcal{V},\mathcal{E}^{\prime})$
            such that $\mathcal{E}\backslash\{(u,v)\} = \mathcal{E}^{\prime}\backslash\{(u,w)\}$, that is the only difference between $\mathcal{T}$ and $\mathcal{T}^{\prime}$ is that $u$ is connected to $w$ in  $\mathcal{T}^{\prime}$ rather than to $v$ as in $\mathcal{T}$. Suppose $\boldsymbol{N}\sim$ MPMRF$(\lambda,\boldsymbol{\alpha},\mathcal{T})$ and $\boldsymbol{N}^{\prime}\sim$ MPMRF$(\lambda,\boldsymbol{\alpha}^{\prime},\mathcal{T}^{\prime})$, with $\lambda>0$ and $\boldsymbol{\alpha},\boldsymbol{\alpha}^{\prime}\in[0,1]^{d-1}$ such that $\alpha_e = \alpha_e^{\prime}$ for every $e \in \mathcal{E}\backslash\{(u,v)\}$ and $\alpha_{(u,v)} = \alpha_{(u,w)}^{\prime}$. Let $M$ and $M^{\prime}$ be sums of the respective components of $\boldsymbol{N}$ and $\boldsymbol{N}^{\prime}$.
 Consider the subtree $\tau^{\dagger}=(\mathcal{V^{\dagger}}=\{u\}\cup\mathrm{dsc}(u), \,\mathcal{E}^{\dagger}=\{(i,j)\in\mathcal{E}:i,j\in\mathcal{V}^{\dagger})$, with descendants of $u$ taken according to rooting $\mathcal{T}_v$ or, equivalently, $\mathcal{T}^{\prime}_w$. Pruning $\mathcal{T}$ or $\mathcal{T}^{\prime}$ of $\tau^{\dagger}$ yields the same residual subtree, which we denote ${\tau}^*$, that is, $\tau^*= (\mathcal{V^*}=\mathcal{V}\backslash\mathcal{V}^{\dagger},\,\mathcal{E}^*=\mathcal{E}\backslash(\{(u,v)\}\cup\mathcal{E}^{\dagger})=\mathcal{E}^{\prime}\backslash(\{(u,w)\}\cup\mathcal{E}^{\dagger}))$. 
   We define the two discrete random variables $H_v^{\tau^*_v}$ and $H_w^{\tau^*_w}$ as in Lemma~\ref{th:GandH} with pgfs given by $ \mathcal{P}_{H_v^{\tau^*_v}}(t) = \eta_{v}^{\tau^*_v}(t\vecun{v})$ and $ \mathcal{P}_{H_w^{\tau^*_w}}(t) = \eta_{w}^{\tau^*_w}(t\vecun{w})$ as provided in \eqref{eq:jointpgf-h}, $v,w\in\mathcal{V}$, $t\in[-1,1]$.
 If $H_v^{\tau^*_v}\preceq_{st} H_w^{\tau^*_w}$, then
			$ M \preceq_{cx} M^{\prime}$.
		\end{theorem}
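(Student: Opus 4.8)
Write $M^{\dagger}=\sum_{x\in\mathcal V^{\dagger}}N_x$ and $M^{*}=\sum_{x\in\mathcal V^{*}}N_x$, and similarly $M'^{\dagger},M'^{*}$ for $\boldsymbol N'$, so that $M=M^{\dagger}+M^{*}$ and $M'=M'^{\dagger}+M'^{*}$. The plan is to reduce the convex comparison to a bivariate supermodular comparison of the pairs $(M^{\dagger},M^{*})$ and $(M'^{\dagger},M'^{*})$, and then to feed Theorem~\ref{th:OrderingAlloc}, applied to the residual subtree $\tau^{*}$, into an Abel summation. Since $(x,y)\mapsto\varphi(x+y)$ is supermodular whenever $\varphi$ is convex, the ordering $(M^{\dagger},M^{*})\preceq_{sm}(M'^{\dagger},M'^{*})$ immediately yields $M\preceq_{cx}M'$. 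As $\preceq_{sm}$ coincides with the concordance order in the bivariate case, the target becomes the joint-cdf inequality $\Pr(M^{\dagger}\le a,\,M^{*}\le b)\le\Pr(M'^{\dagger}\le a,\,M'^{*}\le b)$ for all $a,b$.

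First I would settle the marginal bookkeeping. Since $\tau^{\dagger}$ is attached to the residual tree through the single edge $(u,v)$ in $\mathcal T$ and $(u,w)$ in $\mathcal T'$, with common parameter $\alpha_{(u,v)}=\alpha'_{(u,w)}$, the vertex $v$ (resp.\ $w$) separates $\mathcal V^{\dagger}$ from $\mathcal V^{*}\setminus\{v\}$; the global Markov property then gives that, conditionally on the count at the attachment vertex, the $\tau^{\dagger}$-block and the $\tau^{*}$-block are independent, so $M^{\dagger}\perp\!\!\!\perp M^{*}$ given that count. Marginalising out $\tau^{\dagger}$, the law of $(N_x,x\in\mathcal V^{*})$ is that of a standalone MPMRF $\tilde{\boldsymbol N}$ on $\tau^{*}$ with the inherited parameters, regardless of whether the attachment is at $v$ or $w$; hence $M^{*}\stackrel{d}{=}M'^{*}\stackrel{d}{=}\tilde M:=\sum_{x\in\mathcal V^{*}}\tilde N_x$. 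Likewise the conditional law of $M^{\dagger}$ given that the attachment count equals $n$ depends only on $n$, on $\alpha_{(u,v)}$ and on $\tau^{\dagger}$ itself, not on the identity of the attachment vertex; call it $\Lambda^{\dagger}_n$, the same in both trees. Thus $M^{\dagger}\stackrel{d}{=}M'^{\dagger}$, the two pairs share their marginals (so $\mathrm E[M]=\mathrm E[M']$), and the attachment count is $N_v$ in $\mathcal T$ and $N_w$ in $\mathcal T'$, both Poisson$(\lambda)$ with common pmf $\pi_n$.

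Conditioning on this attachment count and using the conditional independence, I would expand
\[\Pr(M^{\dagger}\le a,\,M^{*}\le b)=\sum_{n\ge0}\pi_n\,\Lambda^{\dagger}_n(a)\,\mathcal H_n(b),\qquad \mathcal H_n(b)=\Pr\!\big(\tilde M\le b\mid\tilde N_v=n\big),\]
with the analogous expression for $\mathcal T'$ using $\mathcal H'_n(b)=\Pr(\tilde M\le b\mid\tilde N_w=n)$; the decisive point is that the factor $\Lambda^{\dagger}_n(a)$ is identical across the two trees. Two monotonicity facts then drive the argument. The map $n\mapsto\Lambda^{\dagger}_n(a)$ is nonincreasing, because more events at the attachment vertex stochastically inflate $M^{\dagger}$. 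And Theorem~\ref{th:OrderingAlloc}, applied to $\tau^{*}$ (whose hypothesis is exactly $H_v^{\tau^*_v}\preceq_{st}H_w^{\tau^*_w}$), gives $(\tilde N_v,\tilde M)\preceq_{sm}(\tilde N_w,\tilde M)$, i.e.\ the concordance inequality $\sum_{m=0}^{n}\pi_m\mathcal H_m(b)\le\sum_{m=0}^{n}\pi_m\mathcal H'_m(b)$ for every $n,b$. Setting $P_n(b)=\sum_{m=0}^n\pi_m\,[\mathcal H_m(b)-\mathcal H'_m(b)]$, this reads $P_n(b)\le0$ for all $n$, while equality of marginals gives $P_\infty(b)=0$. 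Abel summation closes the proof: since $\Lambda^{\dagger}_n(a)$ is nonincreasing,
\[\sum_{n\ge0}\pi_n\Lambda^{\dagger}_n(a)\,[\mathcal H_n(b)-\mathcal H'_n(b)]=\sum_{n\ge0}P_n(b)\,\big(\Lambda^{\dagger}_n(a)-\Lambda^{\dagger}_{n+1}(a)\big)\le0,\]
the boundary term vanishing as $P_n(b)\to0$, which is precisely the desired joint-cdf inequality.

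I expect the main obstacle to be the structural bookkeeping of the second paragraph rather than any single estimate: one must justify carefully, from the thinning construction of Theorem~\ref{th:StoDynamics} and the global Markov property, that (i) the residual block is a genuine standalone MPMRF on $\tau^{*}$ whose conditional law given the attachment count coincides with $\tilde{\boldsymbol N}$ conditioned on $\tilde N_v$ (resp.\ $\tilde N_w$), and (ii) the conditional law $\Lambda^{\dagger}_n$ of $M^{\dagger}$ is independent of the attachment vertex and stochastically increasing in $n$. The root-invariance property (Theorem~2 of \cite{cote2025tree}) is what makes $\tau^{*}$ rooted at $v$ and at $w$ describe the same distribution, and hence legitimises invoking Theorem~\ref{th:OrderingAlloc} on $\tau^{*}$ with the two distinct roots $v$ and $w$ (a routine boundary extension covers parameters on $\{0,1\}$). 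Once these identifications are secured, the nonincreasingness of $\Lambda^{\dagger}_n$ and the concordance inequality on $\tau^{*}$ combine mechanically through the Abel summation above.
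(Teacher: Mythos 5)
Your proof is correct, and it takes a genuinely different route from the paper's in its second half. Both arguments share the same opening move: apply Theorem~\ref{th:OrderingAlloc} to the standalone MPMRF on $\tau^*$ (legitimized by Corollary~1 and the root-invariance result of \cite{cote2025tree}) to get the bivariate supermodular/concordance comparison $(\tilde N_v,\tilde M)\preceq_{sm}(\tilde N_w,\tilde M)$ at the two attachment vertices. From there the paper goes \emph{up} in dimension: it pushes $(N_v,M^*)\preceq_{sm}(N_w^{\prime},M^{*\prime})$ through the thinning along the connecting edge (Theorem~9.A.14 of \cite{shaked2007}), adds the innovation $L_u$ (Corollary~9.A.10), then iteratively appends every component of $\tau^{\dagger}$ via stochastically monotone mixing (Theorem~9.A.15, as in the proof of Theorem~6 of \cite{cote2025tree}), and finally collapses the resulting $(|\mathcal{V}^{\dagger}|+1)$-dimensional supermodular ordering to $M\preceq_{cx}M^{\prime}$ by Theorem~3.1 of \cite{muller1997stop}. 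You instead collapse $\tau^{\dagger}$ to its sum $M^{\dagger}$ from the start and stay bivariate throughout: conditional independence of the two blocks given the attachment count (global Markov property, with $v$, resp.\ $w$, separating $\mathcal{V}^{\dagger}$ from $\mathcal{V}^*\setminus\{v\}$, resp.\ $\mathcal{V}^*\setminus\{w\}$), vertex-independence and stochastic monotonicity in $n$ of the kernel $\Lambda^{\dagger}_n$, and a summation-by-parts argument converting $P_n(b)\le 0$ into the joint-cdf inequality for $(M^{\dagger},M^*)$ versus $(M^{\prime\dagger},M^{*\prime})$; equal marginals plus the bivariate equivalence of concordance and supermodular orders (which the paper itself records right after defining $\preceq_{sm}$) then give the supermodular ordering, and supermodularity of $(x,y)\mapsto\varphi(x+y)$ for convex $\varphi$ finishes. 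In effect, your Abel summation is an elementary, self-contained re-derivation of the order-preservation results the paper simply cites; what it buys is a lower-dimensional and more transparent argument showing exactly where monotonicity enters, while the paper's route yields a stronger intermediate statement (a full supermodular ordering componentwise over $\tau^{\dagger}$, potentially of independent use) by leaning on off-the-shelf theorems. The two obligations you flag are genuine but not gaps: they are precisely parallel to the facts the paper secures in relations (\ref{eq:SupermodularN-distr2})--(\ref{eq:SupermodularN-Cond2-StoDomN}), and your sketches close them --- monotonicity of $\Lambda^{\dagger}_n$ follows by propagating binomial-thinning monotonicity down $\tau^{\dagger}$ (or, more slickly, from the cluster representation of Lemma~\ref{th:GandH}: given an attachment count $n$, $M^{\dagger}$ is a sum of $n$ i.i.d.\ propagation clusters plus an independent remainder), and both proofs share the same innocuous boundary issue of invoking Theorem~\ref{th:OrderingAlloc} with parameters allowed in $[0,1]$ rather than $(0,1)$.
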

		\begin{proof}
	The proof is provided in Appendix \ref{sect:proofconvexMshape}.
		\end{proof}

		Figure~\ref{fig:ExTheo9} provides an example of two trees $\mathcal{T}$ and $\mathcal{T}^{\prime}$ such that $\mathcal{E}\backslash\{(u,v)\} = \mathcal{E}^{\prime}\backslash\{(u,w)\}$, as considered by Theorem~\ref{th:ConvexOrderMShape}. For each, we colored the subtree $\tau^{\dagger}$ in yellow (light shade) and the subtree $\tau^{*}$ in blue (dark shade).

		\begin{figure}[H]
			\centering
			\begin{tikzpicture}[every node/.style={text=Black, circle, draw = Blue!50!black, inner sep=0.1mm,   minimum size=2.5mm, fill = White, very thick}, node distance = 1mm, scale=0.1, thick]
				\node (1a) {};
				\node [above left=of 1a] (2a) {};
				\node [below left=of 1a] (3a) {};
				\node [right =of 1a] (4a) {\tiny $v$};
				\node [right =of 4a] (5a) {};
				\node [right =of 5a] (6a) {\tiny $w$};
				\node [above left =of 6a] (7a) {};
				\node [above right =of 6a] (8a) {};
				\node [right =of 6a] (9a) {};
				\node [above =of 6a] (95a) {};
				\node [draw =Goldenrod!85!black, below=of 4a] (10a) {\tiny $u$};
				\node [draw =Goldenrod!85!black, below left=of 10a] (11a) {};
				\node [draw =Goldenrod!85!black, below right=of 10a] (12a) {};
				\node [draw =Goldenrod!85!black, right=of 12a] (13a) {};
				
				\draw (1a) -- (2a);
				\draw (1a) -- (3a);
				\draw (1a) -- (4a);
				\draw (4a) -- (5a);            
				\draw (5a) -- (6a);       
				\draw (6a) -- (7a);
				\draw (6a) -- (8a);
				\draw (6a) -- (9a);
				\draw (6a) -- (95a);
				\draw (4a) -- (10a);
				\draw (10a) -- (11a);
				\draw (10a) -- (12a);
				\draw (12a) -- (13a);
				
				\node [right = 10mm of 9a](1b) {};
				\node [above left=of 1b] (2b) {};
				\node [below left=of 1b] (3b) {};
				\node [right =of 1b] (4b) {\tiny $v$};
				\node [right =of 4b] (5b) {};
				\node [right =of 5b] (6b) {\tiny $w$};
				\node [above left =of 6b] (7b) {};
				\node [above right =of 6b] (8b) {};
				\node [right =of 6b] (9b) {};
				\node [above =of 6b] (95b) {};
				\node [draw =Goldenrod!85!black, below=of 6b] (10b) {\tiny $u$};
				\node [draw =Goldenrod!85!black, below left=of 10b] (11b) {};
				\node [draw =Goldenrod!85!black, below right=of 10b] (12b) {};
				\node [draw =Goldenrod!85!black, right=of 12b] (13b) {};
				
				\draw (1b) -- (2b);
				\draw (1b) -- (3b);
				\draw (1b) -- (4b);
				\draw (4b) -- (5b);            
				\draw (5b) -- (6b);       
				\draw (6b) -- (7b);
				\draw (6b) -- (8b);
				\draw (6b) -- (9b);
				\draw (6b) -- (95b);
				\draw (6b) -- (10b);
				\draw (10b) -- (11b);
				\draw (10b) -- (12b);
				\draw (12b) -- (13b);     
				
				\node[draw=none, below = 8mm of 5a](t1){$\mathcal{T}^{\mathcolor{White}{\prime}}$};
				\node[draw=none, below= 8mm of 5b](t2){$\mathcal{T}^{\prime}$};
			\end{tikzpicture}
			\caption{Two trees differing only by the anchoring position of vertex $u$}
			\label{fig:ExTheo9}
		\end{figure}

		Let $\preceq_{sha}$ denote the tree-shape partial order on the set $\Omega_{\mathcal{T}}$ of all trees defined such that $\mathcal{T}\preceq_{sha}\mathcal{T}^{\prime}$ if $M\preceq_{cx} M^{\prime}$ according to Theorem \ref{th:ConvexOrderMShape}, where the respective distributions of $M$ and $M^{\prime}$ are defined on $\mathcal{T}$ and $\mathcal{T}^{\prime}$, and have the same vector of dependence parameters $\boldsymbol{\alpha}$ such that $\alpha_e = \alpha$, for every $e\in\mathcal{E}\cup\mathcal{E}^{\prime}$, $\alpha\in(0,1)$. Recall that a partial order consists of a homogeneous binary relation that is reflexive, antisymmetric and transitive. The relation $\preceq_{sha}$ directly inherits reflexivity and transitivity from $\preceq_{cx}$; for antisymmetry, one needs to suppose that no two tree shapes yield the same distribution of $M$. We conjecture this is the case, as even the cospectral, so-called \textit{topological twin} trees presented in \cite{mckay1977spectral} render different distributions of $M$. Else, one could work with the quotient space given by the equivalence relation on trees yielding the same distribution of $M$, which we would also denote by $\Omega_{\mathcal{T}}$ for convenience. Therefore, the order $\preceq_{sha}$ satisfying the three properties of partial orders on $\Omega_{\mathcal{T}}$ makes $(\Omega_{\mathcal{T}}, \preceq_{sha})$ a partially ordered set (poset); one may consult Chapter~2 of \cite{nguyen2018first} for an introduction to posets. 
		
		\bigskip
		
		The poset $(\Omega_{\mathcal{T}}, \preceq_{sha})$ offers a framework to compare, based on the shape of their dependence structure, MRFs of the proposed family that are on the same basis regarding their magnitude and the strength of their dependence. Theorem~\ref{th:ConvexOrderMShape} induces $\preceq_{sha}$. Indeed, it is this result that enables the ordering of trees according to their shape. Leaning on the transitivity of partial orders, the following corollary exhibits that trees differing by more than one edge may be compared by iterative applications of Theorem~\ref{th:ConvexOrderMShape}.

		\begin{cor}
			\label{th:Star-to-Series}
			Let $\{\mathcal{T}^{<k>},\, k \in \{1,...,d-2\} \}\subseteq \Omega_{\mathcal{T}}$ be a sequence of trees resulting from the iterative deconstruction of a star to a series tree, as depicted in Figure \ref{fig:Star-to-Series}, and assume $d\geq4$. Hence, $\mathcal{T}^{<1>}$ is a $d$-vertex star tree and $\mathcal{T}^{<d-2>}$ is a $d$-vertex series tree. We have the following ordering of trees:
			\begin{equation*}
				\mathcal{T}^{<d-2>}\preceq_{sha} \mathcal{T}^{<d-3>} \preceq_{sha} \cdots \preceq_{sha} \mathcal{T}^{<2>} \preceq_{sha} \mathcal{T}^{<1>}.
			\end{equation*}
		\end{cor}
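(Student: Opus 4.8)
The plan is to prove Corollary~\ref{th:Star-to-Series} by applying Theorem~\ref{th:ConvexOrderMShape} successively along the chain of trees, so the essential work reduces to verifying, at each step, the hypothesis $H_v^{\tau^*_v}\preceq_{st} H_w^{\tau^*_w}$ of that theorem. First I would fix the meaning of the \emph{iterative deconstruction of a star to a series tree} shown in Figure~\ref{fig:Star-to-Series}: each consecutive pair $\mathcal{T}^{<k+1>}$ and $\mathcal{T}^{<k>}$ differs by a single edge in exactly the way Theorem~\ref{th:ConvexOrderMShape} requires, namely a pendant vertex $u$ that is detached from one anchor and reattached to another, leaving the remaining edge set untouched. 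I would make this precise by describing the deconstruction as follows: starting from the star $\mathcal{T}^{<1>}$, whose center we call the hub, at each step we take a leaf still attached directly to the hub and re-anchor it to the current endpoint of the growing path, so that after $d-2$ steps all vertices lie on a single path, yielding the series tree $\mathcal{T}^{<d-2>}$. Each such single-edge move is exactly the operation governed by Theorem~\ref{th:ConvexOrderMShape}.

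Next, for a single deconstruction step from $\mathcal{T}^{<k>}$ to $\mathcal{T}^{<k+1>}$, I would identify the vertices $u$, $v$, $w$ of Theorem~\ref{th:ConvexOrderMShape}: $u$ is the moving leaf, $v$ is its anchor in the ``more star-like'' tree (the hub, or a vertex closer to the hub), and $w$ is its anchor in the ``more series-like'' tree (a vertex farther along the path). Because $u$ is a leaf, its descendant subtree $\tau^{\dagger}$ consists of $u$ alone, and the residual subtree $\tau^*$ is common to both trees in the pair. The crux is then to show $H_v^{\tau^*_v}\preceq_{st} H_w^{\tau^*_w}$, which by the interpretation given after Lemma~\ref{th:GandH} compares the total number of propagated events originating from $v$ versus from $w$ within $\tau^*$. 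Intuitively, in the deconstruction the vertex $v$ sits in a more central position in $\tau^*$ than $w$ (it is closer to the bulk of the remaining vertices), so by the centrality reasoning of Theorem~\ref{th:OrderingAlloc} we expect $v$ to reach more of $\tau^*$ and hence $H_v^{\tau^*_v}\preceq_{st} H_w^{\tau^*_w}$ to fail in the naive direction --- so the orientation of the inequality must be checked carefully against how Figure~\ref{fig:Star-to-Series} labels the endpoints. I would verify the stochastic dominance directly from the recursive pgf~\eqref{eq:jointpgf-h}, exploiting that $\eta^{\tau^*_v}_v$ and $\eta^{\tau^*_w}_w$ are built from the same parameter $\alpha$ on the same residual tree $\tau^*$, using the recursive structure to reduce the comparison to a manageable inequality rather than expanding the pgfs fully.

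Having established $M^{<k+1>}\preceq_{cx} M^{<k>}$ for each step --- equivalently $\mathcal{T}^{<k+1>}\preceq_{sha}\mathcal{T}^{<k>}$ --- I would assemble the full chain by invoking transitivity of $\preceq_{sha}$, which the paper has already noted $\preceq_{sha}$ inherits from $\preceq_{cx}$. Chaining the $d-3$ single-step comparisons for $k\in\{1,\ldots,d-3\}$ then yields
\begin{equation*}
\mathcal{T}^{<d-2>}\preceq_{sha}\mathcal{T}^{<d-3>}\preceq_{sha}\cdots\preceq_{sha}\mathcal{T}^{<2>}\preceq_{sha}\mathcal{T}^{<1>},
\end{equation*}
as claimed. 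The main obstacle I anticipate is the per-step verification of the stochastic-order hypothesis $H_v^{\tau^*_v}\preceq_{st} H_w^{\tau^*_w}$: one must confirm that at every stage of the deconstruction the two candidate anchor vertices $v$ and $w$ genuinely satisfy the required first-order dominance in $\tau^*$, and in the correct direction so that the convex order points the way stated. I expect this to follow cleanly once $\tau^*$ is understood as a path (or path-with-pendants) whose endpoints are being compared, since on such a subtree the relevant $H$ random variables admit a transparent recursive comparison via~\eqref{eq:jointpgf-h}; nonetheless, carefully specifying which vertex plays the role of $v$ and which of $w$ at each step, and checking the dominance holds uniformly in $\alpha\in(0,1)$, is where the real care is needed.
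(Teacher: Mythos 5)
Your overall scaffolding is the same as the paper's: compare consecutive trees in the chain, note that each pair differs by the re-anchoring of a single pendant vertex, apply Theorem~\ref{th:ConvexOrderMShape} to each pair, and chain the resulting relations by transitivity of $\preceq_{sha}$. What is missing is the actual content of each step: you never prove the hypothesis $H_v^{\tau^*_v}\preceq_{st}H_w^{\tau^*_w}$, you only state that you ``would verify'' it from the recursion (\ref{eq:jointpgf-h}) and that you ``expect this to follow cleanly,'' while simultaneously leaving the assignment of the roles $v$ and $w$ (hub versus current path endpoint) as something ``to be checked.'' That verification is not a routine formality: the paper's Example~\ref{ex:NumEx-MnotConvex} exhibits two trees differing by a single edge for which the analogous dominance fails in both directions, so the corollary stands or falls precisely on this check. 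A complete proof must (i) commit to the orientation --- on the residual spider $\tau^*$ (hub with pendant leaves and one attached path), the variable $H$ rooted at the current path endpoint is stochastically dominated by the one rooted at the hub, so in the notation of Theorem~\ref{th:ConvexOrderMShape} the path endpoint plays the role of $v$ and the hub plays $w$, which is what yields $M^{<k+1>}\preceq_{cx}M^{<k>}$ --- and (ii) actually establish that dominance.

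The paper does (ii) with a concrete device your plan lacks. It introduces $\psi(y,t,\alpha,\chi)=t(1-\alpha+\alpha y)^{\chi}$ in (\ref{eq:psi}) and observes, via the elementary inequality $X_1+X_2\prod_{j=1}^{k}I_j\preceq_{st}X_1+X_2$ for Bernoulli $I_j$, that any pair of random variables whose pgfs take the forms $\psi^{\{k\}}(t\,\mathcal{P}_{X_2}(t),t,\alpha,1)$ and $\psi^{\{k\}}(t,t,\alpha,1)\,\mathcal{P}_{X_2}(t)$ is stochastically ordered; this is (\ref{eq:psiXstoX}). It then computes from (\ref{eq:jointpgf-h}) that the two pgfs arising at step $k$ have exactly these two forms, with $\mathcal{P}_{X_2}(t)=(1-\alpha+\alpha t)^{m}$ and $m$ the number of leaves still attached to the hub: rooting at the path endpoint, the entire hub-plus-leaves block is thinned through the $k$ intermediate Bernoulli stages, whereas rooting at the hub it enters unthinned. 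That identification, repeated at every step because the recursive pattern of $\eta_v^{\mathcal{T}_v}$ reissues identically, is the entire proof. Without it, or an equivalent argument, your proposal reduces to restating the iteration-plus-transitivity scheme that the corollary's statement already suggests.
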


		\begin{proof}
			We process in an iterative manner, meaning we establish the order relation at each step as we go from the star to the series tree, changing one edge at a time, as in Figure~\ref{fig:Star-to-Series}.
			Without loss of generality, we work with the vertices labelled as in Figure~\ref{fig:Star-to-Series}.

			\begin{figure}[H]
				\centering
				\begin{tikzpicture}[every node/.style={text = black, draw=none, fill = none, text opacity = 1, rectangle, inner sep=0mm, outer sep= 2mm}, node distance = 5mm, scale=0.1, very thick]
					\node(a0){
						\begin{tikzpicture}[every node/.style={circle, draw = Blue!50!black, inner sep=0.2mm,   minimum size=2.5mm, fill = White, very thick, outer sep = 0.1mm}, node distance = 1.5mm, scale=0.1, thick]
							\node(1){\tiny 1};
							\node[right = of 1](2){\tiny 2};
							\node[above right = of 1](3){\tiny 3};
							\node[above = of 1](4){\tiny 4};
							\node[above left = of 1](5){\tiny 5};
							\node[left = of 1](6){\tiny 6};
							\node[below left = of 1](7){\tiny 7};
							\node[draw = White, below = of 1](8){...};
							\node[below right = of 1](9){\tiny $d$};
							
							\draw (1) --  (2);
							\draw (1) -- (3);
							\draw (1) -- (4);
							\draw (1) -- (5);
							\draw (1) -- (6);
							\draw (1) -- (7);
							\draw (1) -- (8);
							\draw (1) -- (9);
						\end{tikzpicture}
					};
					\node[below = 0mm of a0]{\footnotesize{Star ($\mathcal{T}^{<1>}$)}};
					\node[right= of a0](a1){
						\begin{tikzpicture}[every node/.style={circle, draw = Blue!50!black, inner sep=0.2mm,   minimum size=2.5mm, fill = White, very thick, outer sep = 0.1mm}, node distance = 1.5mm, scale=0.1, thick]
							\node(1){\tiny 1};
							\node[right = of 1](2){\tiny 2};
							\node[right = of 2](3){\tiny 3};
							\node[above = of 1](4){\tiny 4};
							\node[above left = of 1](5){\tiny 5};
							\node[left = of 1](6){\tiny 6};
							\node[below left = of 1](7){\tiny 7};
							\node[draw = White, below = of 1](8){...};
							\node[below right = of 1](9){\tiny $d$};
							
							\draw (1) --  (2);
							\draw (2) -- (3);
							\draw (1) -- (4);
							\draw (1) -- (5);
							\draw (1) -- (6);
							\draw (1) -- (7);
							\draw (1) -- (8);
							\draw (1) -- (9);
						\end{tikzpicture}
					};
					\node[below = 0mm of a1]{\footnotesize{$\mathcal{T}^{<2>}$}};
					\node[right= of a1](a2){
						\begin{tikzpicture}[every node/.style={circle, draw = Blue!50!black, inner sep=0.2mm,   minimum size=2.5mm, fill = White, very thick, outer sep = 0.1mm}, node distance = 1.5mm, scale=0.1, thick]
							\node(1){\tiny 1};
							\node[White, above = of 1](phantom){};
							\node[right = of 1](2){\tiny 2};
							\node[right = of 2](3){\tiny 3};
							\node[right = of 3](4){\tiny 4};
							\node[above left = of 1](5){\tiny 5};
							\node[left = of 1](6){\tiny 6};
							\node[below left = of 1](7){\tiny 7};
							\node[draw = White, below = of 1](8){...};
							\node[below right = of 1](9){\tiny $d$};
							
							\draw (1) --  (2);
							\draw (2) -- (3);
							\draw (3) -- (4);
							\draw (1) -- (5);
							\draw (1) -- (6);
							\draw (1) -- (7);
							\draw (1) -- (8);
							\draw (1) -- (9);
						\end{tikzpicture}
					};
					\node[below = 0mm of a2]{\footnotesize{$\mathcal{T}^{<3>}$}};
					\node[right= of a2](dots){\dots
					};
					\node[right= of dots](a3){
						\begin{tikzpicture}[every node/.style={circle, draw = Blue!50!black, inner sep=0.2mm,   minimum size=2.5mm, fill = White, very thick, outer sep = 0.1mm}, node distance = 1.5mm, scale=0.1, thick]
							\node(1){\tiny 1};
							\node[White, above = of 1](phantom){};
							\node[White, below = of 1](phantom2){};
							\node[right = of 1](2){\tiny 2};
							\node[draw = White, right = of 2](3){...};
							\node[rounded rectangle, right = of 3](4){\tiny $d$-2};
							\node[rounded rectangle, below left = of 1](8){\tiny $d$-1};
							\node[below right = of 1](9){\tiny $d$};
							
							\draw (1) --  (2);
							\draw (2) -- (3);
							\draw (3) -- (4);
							\draw (1) -- (8);
							\draw (1) -- (9);
						\end{tikzpicture}
					};
					\node[below = 0mm of a3]{\footnotesize{$\mathcal{T}^{<d-3>}$}};
					\node[right= of a3](a4){
						\begin{tikzpicture}[every node/.style={circle, draw = Blue!50!black, inner sep=0.2mm,   minimum size=2.5mm, fill = White, very thick, outer sep = 0.1mm}, node distance = 1.5mm, scale=0.1, thick]
							\node(1){\tiny 1};
							\node[White, above = of 1](phantom){};
							\node[White, below = of 1](phantom2){};
							\node[draw = White, right = of 1](3){...};
							\node[rounded rectangle, right = of 3](4){\tiny $d$-2};
							\node[rounded rectangle, right = of 4](8){\tiny $d$-1};
							\node[left = of 1](9){\tiny $d$};
							
							\draw (1) --  (2);
							\draw (2) -- (3);
							\draw (3) -- (4);
							\draw (4) -- (8);
							\draw (1) -- (9);
						\end{tikzpicture}
					};
					\node[below = 0.5mm of a4]{\footnotesize{Series tree ($\mathcal{T}^{<d-2>}$)}};
					\draw[very thick] (a0) -- (a1);	
					\draw[very thick] (a1) -- (a2);	
					\draw[very thick] (a2) -- (dots);	
					\draw[very thick] (dots) -- (a3);	
					\draw[very thick] (a3) -- (a4);	
				\end{tikzpicture}
				\caption{Iterative deconstruction of a star to a series tree.}
				\label{fig:Star-to-Series}
			\end{figure}
			
			We first look at $\mathcal{T}^{<1>}$ and $\mathcal{T}^{<2>}$. They differ only from the anchorage of vertex $3$, connected to $1$ in the first case and to $2$ in the second. In order to apply Theorem~\ref{th:ConvexOrderMShape}, we establish a stochastic dominance relation between $H_{1}^{\tau^*_1}$ and $H_{2}^{\tau^*_2}$, defined on subtree $\tau^*$ with vertices $\mathcal{V}^*=\mathcal{V}\backslash\{3\}$, in a context where all dependence parameters are equal to $\alpha$, $\alpha\in(0,1)$. 
            Beforehand, we define $\psi$ by 
	           \begin{equation}
    	       \psi(y, t,\alpha,\chi) = t(1 - \alpha + \alpha y)^{\chi},
	           \label{eq:psi}
    	       \end{equation}
	        with $\psi^{\{k\}}$ denoting the $k$th composition of $\psi$ on $y$, $k\in \mathbb{N}_1$, and $\psi^{\{0\}}(y, t,\alpha,\chi)=t$.
            Also, for Bernoulli random variables $\{I_j,\, j\in\mathbb{N}_1\}$ and random variables $X_1$ and $X_2$, note that $X_1+X_2\times\prod_{j=1}^{k}I_j\preceq_{st}X_1+X_2$ for any $k\in\mathbb{N}_1$. Therefore, for random variables $X_3$ and $X_4$ with respective pgfs given by $\mathcal{P}_{X_3}(t) = \psi^{\{k\}}(t\mathcal{P}_{X_2}(t),t,\alpha,1)
			$ and
			$\mathcal{P}_{X_4}(t) = \psi^{\{k\}}(t,t,\alpha,1)\times\mathcal{P}_{X_2}(t)$, $k\in\mathbb{N}_1$, we establish
			\begin{equation}
				X_3\preceq_{st} X_4 
				\label{eq:psiXstoX}
			\end{equation} 
			by identification of the pgfs. Given (\ref{eq:jointpgf-h}), we have 
				\begin{align}
     &\eta^{\tau^*_1}_1(t\vecun{1})  = t(1-\alpha + \alpha t)^{d-2} = \psi(t,t,\alpha,1) \times (1-\alpha + \alpha t)^{d-3}; 
					\label{eq:proofStar-to-Series-1}\\
     &\eta^{\tau^*_2}_2(t\vecun{2}) = t(1-\alpha + \alpha t(1-\alpha+\alpha t)^{d-3}) =  \psi(t(1-\alpha + \alpha t)^{d-3}, t, \alpha, 1),
					\label{eq:proofStar-to-Series-2}
				\end{align}
				for $t\in[-1,1]$. Therefore, as in (\ref{eq:psiXstoX}), it follows that $H_{2}^{\tau^*_2}\preceq_{st} H_{1}^{\tau^*_1}$, leading to $M^{<2>}\preceq_{cx}M^{<1>}$ by Theorem \ref{th:ConvexOrderMShape} allowing to conclude $\mathcal{T}^{<2>}\preceq_{sha}\mathcal{T}^{<1>}$. Shifting the focus to $\mathcal{T}^{<2>}$ and $\mathcal{T}^{<3>}$, we establish a stochastic dominance relation between $H_{1}^{\tau^{**}_1}$ and $H_{3}^{\tau^{**}_3}$, with subtree $\tau^{**}$ having vertices $\mathcal{V}^{**}=\mathcal{V}\backslash\{4\}$, in order to apply Theorem~\ref{th:ConvexOrderMShape}. Indeed, the two structures only differ by the anchoring position of vertex $4$. From (\ref{eq:jointpgf-h}), we obtain
				\begin{align}
&\eta^{\tau^{**}_1}_1(t\vecun{1})  = t(1-\alpha + \alpha t)^{d-4}(1-\alpha t(1-\alpha t)) = (1-\alpha + \alpha t)^{d-4}\psi^{\{2\}}(t,t,\alpha,1);
					\label{eq:proofStar-to-Series-3}\\
&\eta^{\tau^{**}_3}_3(t\vecun{3}) = t(1-\alpha + \alpha t( 1-\alpha t((1-\alpha + \alpha t)^{d-4})))  = \psi^{\{2\}}(t(1-\alpha + \alpha t)^{d-4},t,\alpha,1),
					\label{eq:proofStar-to-Series-4}
				\end{align}
				for $t\in[-1,1]$. Given (\ref{eq:psiXstoX}), we draw the same conclusions as above and establish $H_{3}^{\tau^{**}_3}\preceq_{st}H_{1}^{\tau^{**}_1}$, allowing to conclude that $M^{<3>}\preceq_{cx}M^{<2>}$ by Theorem \ref{th:ConvexOrderMShape}. Hence, $\mathcal{T}^{<3>}\preceq_{sha}\mathcal{T}^{<2>}$.
				We repeat these steps until the star has been entirely rearranged as a series tree. The comparing pattern of (\ref{eq:proofStar-to-Series-1}) against (\ref{eq:proofStar-to-Series-2}), and of (\ref{eq:proofStar-to-Series-3}) against (\ref{eq:proofStar-to-Series-4}), indeed reissues at each step, due to the recursiveness of $\eta_{v}^{\mathcal{T}_v}$. 
			\end{proof}

					In Appendix \ref{sect:ShapesBonanza}, we provide Hasse diagrams for posets $(\Omega_{\mathcal{T}}^{(d)}, \preceq_{sha})$, where $\Omega_{T}^{(d)}$ is the finite set of all trees comprising $d$ vertices, $d\in\{4,\ldots,9\}$. The diagrams were devised from multiple applications of Theorem~\ref{th:ConvexOrderMShape}, considering every combination of trees. Two trees may not be comparable if they either differ by the positions of more than one vertex or if the condition $H_v^{\tau^*_v}\preceq_{st} H_w^{\tau^*_w}$ is not satisfied. 
		Notice that the Hasse diagram is not upward plannar starting with $d=8$. This coincides with the first occurences of shapes not being comparable because the condition $H_v^{\tau^*_v}\preceq_{st} H_w^{\tau^*_w}$ is not met.  Also, $(\Omega_{\mathcal{T}}, \preceq_{sha})$ is not a lattice: some pair of trees have more than one supremum, and others more than one infimum, as observed in the Hasse diagram of $(\Omega_{\mathcal{T}}^{(9)},\preceq_{sha})$. The star acts as maximal element and the series tree as minimal element of the subset of $d$-vertex trees, $d\in\{4,\ldots,9\}$. 

			\bigskip
			
			The criterion $H_v^{\tau^*_v}\preceq_{st} H_w^{\tau^*_w}$ cannot be omitted  in Theorem~\ref{th:ConvexOrderMShape}: even if two trees differ by the anchoring position of only one vertex, they may not be comparable under $\preceq_{sha}$. This is illustrated in Example \ref{ex:NumEx-MnotConvex}.

			\begin{ex}\label{ex:NumEx-MnotConvex} 
				Consider $\mathcal{T}$ and $\mathcal{T}^{\prime}$, the two trees depicted in Figure \ref{fig:CounterEx}. They differ only by the anchorage of vertex $4$: $\mathcal{E}\backslash\{(2,4)\} = \mathcal{E}^{\prime}\backslash\{(3,4)\}$. Suppose $\lambda=1$ and $\alpha_e = 0.9$ for every $e \in \mathcal{E}\cup\mathcal{E}^{\prime}$. Subtree $\tau^*$ is obtained by pruning $4$ of either trees.  
				Theorem \ref{th:ConvexOrderMShape} cannot be applied as $H_{2}^{\tau^*_2}$ and $H_{3}^{\tau^*_3}$ are not comparable according to the stochastic dominance order. Indeed, one finds 
				$\mathrm{Pr}({H_{2}^{\tau^*_2}}\leq 1) = 0.01 > 0.001 = \mathrm{Pr}({H_{3}^{\tau^*_3}}\leq 1)$ and $\mathrm{Pr}({H_{2}^{\tau^*_2}}\leq2) = 0.0190 < 0.0199 = \mathrm{Pr}({H_{3}^{\tau^*_3}}\leq2)$.
				After computing the values of the pmfs of $M$ and $M^{\prime}$, we obtain
			$		\mathrm{TVaR}_{0.23}(M) = 15.4342 < 15.4355 = \mathrm{TVaR}_{0.23}(M^{\prime}) $
				and
			$		\mathrm{TVaR}_{0.97}(M) = 42.3553 > 42.2257 = \mathrm{TVaR}_{0.97}(M^{\prime})$,	
    which disrupt the possibility of convex ordering between $M$ and $M^{\prime}$. Therefore, $\mathcal{T}$ and $\mathcal{T}^{\prime}$ are not comparable under $\preceq_{sha}$.  	
    \end{ex}

     \begin{figure}[H]
    \centering
    \centering
					\begin{tikzpicture}[every node/.style={text=Black, circle, draw = Blue!50!black, inner sep=0.2mm,   minimum size=3mm, fill = White, very thick}, node distance = 1mm, scale=0.1, thick]
						\node (1a) {\tiny $1$};
						\node [below left=of 1a] (2a) {\tiny $2$};
						\node [below right=of 1a] (3a) {\tiny $3$};
						\node [draw =Goldenrod!85!black, below left =of 2a] (4a) {\tiny $4$};
						\node [below =of 2a] (5a) {\tiny $5$};
						\node [below =of 3a] (6a) {\tiny $6$};
						\node [below right =of 3a] (7a) {\tiny $7$};
						\node [below =of 5a] (10a) {\tiny $10$};
						\node [right =of 10a] (11a) {\tiny $11$};
						\node [right=of 11a] (12a) {\tiny $12$};
						\node [left=of 10a] (9a) {\tiny $9$};
						\node [left=of 9a] (8a) {\tiny $8$};
						
						\draw (1a) -- (2a);
						\draw (1a) -- (3a);
						\draw (2a) -- (4a);
						\draw (2a) -- (5a);            
						\draw (3a) -- (6a);       
						\draw (3a) -- (7a);
						\draw (5a) -- (8a);
						\draw (5a) -- (9a);
						\draw (5a) -- (10a);
						\draw (5a) -- (11a);
						\draw (5a) -- (12a);
						
						\node [right = 24mm of 1a](1b) {\tiny $1$};
						\node [below left=of 1b] (2b) {\tiny $2$};
						\node [below right=of 1b] (3b) {\tiny $3$};
						\node [draw =Goldenrod!85!black, right =of 3b] (4b) {\tiny $4$};
						\node [below =of 2b] (5b) {\tiny $5$};
						\node [below =of 3b] (6b) {\tiny $6$};
						\node [below right =of 3b] (7b) {\tiny $7$};
						\node [below =of 5b] (10b) {\tiny $10$};
						\node [right =of 10b] (11b) {\tiny $11$};
						\node [right=of 11b] (12b) {\tiny $12$};
						\node [left=of 10b] (9b) {\tiny $9$};
						\node [left=of 9b] (8b) {\tiny $8$};
						
						\draw (1b) -- (2b);
						\draw (1b) -- (3b);
						\draw (3b) -- (4b);
						\draw (2b) -- (5b);            
						\draw (3b) -- (6b);       
						\draw (3b) -- (7b);
						\draw (5b) -- (8b);
						\draw (5b) -- (9b);
						\draw (5b) -- (10b);
						\draw (5b) -- (11b);
						\draw (5b) -- (12b);
						
						\node[draw=none, below = of 10a](t1){$\mathcal{T}^{\mathcolor{White}{\prime}}$};
						\node[draw=none, below= of 10b](t2){$\mathcal{T}^{\prime}$};
\end{tikzpicture}
\caption{Trees $\mathcal{T}$ and $\mathcal{T}^{\prime}$ of Example \ref{ex:NumEx-MnotConvex}.}
        \label{fig:CounterEx}
    \end{figure}

			Since the criterion to apply Theorem~\ref{th:ConvexOrderMShape} is the same as the one to apply Theorem~\ref{th:OrderingAlloc}, but on $\tau^*$, one ties the ordering of synecdochic pairs established above to $\preceq_{sha}$ through the following theorem. 

   \begin{theorem}
   \label{th:connection}
    Given a tree $\mathcal{T}=(\mathcal{V},\mathcal{E})$, let $\mathcal{T}^{\text{+}i}$, denote the tree obtained by connecting an additional vertex to $i$, for $i\in\mathcal{V}$. If $\mathcal{T}^{\text{+}v} \preceq_{sha} \mathcal{T}^{\text{+}w}$, for vertices $v,w\in \mathcal{V}$, then $N_v$ is less contributing than $N_w$ to $M$ with respect to Definition~\ref{def:RiskContributing}, that is to say $v$ is less central than $w$.
   \end{theorem}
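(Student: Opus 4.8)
The plan is to reduce the statement to a direct chaining of Theorem~\ref{th:ConvexOrderMShape} and Theorem~\ref{th:OrderingAlloc}, exploiting the fact that the two results are governed by the same stochastic-dominance criterion. First I would observe that $\mathcal{T}^{\text{+}v}$ and $\mathcal{T}^{\text{+}w}$ are both obtained from the common tree $\mathcal{T}$ by attaching one and the same extra vertex, say $u$, to $v$ in the former and to $w$ in the latter. They therefore share the vertex set $\mathcal{V}\cup\{u\}$ and differ only in the single edge incident to $u$: writing $\mathcal{E}^{\text{+}v}$ and $\mathcal{E}^{\text{+}w}$ for their edge sets, $\mathcal{E}^{\text{+}v}\backslash\{(u,v)\}=\mathcal{E}^{\text{+}w}\backslash\{(u,w)\}$. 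This is precisely the configuration required by Theorem~\ref{th:ConvexOrderMShape}, with $u$ playing the role of the relocated vertex and $v,w$ its two candidate anchors.

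The crucial simplification comes from the nature of $u$: being freshly appended, it is a leaf of both $\mathcal{T}^{\text{+}v}$ and $\mathcal{T}^{\text{+}w}$, so $\mathrm{dsc}(u)=\emptyset$ irrespective of the rooting. Consequently the subtree $\tau^{\dagger}=\{u\}\cup\mathrm{dsc}(u)$ of Theorem~\ref{th:ConvexOrderMShape} collapses to the single vertex $\{u\}$, and the residual subtree obtained by pruning it is exactly the original tree, $\tau^*=\mathcal{T}$. Since the dependence parameters are all equal to $\alpha$ under $\preceq_{sha}$ and carry over unchanged to $\tau^*=\mathcal{T}$, the random variables $H_v^{\tau^*_v}$ and $H_w^{\tau^*_w}$ appearing in the criterion of Theorem~\ref{th:ConvexOrderMShape} are literally $H_v^{\mathcal{T}_v}$ and $H_w^{\mathcal{T}_w}$, the very objects entering criterion~\eqref{eq:Criterion} of Theorem~\ref{th:OrderingAlloc}.

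With this identification in hand I would unwind the hypothesis. Because $\mathcal{T}^{\text{+}v}$ and $\mathcal{T}^{\text{+}w}$ lie one edge apart, the relation $\mathcal{T}^{\text{+}v}\preceq_{sha}\mathcal{T}^{\text{+}w}$ is, by the defining use of Theorem~\ref{th:ConvexOrderMShape}, the direct assertion that $H_v^{\tau^*_v}\preceq_{st} H_w^{\tau^*_w}$; substituting $\tau^*=\mathcal{T}$ gives $H_v^{\mathcal{T}_v}\preceq_{st} H_w^{\mathcal{T}_w}$. Feeding this into Theorem~\ref{th:OrderingAlloc} yields $(N_v,M)\preceq_{sm}(N_w,M)$, that is, $N_v$ is less contributing than $N_w$ to $M$ in the sense of Definition~\ref{def:RiskContributing}, which is the desired conclusion.

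The step that demands the most care is the unwinding in the previous paragraph: one must ensure that a $\preceq_{sha}$ relation between two shapes lying a single edge apart genuinely encodes the stochastic-dominance criterion on $\tau^*$, rather than being inherited through a transitive chain of intermediate shapes for which no single criterion need hold. Here this is safe because, the two trees being adjacent and agreeing on every edge except the one at $u$, any establishment of their $\preceq_{sha}$ ordering through Theorem~\ref{th:ConvexOrderMShape} is necessarily the direct comparison with residual subtree $\tau^*=\mathcal{T}$; once that is granted, the remainder is the bookkeeping of matching $H_v^{\tau^*_v}$ with $H_v^{\mathcal{T}_v}$ and invoking Theorems~\ref{th:ConvexOrderMShape} and~\ref{th:OrderingAlloc} in turn.
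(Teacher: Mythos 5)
Your proposal is correct and takes essentially the same route as the paper's own (much terser) proof: identify the common residual subtree as $\mathcal{T}$ itself, so that $H_v^{\tau^*_v}=H_v^{\mathcal{T}_v}$ and $H_w^{\tau^*_w}=H_w^{\mathcal{T}_w}$, read the hypothesis $\mathcal{T}^{\text{+}v}\preceq_{sha}\mathcal{T}^{\text{+}w}$ as the criterion $H_v^{\mathcal{T}_v}\preceq_{st}H_w^{\mathcal{T}_w}$ on the grounds that $\preceq_{sha}$ is induced by Theorem~\ref{th:ConvexOrderMShape}, and conclude via Theorem~\ref{th:OrderingAlloc}. The unwinding step you flag in your last paragraph is indeed the delicate point (a $\preceq_{sha}$ relation could in principle arise from a transitive chain through intermediate shapes, for which the direct criterion is only sufficient, not necessary), but the paper's proof makes exactly the same interpretive move with even less justification, so your treatment matches, and slightly exceeds, the paper's own level of rigor.
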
 

\begin{proof}
The residual subtree $\tau^*$ common to both $\mathcal{T}^{\text{+}v}$ and $\mathcal{T}^{\text{+}w}$ is $\mathcal{T}$. Hence, $\mathcal{T}^{\text{+}v} \preceq_{sha} \mathcal{T}^{\text{+}w}$ means $H_v^{\mathcal{T}_v}\preceq_{st}H_w^{\mathcal{T}_w}$, as the order $\preceq_{sha}$ was induced by Theorem~\ref{th:ConvexOrderMShape}. The result follows from Theorem~\ref{th:OrderingAlloc}.
\end{proof}

Results facilitating comparison under $\preceq_{sha}$, such as Corollary~\ref{th:Star-to-Series} and upcoming Corollaries~\ref{th:Tool1}, \ref{th:Tool2} and \ref{th:Tool3} as well as the Hasse diagrams in Appendix \ref{sect:ShapesBonanza}, thus also serve as tools in the analysis of the joint distribution of synecdochic pairs and related centrality indices. This is exhibited in the following example. 

\begin{figure}[H]
\centering
	\begin{tikzpicture}[every node/.style={text=Black, circle, draw = Blue!50!black, inner sep=0.2mm,   minimum size=3mm, fill = White, very thick}, node distance = 1mm, scale=0.1, thick]
                        \node [right = 24mm of 7b](2a) {\tiny 2};
					\node [below left=of 2a] (1a) {\tiny 1};
					\node [right=of 2a] (3a) {\tiny 3};
					\node [above =of 3a] (4a) {\tiny 4};
					\node [below =of 3a] (5a) {\tiny 5};
					\node [right =of 3a] (6a) {\tiny 6};
					\node [draw =Goldenrod!85!black, above left=of 2a] (7a) {\tiny 7};
					
					\draw (1a) --  (2a);
					\draw (2a) -- (3a);
					\draw (3a) -- (4a);
					\draw (3a) -- (5a);            
					\draw (3a) -- (6a);       
					\draw (2a) -- (7a);
					
					\node [right = 5mm of 6a](1b) {\tiny 1};
					\node [right=of 1b] (2b) {\tiny 2};
					\node [right=of 2b] (3b) {\tiny 3};
					\node [above =of 3b] (4b) {\tiny 4};
					\node [below =of 3b] (5b) {\tiny 5};
					\node [right =of 3b] (6b) {\tiny 6};
					
					\draw (1b) --  (2b);
					\draw (2b) -- (3b);
					\draw (3b) -- (4b);
					\draw (3b) -- (5b);            
					\draw (3b) -- (6b);       
					
					\node[right= 5mm of 6b] (1c) {\tiny 1};
					\node [right=of 1c] (2c) {\tiny 2};
					\node [right=of 2c] (3c) {\tiny 3};
					\node [above =of 3c] (4c) {\tiny 4};
					\node [below =of 3c] (5c) {\tiny 5};
					\node [above right=of 3c] (6c) {\tiny 6};
					\node [draw = Goldenrod!85!black, below right=of 3c] (7c) {\tiny 7};
					
					\draw (1c) --  (2c);
					\draw (2c) -- (3c);
					\draw (3c) -- (4c);
					\draw (3c) -- (5c);            
					\draw (3c) -- (6c);       
					\draw (3c) -- (7c);
					
					\node[draw=none, above = of 6a](phantom1){};
					\node[draw=none, below = of 2c](phantom2){};
					\draw[Black!50, very thick, ->] (2b) to [out = 120, in=45] (phantom1);
					\draw[Black!50, very thick, ->] (3b) to [out=315, in=225] (phantom2);
					
					\node[draw=none, below = of 5a](t1){$\mathcal{T}^{\prime}$};
					\node[draw=none, below = of 5b](t2){$\mathcal{T}^{\mathcolor{White}{\prime}}$};
					\node[draw=none, below = of 5c](t3){$\mathcal{T}^{\prime\prime}$};
					\end{tikzpicture}
					\caption{Trees $\mathcal{T}$, $\mathcal{T}^{\prime}$ and $\mathcal{T}^{\prime\prime}$ from Example~\ref{ex:exAlloc}.}
					\label{fig:MnotConvex}
				\end{figure}

			\begin{ex}
				\label{ex:exAlloc}  
				Consider $\boldsymbol{N}$ defined on tree $\mathcal{T}$, depicted in Figure~\ref{fig:MnotConvex}, and with dependence parameters $\boldsymbol{\alpha}$ such that $\alpha_e = \alpha$ for every $e\in\mathcal{E}$. We aim to show that $N_2$ is less contributing than $N_3$ to $M$, just as in Example~\ref{ex:exCentralite-1}, but this time without deriving the expressions of $\eta_2^{\mathcal{T}_2}$ and $\eta_3^{\mathcal{T}_3}$. If one were to add a vertex by connecting it to $2$, they would obtain tree $\mathcal{T}^{\prime}$; if one were to add a vertex by connecting it to $3$, they would obtain tree $\mathcal{T}^{\prime\prime}$. Now, referring to Figure~\ref{fig:ShapesBonanza2}, we see $\mathcal{T}^{\prime}\preceq_{sha}\mathcal{T}^{\prime\prime}$. Hence, returning to $\mathcal{T}$, one deduces that $N_2$ is less contributing than $N_3$ to $M$ given Theorem~\ref{th:connection}.
			\end{ex}

			We provide additional corollaries to Theorem \ref{th:ConvexOrderMShape}, which, similarly to Corollary~\ref{th:Star-to-Series}, prove useful to compare trees differing by more than one edge in accordance to $\preceq_{sha}$. Each one presents a different sequence of iterative manipulations of trees, for which successive applications of Theorem~\ref{th:ConvexOrderMShape} allow to establish the ordering along the sequence.

			\begin{cor}
				\label{th:Tool1}
				Let $\mathcal{T}^{(1)}=(\mathcal{V},\mathcal{E}^{(1)})$ be a tree such that, to a specific vertex $w\in\mathcal{V}$, are connected subtrees as well as $d_{\mathrm{ray}}$ single vertices, which we call \textit{ray-like} vertices, $d_{\mathrm{ray}}\in\mathbb{N}_1$.   
				Define $\{\mathcal{T}^{(k)},\, k\in\{2,\ldots,d_{\mathrm{ray}}-1\}\}\subseteq\Omega_{\mathcal{T}}$ as the sequence of trees obtained by successively moving ray-like vertices to form a series subtree connected to $w$. Assume $d \geq 4$. We have the following ordering of trees: 
				\begin{equation*}
					\mathcal{T}^{(d_{\mathrm{ray}}-1)}\preceq_{sha}\mathcal{T}^{(d_{\mathrm{ray}}-2)}\preceq_{sha}\cdots\preceq_{sha}\mathcal{T}^{(2)}\preceq_{sha}\mathcal{T}^{(1)}.
				\end{equation*} 
			\end{cor}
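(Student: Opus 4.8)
The plan is to reproduce, one edge at a time, the iterative scheme used in the proof of Corollary~\ref{th:Star-to-Series}, and then close the chain by transitivity of $\preceq_{sha}$. For each $k$, the consecutive trees $\mathcal{T}^{(k)}$ and $\mathcal{T}^{(k+1)}$ differ only by the anchorage of a single ray-like vertex $u$: in $\mathcal{T}^{(k)}$ it hangs directly from the hub $w$, while in $\mathcal{T}^{(k+1)}$ it is reattached to the current tip $s_k$ of the partial series $w-s_1-\cdots-s_k$ being assembled. Since $u$ is a leaf in both trees, pruning it produces the same residual subtree $\tau^*$, so Theorem~\ref{th:ConvexOrderMShape} applies as soon as I verify the stochastic-dominance criterion $H_{s_k}^{\tau^*_{s_k}}\preceq_{st}H_{w}^{\tau^*_{w}}$, the tip $s_k$ playing the role of $v$ and the hub $w$ the role of $w$ in that theorem.

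To verify the criterion I would write both pgfs through the recursion \eqref{eq:jointpgf-h} and recognize them as an instance of the pattern \eqref{eq:psiXstoX}. The crucial observation is that the other subtrees attached to $w$, together with the $m$ still-unmoved ray-like leaves, all remain downstream of $w$ under both rootings $\tau^*_w$ and $\tau^*_{s_k}$; consequently they contribute one and the same factor to $\eta_w$ in either rooting, namely $\beta(t)$ for the subtrees and $(1-\alpha+\alpha t)^m$ for the leaves. Writing $\mathcal{P}_{X_2}(t)=\beta(t)(1-\alpha+\alpha t)^m$ and unwinding the recursion along the length-$k$ series yields, for $t\in[-1,1]$,
\[ \eta_{s_k}^{\tau^*_{s_k}}(t\vecun{s_k}) = \psi^{\{k\}}\!\big(t\,\mathcal{P}_{X_2}(t),\,t,\alpha,1\big), \qquad \eta_{w}^{\tau^*_{w}}(t\vecun{w}) = \psi^{\{k\}}(t,t,\alpha,1)\,\mathcal{P}_{X_2}(t), \]
which are exactly $\mathcal{P}_{X_3}$ and $\mathcal{P}_{X_4}$ of \eqref{eq:psiXstoX} with $k$ compositions. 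Hence $H_{s_k}^{\tau^*_{s_k}}\preceq_{st}H_{w}^{\tau^*_{w}}$, and Theorem~\ref{th:ConvexOrderMShape} gives $M^{(k+1)}\preceq_{cx}M^{(k)}$, i.e.\ $\mathcal{T}^{(k+1)}\preceq_{sha}\mathcal{T}^{(k)}$.

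The main obstacle is the bookkeeping that keeps these two identities valid at every step: one must track the surgery so that reattaching $u$ raises the series length from $k$ to $k+1$ while leaving the factor $\mathcal{P}_{X_2}(t)$ untouched, the subtree contribution $\beta(t)$ staying inert precisely because it never changes position relative to $w$. This is the single genuinely new ingredient compared with Corollary~\ref{th:Star-to-Series}, where $\beta\equiv 1$ and only bare leaves are present; once $\beta(t)$ is absorbed into $\mathcal{P}_{X_2}(t)$, the comparison pattern reissues verbatim at each step by the recursiveness of $\eta_v^{\mathcal{T}_v}$. Iterating $\mathcal{T}^{(k+1)}\preceq_{sha}\mathcal{T}^{(k)}$ for $k=1,\ldots,d_{\mathrm{ray}}-2$ and invoking transitivity then yields the stated chain $\mathcal{T}^{(d_{\mathrm{ray}}-1)}\preceq_{sha}\cdots\preceq_{sha}\mathcal{T}^{(1)}$.
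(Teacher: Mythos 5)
Your proposal is correct and takes essentially the same route as the paper's proof: the paper likewise prunes the moving ray-like leaf, uses the recursion \eqref{eq:jointpgf-h} to write the two rooted pgfs as $\psi^{\{k\}}\bigl(t\,\mathcal{P}_{X_2}(t),t,\alpha,1\bigr)$ and $\psi^{\{k\}}(t,t,\alpha,1)\,\mathcal{P}_{X_2}(t)$ with the common factor $\mathcal{P}_{X_2}(t)=(1-\alpha+\alpha t)^{d_{\mathrm{ray}}-k-1}\prod_{j\in\mathrm{sub}}\bigl(1-\alpha+\alpha\,\eta_{j}^{\tau^*_w}(t\vecun{j})\bigr)$ collecting the unmoved rays and the generic subtrees (whose contributions agree under both rootings because the children of each $j\in\mathrm{sub}$ are unchanged), and then concludes via \eqref{eq:psiXstoX}, Theorem~\ref{th:ConvexOrderMShape}, and transitivity of $\preceq_{sha}$. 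Your factor $\beta(t)$ is exactly the paper's $\prod_{j\in\mathrm{sub}}$ term, so the two arguments coincide step for step.
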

			
			\begin{proof}
				Aiming to compare $\mathcal{T}^{(k)}$ and $\mathcal{T}^{(k+1)}$, $k\in\{1,\ldots,d_{\mathrm{ray}}-2\}$, let $v$ be the vertex to which the ray-like vertex from $\mathcal{T}^{(k)}$ anchors itself in $\mathcal{T}^{(k+1)}$. We show $H_{v}^{\tau^*_v} \preceq_{st} H_{w}^{\tau^*_w}$ when dependence parameters are all $\alpha$, $\alpha \in[0,1]$, with $\tau^*$ being $\mathcal{T}^{(k)}$ pruned of that moving ray-like vertex. With $\psi$ as in (\ref{eq:psi}), we obtain from (\ref{eq:jointpgf-h})
				\begin{equation}
					\eta_{v}^{\tau^*_v}(t\vecun{v})= \psi^{\{k\}}\left(t(1-\alpha+\alpha t)^{d_{\mathrm{ray}}-k-1}\prod_{j\in\mathrm{sub}}(1-\alpha+\alpha\eta_{j}^{\tau^*_v}(t\vecun{j}), t, \alpha, 1\right),
					\label{eq:proofTool1-1}
				\end{equation}
				and
				\begin{equation}
					\eta_{w}^{\tau^*_w}(t\vecun{w}) = \psi^{\{k\}}(t, t, \alpha, 1) \times (1-\alpha+\alpha t)^{d_{\mathrm{ray}}-k-1}\prod_{j\in\mathrm{sub}}(1-\alpha+\alpha\eta_{j}^{\tau^*_w}(t\vecun{j})),
					\label{eq:proofTool1-2}
				\end{equation}
				with $\mathrm{sub}$ denoting the set of vertices by which the generic subtrees are connected to $v$. For all $j\in\mathrm{sub}$, we have $\eta_{j}^{\tau^*_v}(t\vecun{j}) = \eta_{j}^{\tau^*_w}(t\vecun{j})$, $t\in[-1,1]$, as the children of $j$ are the same under both rooting. Therefore, from (\ref{eq:psiXstoX}), (\ref{eq:proofTool1-1}) and (\ref{eq:proofTool1-2}), we deduce $H_{v}^{\tau^*_v} \preceq_{st} H_{w}^{\tau^*_w}$; hence, $\mathcal{T}^{(k+1)}\preceq_{sha}\mathcal{T}^{(k)}$, $k\in\{1,\ldots,d_{\mathrm{ray}}-2\}$.
			\end{proof}

\begin{figure}[H]
					\centering
					\begin{tikzpicture}[every node/.style={text = black, draw=none, fill = none, text opacity = 1, rectangle, inner sep=0mm, outer sep = 1.5mm}, node distance = 5mm, scale=0.1, thick]
						\node(a0){
							\begin{tikzpicture}[every node/.style={circle, draw = Blue!50!black, inner sep=0.1mm,   minimum size=2mm, fill = White, very thick, outer sep = 0.1mm}, node distance = 1mm, scale=0.1, thick]
								\node[minimum size=1mm, draw = Blue!50!black](1){\tiny $w$};
								\node[draw = Red!75, minimum size=2mm, above = of 1](2){};
								\node[draw = Red!75, minimum size=2mm, above left = of 2](3){};
								\node[draw = Red!75, minimum size=2mm, above right = of 2](4){};
								\node[draw = Red!75, minimum size=2mm, above right = of 1](5){};
								\node[draw = Red!75, minimum size=2mm, above right= of 5](6){};
								\node[draw = White, below = of 1](10){...};
								\node[draw = Goldenrod!85!black, below right = of 1](11){};
								\node[draw = Goldenrod!85!black, right = of 1](12){};
								\node[draw = Goldenrod!85!black, below left = of 1](9){};
								\node[draw = Goldenrod!85!black, above left = of 1](7){};
								\node[draw = Goldenrod!85!black, left = of 1](8){};
								
								\draw (1) --  (2);
								\draw (2) -- (3);
								\draw (2) -- (4);
								\draw (1) -- (5);
								\draw (5) -- (6);
								\draw (1) -- (7);
								\draw (1) -- (8);
								\draw (1) -- (9);
								\draw (1) -- (10);
								\draw (1) -- (11);
								\draw (1) -- (12);
							\end{tikzpicture}
						};
						\node[below = 0mm of a0, outer sep = 0mm]{$\mathcal{T}^{(1)}$};
						\node[ right= of a0](a1){
							\begin{tikzpicture}[every node/.style={circle, draw = Blue!50!black, inner sep=0.1mm,   minimum size=2mm, fill = White, very thick, outer sep = 0.1mm}, node distance = 1mm, scale=0.1, thick ]
								\node[minimum size=1mm, draw = Blue!50!black](1){\tiny $w$};
								\node[draw = Red!75, minimum size=2mm, above = of 1](2){};
								\node[draw = Red!75, minimum size=2mm, above left = of 2](3){};
								\node[draw = Red!75, minimum size=2mm, above right = of 2](4){};
								\node[draw = Red!75, minimum size=2mm, above right = of 1](5){};
								\node[draw = Red!75, minimum size=2mm, above right= of 5](6){};
								\node[draw = White, below = of 1](10){...};
								\node[draw = Goldenrod!85!black, below right = of 1](11){};
								\node[draw = Goldenrod!85!black, right = of 1](12){};
								\node[draw = Goldenrod!85!black, below left = of 1](9){};
								\node[draw = Goldenrod!85!black, right = of 12](7){};
								\node[draw = Goldenrod!85!black, left = of 1](8){};
								
								\draw (1) --  (2);
								\draw (2) -- (3);
								\draw (2) -- (4);
								\draw (1) -- (5);
								\draw (5) -- (6);
								\draw (12) -- (7);
								\draw (1) -- (8);
								\draw (1) -- (9);
								\draw (1) -- (10);
								\draw (1) -- (11);
								\draw (1) -- (12);
							\end{tikzpicture}
						};
						\node[below = 0mm of a1, outer sep = 0mm]{$\mathcal{T}^{(2)}$};
						\node[right= of a1](a2){
							\begin{tikzpicture}[every node/.style={circle, draw = Blue!50!black, inner sep=0.1mm,   minimum size=2mm, fill = White, very thick, outer sep = 0.1mm}, node distance = 1mm, scale=0.1 , thick]
								\node[minimum size=1mm, draw = Blue!50!black](1){\tiny $w$};
								\node[draw = Red!75, minimum size=2mm, above = of 1](2){};
								\node[draw = Red!75, minimum size=2mm, above left = of 2](3){};
								\node[draw = Red!75, minimum size=2mm, above right = of 2](4){};
								\node[draw = Red!75, minimum size=2mm, above right = of 1](5){};
								\node[draw = Red!75, minimum size=2mm, above right= of 5](6){};
								\node[draw = White, below = of 1](10){...};
								\node[draw = Goldenrod!85!black, below right = of 1](11){};
								\node[draw = Goldenrod!85!black, right = of 1](12){};
								\node[draw = Goldenrod!85!black, below left = of 1](9){};
								\node[draw = Goldenrod!85!black, right = of 12](7){};
								\node[draw = Goldenrod!85!black, right = of 7](8){};
								
								\draw (1) --  (2);
								\draw (2) -- (3);
								\draw (2) -- (4);
								\draw (1) -- (5);
								\draw (5) -- (6);
								\draw (12) -- (7);
								\draw (7) -- (8);
								\draw (1) -- (9);
								\draw (1) -- (10);
								\draw (1) -- (11);
								\draw (1) -- (12);
							\end{tikzpicture}
						};
						\node[ below = 0mm of a2, outer sep = 0mm]{$\mathcal{T}^{(3)}$};
						\node[ right= of a2](dots){\dots
						};
						\node[right= of dots](a3){
							\begin{tikzpicture}[every node/.style={circle, draw = Goldenrod!85!black, inner sep=0.1mm,   minimum size=2mm, fill = White, very thick, outer sep = 0.1mm}, node distance = 1mm, scale=0.1 , thick]
								\node[minimum size=1mm, draw = Blue!50!black](1){\tiny $w$};
								\node[draw = Red!75, minimum size=2mm, above = of 1](2){};
								\node[draw = Red!75, minimum size=2mm, above left = of 2](3){};
								\node[draw = Red!75, minimum size=2mm, above right = of 2](4){};
								\node[draw = Red!75, minimum size=2mm, above right = of 1](5){};
								\node[draw = Red!75, minimum size=2mm, above right = of 5](6){};
								\node[draw = White, below = of 1](phantom){};
								\node[draw = Goldenrod!85!black, right = of 1](12){};
								\node[draw = Goldenrod!85!black, right = of 12](7){};
								\node[draw = White, right = of 7](10){...};
								\node[draw = Goldenrod!85!black, right = of 10](8){};
								\node[draw = Goldenrod!85!black, below right = of 1](11){};
								
								\draw (1) --  (2);
								\draw (2) -- (3);
								\draw (2) -- (4);
								\draw (1) -- (5);
								\draw (5) -- (6);
								\draw (12) -- (7);
								\draw (1) -- (11);
								\draw (10) -- (8);
								\draw (7) -- (10);
								\draw (1) -- (12);
							\end{tikzpicture}
						};
						\node[below = 0mm of a3, outer sep = 0mm]{$\mathcal{T}^{(d_{\mathrm{ray}}-2)}$};
						\node[right= of a3](a4){
							\begin{tikzpicture}[every node/.style={circle, draw = Goldenrod!85!black, inner sep=0.1mm,   minimum size=2mm, fill = White, very thick, outer sep = 0.1mm}, node distance = 1mm, scale=0.1 , thick]
								\node[minimum size=1mm, draw = Blue!50!black](1){\tiny $w$};
								\node[draw = Red!75, minimum size=2mm, above = of 1](2){};
								\node[draw = Red!75, minimum size=2mm, above left = of 2](3){};
								\node[draw = Red!75, minimum size=2mm, above right = of 2](4){};
								\node[draw = Red!75, minimum size=2mm, above right = of 1](5){};
								\node[draw = Red!75, minimum size=2mm, above right = of 5](6){};
								\node[draw = White, below = of 1](phantom){};
								\node[draw = Goldenrod!85!black, right = of 1](12){};
								\node[draw = Goldenrod!85!black, right = of 12](7){};
								\node[draw = White, right = of 7](10){...};
								\node[draw = Goldenrod!85!black, right = of 10](8){};
								\node[draw = Goldenrod!85!black, right = of 8](9){};
								
								\draw (1) --  (2);
								\draw (2) -- (3);
								\draw (2) -- (4);
								\draw (1) -- (5);
								\draw (5) -- (6);
								\draw (12) -- (7);
								\draw (10) -- (8);
								\draw (9) -- (8);
								\draw (7) -- (10);
								\draw (1) -- (12);
							\end{tikzpicture}
						};
						\node[below = 0mm of a4, outer sep = 0mm]{$\mathcal{T}^{(d_{\mathrm{ray}}-1)}$};
						\draw[very thick] (a0) -- (a1);	
						\draw[very thick] (a1) -- (a2);	
						\draw[very thick] (a2) -- (dots);	
						\draw[very thick] (dots) -- (a3);	
						\draw[very thick] (a3) -- (a4);	
					\end{tikzpicture}
					\caption{Example of a sequence of trees encompassed by Corollary~\ref{th:Tool1}. Vertex of interest $w$ is in blue (dark shade), generic subtrees connected to it are in orange (mid shade), ray-like vertices and the series subtree they form are in yellow (light shade). }
					\label{fig:Reconstr1}
				\end{figure}

   	We depict in Figure~\ref{fig:Reconstr1} an example of the sequence $\{\mathcal{T}^{(k)},\, k\in\{1,\ldots,d_{\mathrm{ray}}-1\}\}$ induced by the reconstruction of a tree $\mathcal{T}^{(1)}$.
			Corollary~\ref{th:Tool1} should be seen as a generalization of Corollary~\ref{th:Star-to-Series}: vertex $d$ in Figure \ref{fig:Star-to-Series} is a connected subtree while vertices 2 to $d-1$ are the ray-like vertices.

			\begin{cor}
				\label{th:Tool2}
				Let $\mathcal{T}^{[1]}=(\mathcal{V},\mathcal{E}^{[1]})$ be a $d_{\mathrm{se}}$-vertex series subtree to which is connected at one end a subtree $\tau$, $d_{\mathrm{se}}\geq 3$. Let $\{l_k,\, k\in\{1,\ldots,d_{\mathrm{se}}\}\}$ label the successive vertices of the series subtree, with $l_1$ connected to $\tau$.
				Define $\{\mathcal{T}^{[k]},\, k\in\{2,\ldots,k_{\max}\}\}\subseteq\Omega_{\mathcal{T}}$, $k_{\max}=\lfloor\frac{d_{\mathrm{se}}}{2}\rfloor$, as the sequence of trees obtained by anchoring $\tau$ to $l_k$ rather than $l_1$, $k\in\{1,\ldots,k_{\max}\}$. 
				We have the following ordering of trees:   
				\begin{equation*}
					\mathcal{T}^{[1]}\preceq_{sha}\mathcal{T}^{[2]}\preceq_{sha}\cdots \preceq_{sha}\mathcal{T}^{[k_{\max}-1]}\preceq_{sha}\mathcal{T}^{[k_{\max}]}.
				\end{equation*} 
			\end{cor}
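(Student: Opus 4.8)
The plan is to mimic the iterative strategy of Corollary~\ref{th:Star-to-Series}: establish $\mathcal{T}^{[k]}\preceq_{sha}\mathcal{T}^{[k+1]}$ for each $k\in\{1,\ldots,k_{\max}-1\}$ by a single application of Theorem~\ref{th:ConvexOrderMShape}, and then chain the relations by transitivity of $\preceq_{sha}$. Fix such a $k$. The trees $\mathcal{T}^{[k]}$ and $\mathcal{T}^{[k+1]}$ differ only in that the attachment vertex $u$ of $\tau$ is joined to $l_k$ in the former and to $l_{k+1}$ in the latter, so Theorem~\ref{th:ConvexOrderMShape} applies with $v=l_k$, $w=l_{k+1}$, and moving block $\tau^{\dagger}=\tau$. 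Pruning $\tau$ from either tree leaves the same residual subtree $\tau^*$, namely the bare $d_{\mathrm{se}}$-vertex series chain $l_1\cdots l_{d_{\mathrm{se}}}$. Hence it suffices to verify the criterion $H_{l_k}^{\tau^*_{l_k}}\preceq_{st}H_{l_{k+1}}^{\tau^*_{l_{k+1}}}$ on this bare chain, all parameters being equal to $\alpha$.

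First I would write down the two pgfs from (\ref{eq:jointpgf-h}). Rooting the chain at $l_k$ splits it into two arms, of lengths $p:=k-1$ (towards $l_1$) and $q:=d_{\mathrm{se}}-k$ (towards $l_{d_{\mathrm{se}}}$); writing $g_m$ for the pgf of the propagation total of a bare chain of $m$ vertices rooted at its end (so $g_m=\psi^{\{m-1\}}(t,t,\alpha,1)$, $g_0\equiv1$, and $g_{m+1}=t(1-\alpha+\alpha g_m)$), this yields
\begin{equation*}
\mathcal{P}_{H_{l_k}^{\tau^*_{l_k}}}(t)=t\,(1-\alpha+\alpha g_{p})(1-\alpha+\alpha g_{q}),\qquad
\mathcal{P}_{H_{l_{k+1}}^{\tau^*_{l_{k+1}}}}(t)=t\,(1-\alpha+\alpha g_{p+1})(1-\alpha+\alpha g_{q-1}),
\end{equation*}
since the move lengthens the short arm by one and shortens the long arm by one. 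The crux is the following cancellation: substituting $1-\alpha+\alpha g_{p+1}=1-\alpha+\alpha t(1-\alpha+\alpha g_p)$ and $1-\alpha+\alpha g_{q}=1-\alpha+\alpha t(1-\alpha+\alpha g_{q-1})$ and expanding, the common quadratic term $\alpha t^2(1-\alpha+\alpha g_p)(1-\alpha+\alpha g_{q-1})$ drops out, leaving the clean identity
\begin{equation*}
\mathcal{P}_{H_{l_k}^{\tau^*_{l_k}}}(t)-\mathcal{P}_{H_{l_{k+1}}^{\tau^*_{l_{k+1}}}}(t)=\alpha(1-\alpha)\,t\,\big(g_{p}(t)-g_{q-1}(t)\big).
\end{equation*}

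To turn this into a stochastic-dominance statement I would use the survival-function characterization of $\preceq_{st}$ for $\mathbb{N}$-valued variables: $X\preceq_{st}X'$ iff $\frac{\mathcal{P}_{X}(t)-\mathcal{P}_{X'}(t)}{1-t}$ has only nonnegative coefficients, this series being $\sum_n\big(\Pr(X'>n)-\Pr(X>n)\big)t^n$. Dividing the identity by $1-t$, it remains to check that $\frac{g_p(t)-g_{q-1}(t)}{1-t}$ has nonnegative coefficients, which is precisely $Y_p\preceq_{st}Y_{q-1}$ where $Y_m$ denotes the variable with pgf $g_m$. Since $p=k-1\le d_{\mathrm{se}}-k-1=q-1$ for every $k\le k_{\max}-1$, this reduces to the monotonicity of chain-propagation in the chain length, i.e. $Y_p\preceq_{st}Y_{q-1}$ whenever $p\le q-1$; this follows by a one-line induction on the recursion $g_{m+1}=t(1-\alpha+\alpha g_m)$, as the map $y\mapsto t(1-\alpha+\alpha y)$ preserves $\preceq_{st}$. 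Multiplying by the nonnegative factor $\alpha(1-\alpha)t$ then gives $H_{l_k}^{\tau^*_{l_k}}\preceq_{st}H_{l_{k+1}}^{\tau^*_{l_{k+1}}}$, whence $M^{[k]}\preceq_{cx}M^{[k+1]}$ and $\mathcal{T}^{[k]}\preceq_{sha}\mathcal{T}^{[k+1]}$ by Theorem~\ref{th:ConvexOrderMShape}.

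The main obstacle is the middle step: seeing that the two-armed pgfs collapse to the single difference $\alpha(1-\alpha)t(g_p-g_{q-1})$. Unlike Corollaries~\ref{th:Star-to-Series} and~\ref{th:Tool1}, where the moving vertex sits on one side of $w$ and the tool (\ref{eq:psiXstoX}) applies almost verbatim, here $\tau$ travels between two nontrivial chains, so a naive almost-sure coupling is blocked by the correlation between each arm's \emph{overflow} indicator and its own length; the cancellation identity is exactly what sidesteps this. I would also take care with the index bookkeeping --- verifying that $p\le q-1$ holds for all $k\le k_{\max}-1$, equivalently that the choice $k_{\max}=\lfloor d_{\mathrm{se}}/2\rfloor$ keeps the short arm no longer than the long arm throughout the slide --- and with the orientation of the final inequality, which the survival-function criterion pins down unambiguously.
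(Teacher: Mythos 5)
Your proposal is correct and follows essentially the same route as the paper's own proof: fix $k$, apply Theorem~\ref{th:ConvexOrderMShape} with the bare series chain as the residual subtree $\tau^*$, expand both rooted pgfs so that the common term $\alpha t^2(1-\alpha+\alpha g_p)(1-\alpha+\alpha g_{q-1})$ cancels --- the paper's equations (\ref{eq:proofTool2-1})--(\ref{eq:proofTool2-2}) perform exactly this mixture decomposition --- and reduce the criterion to a stochastic comparison of the two arms, which is precisely where $k\le k_{\max}$ enters. The only differences are in bookkeeping: the paper justifies the arm comparison by the chain's symmetry (identifying $\eta_{l_k}^{\tau^*_{l_{k+1}}}$ with $\eta_{l_{d_{\mathrm{se}}+1-k}}^{\tau^*_{l_{k+1}}}$, a terminal sub-chain of the long arm), whereas you prove the same fact by induction on the recursion $g_{m+1}=t(1-\alpha+\alpha g_m)$ and convert pgf inequalities to $\preceq_{st}$ via the survival-series criterion, making explicit two steps the paper leaves implicit.
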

			
			\begin{proof}			
		Let $\tau^{*}$ denote the series subtree of interest. 
				We show $H_{l_k}^{\tau^{*}_{l_k}}\preceq_{st} H_{l_{k+1}}^{\tau^{*}_{l_{k+1}}}$ for all $k\in\{1,\ldots,k_{\max}-1\}$ when dependence parameters are all $\alpha$, $\alpha\in(0,1)$. We remark that, rooting on $l_{k+1}$, $\mathrm{ch}(l_k)=\{l_{k-1}\}$; meanwhile rooting on $l_{k}$, $\mathrm{ch}(l_k)=\{l_{k-1}, l_{k+1}\}$. Also, from (\ref{eq:jointpgf-h}), $\eta_{l_{k+2}}^{\tau^{*}_{l_{k}}}(t\vecun{l_{k+2}}) = \eta_{l_{k+2}}^{\tau^{*}_{l_{k+1}}}(t\vecun{l_{k+2}})$ as the children of $l_{k+2}$ are the same under both rootings, leading to
				\begin{align}
					\eta_{l_k}^{\tau^{*}_{l_k}}(t\vecun{l_k}) 
					&= \eta_{l_k}^{\tau^{*}_{l_{k+1}}}(t\vecun{l_k}) \times (1-\alpha+\alpha t(1-\alpha+\alpha \eta_{l_{k+2}}^{\tau^{*}_{l_{k}}}(t\vecun{l_{k+2}})))\notag\\
					&= (1-\alpha)\eta_{l_k}^{\tau^{*}_{l_{k+1}}}(t\vecun{l_k}) +\alpha t\eta_{l_k}^{\tau^{*}_{l_{k+1}}}(t\vecun{l_k})(1-\alpha+\alpha \eta_{l_{k+2}}^{\tau^{*}_{l_{k+1}}}(t\vecun{l_{k+2}})).
					\label{eq:proofTool2-1}
				\end{align}
				Next, directly from (\ref{eq:jointpgf-h}), one obtains
					\begin{align}
				&\eta_{l_{k+1}}^{\tau^{*}_{l_{k+1}}}(t\vecun{l_{k+1}})
						=t(1-\alpha+\alpha\eta_{l_k}^{\tau^{*}_{l_{k+1}}}(t\vecun{l_k}))(1-\alpha+\alpha\eta_{l_{k+2}}^{\tau^{*}_{l_{k+1}}}(t\vecun{l_{k+2}}))\notag\\
				&\quad\quad= (1-\alpha)t(1-\alpha + \alpha\eta_{l_{k+2}}^{\tau^{*}_{l_{k+1}}}(t\vecun{l_{k+2}}) +\alpha t\eta_{l_k}^{\tau^{*}_{l_{k+1}}}(t\vecun{l_k})(1-\alpha+\alpha \eta_{l_{k+2}}^{\tau^{*}_{l_{k+1}}}(t\vecun{l_{k+2}})).
			\label{eq:proofTool2-2}
				\end{align}
				We note $\eta_{l_k}^{\tau^{*}_{l_{k+1}}}(t\vecun{l_k}) = \eta_{l_{d_{\mathrm{se}}+1-k}}^{\tau^{*}_{l_{k+1}}}(t\vecun{l_{d_{\mathrm{se}}+1-k}})$ by the symmetry of the series subtree. Since $l_{d_{\mathrm{se}}+1-k}\in\{l_{k+2}\}\cup\mathrm{dsc}(l_{k+2})$ under rooting $l_{k+1}$, for $k\leq k_{\max}$, the pgfs in (\ref{eq:proofTool2-1}) and (\ref{eq:proofTool2-2}) indicate that $H_{l_k}^{\tau^*_{l_k}}\preceq_{st} H_{l_{k+1}}^{\tau^*_{l_{k+1}}}$. Hence, $\mathcal{T}^{[l_k]}\preceq_{sha} \mathcal{T}^{[l_{k+1}]}$ according to Theorem~\ref{th:ConvexOrderMShape}.  
			\end{proof}
	\begin{figure}[H]
					\centering
					\begin{tikzpicture}[every node/.style={text = black, draw=none, fill = none, text opacity = 1, rectangle, inner sep=0mm, outer sep=1mm}, node distance = 5mm, scale=0.1, thick]
						\node(a0){
							\begin{tikzpicture}[every node/.style={circle, draw = Blue!50!black, inner sep=0.1mm,   minimum size=2mm, fill = White, very thick, outer sep = 0.1mm}, node distance = 1mm, scale=0.1 , thick]
								\node(1){\tiny $l_1$};
								\node[right = of 1](2){\tiny $l_2$};
								\node[right = of 2](3){\tiny $l_3$};
								\node[draw= White, right = of 3](4){...};
								\node[rounded rectangle, right = of 4](5){\tiny ${l_{d_{se}}}$};
								
								\node[draw = Goldenrod!85!black, below = of 1](6){};
								\node[draw = Goldenrod!85!black, below left = of 6](7){};
								\node[draw = Goldenrod!85!black, below right = of 6](8){};
								
								\draw (1) --  (2);
								\draw (2) -- (3);
								\draw (3) -- (4);
								\draw (4) -- (5);
								\draw (1) -- (6);
								\draw (6) -- (7);
								\draw (6) -- (8);
							\end{tikzpicture}
						};
						\node[below = 0mm of a0]{$\mathcal{T}^{[1]}$};
						\node[right= of a0](a1){
							\begin{tikzpicture}[every node/.style={circle, draw = Blue!50!black, inner sep=0.1mm,   minimum size=2mm, fill = White, very thick, outer sep = 0.1mm}, node distance = 1mm, scale=0.1 , very thick]
								\node(1){\tiny $l_1$};
								\node[right = of 1](2){\tiny $l_2$};
								\node[right = of 2](3){\tiny $l_3$};
								\node[draw= White, right = of 3](4){...};
								\node[rounded rectangle, right = of 4](5){\tiny ${l_{d_{se}}}$};
								
								\node[draw = Goldenrod!85!black, below = of 2](6){};
								\node[draw = Goldenrod!85!black, below left = of 6](7){};
								\node[draw = Goldenrod!85!black, below right = of 6](8){};
								
								\draw (1) --  (2);
								\draw (2) -- (3);
								\draw (3) -- (4);
								\draw (4) -- (5);
								\draw (2) -- (6);
								\draw (6) -- (7);
								\draw (6) -- (8);
							\end{tikzpicture}
						};
						\node[below = 0mm of a1]{$\mathcal{T}^{[2]}$};
						\node[right= of a1](a2){
							\begin{tikzpicture}[every node/.style={circle, draw = Blue!50!black, inner sep=0.1mm,   minimum size=2mm, fill = White, very thick, outer sep = 0.1mm}, node distance = 1mm, scale=0.1 , thick]
								\node(1){\tiny $l_1$};
								\node[right = of 1](2){\tiny $l_2$};
								\node[right = of 2](3){\tiny $l_3$};
								\node[draw= White, right = of 3](4){...};
								\node[rounded rectangle, right = of 4](5){\tiny ${l_{d_{se}}}$};
								
								\node[draw = Goldenrod!85!black, below = of 3](6){};
								\node[draw = Goldenrod!85!black, below left = of 6](7){};
								\node[draw = Goldenrod!85!black, below right = of 6](8){};
								
								\draw (1) --  (2);
								\draw (2) -- (3);
								\draw (3) -- (4);
								\draw (4) -- (5);
								\draw (3) -- (6);
								\draw (6) -- (7);
								\draw (6) -- (8);
							\end{tikzpicture}
						};
						\node[below = 0mm of a2]{$\mathcal{T}^{[3]}$};
						\node[right= of a2](dots){\dots
						};
						\node[right= of dots](a3){
							\begin{tikzpicture}[every node/.style={circle, draw = Blue!50!black, inner sep=0.1mm,   minimum size=2mm, fill = White, very thick, outer sep = 0.1mm}, node distance = 1mm, scale=0.1 , thick]
								\node(1){\tiny $l_1$};
								\node[draw = White, right = of 1](2){...};
								\node[rounded rectangle, right = of 2](3){\tiny ${l_{k_{\max}}}$};
								\node[draw= White, right = of 3](4){...};
								\node[right = of 4](5){};
								\node[rounded rectangle, right = of 4](5){\tiny ${l_{d_{se}}}$};
								
								\node[draw = Goldenrod!85!black, below = of 3](6){};
								\node[draw = Goldenrod!85!black, below left = of 6](7){};
								\node[draw = Goldenrod!85!black, below right = of 6](8){};
								
								\draw (1) --  (2);
								\draw (2) -- (3);
								\draw (3) -- (4);
								\draw (4) -- (5);
								\draw (3) -- (6);
								\draw (6) -- (7);
								\draw (6) -- (8);
							\end{tikzpicture}
						};
						\node[below = 0.3mm of a3]{$\mathcal{T}^{[k_{\max}]}$};
						\draw[very thick] (a0) -- (a1);	
						\draw[very thick] (a1) -- (a2);	
						\draw[very thick] (a2) -- (dots);	
						\draw[very thick] (dots) -- (a3);	
					\end{tikzpicture}
					
					\caption{Example of a sequence of trees encompassed by Corollary~\ref{th:Tool2}. The series subtree is in blue (dark shade) while the generic subtree $\tau$ is in yellow (light shade). }
					\label{fig:Reconstr2}
				\end{figure}
	We depict in Figure~\ref{fig:Reconstr2} an example of the sequence $\{\mathcal{T}^{[k]},\, k\in\{1,\ldots,k_{\max}\}\}$ induced by a tree $\mathcal{T}^{[1]}$.
   
			\begin{cor}
				\label{th:Tool3}
				Consider $d_{\mathrm{beam}},d_{\mathrm{ray}}\in\mathbb{N}_1$, with $d_{\mathrm{beam}}+d_{\mathrm{ray}}\leq d$, assuming $d\geq 4$. Let $\{\mathcal{T}^{\{m;n\}},\, m,n\in\mathbb{N}:m+n=d_{\mathrm{ray}}\}\subseteq\Omega_{\mathcal{T}}$ be a sequence of trees such that to a $d_{\mathrm{beam}}$-vertex series subtree, which we call the \textit{beam subtree} and whose vertices are labelled $\{l_k, \, k\in\{1,\ldots,d_{\mathrm{beam}}\}\}$, we connect $m$ vertices to an end and $n$ vertices to the other end. Additional subtrees may be connected to the beam subtree, in a symmetric manner: for a subtree connected to $l_k$, an identical subtree is connected to $l_{d_{\mathrm{beam}}-k}$, $k\in\{1,\ldots,\lfloor\frac{d_{\mathrm{beam}}}{2} \rfloor\}$. For $m_1,n_1,m_2,n_2\in\mathbb{N}$ such that $m_1+n_1=m_2+n_2$ , if $|m_1-n_1|\leq|m_2-n_2|$, then $\mathcal{T}^{\{m_1;n_1\}}\preceq_{sha}\mathcal{T}^{\{m_2;n_2\}}$. 
			\end{cor}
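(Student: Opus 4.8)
The plan is to reduce the comparison of two arbitrary configurations to a chain of single-ray relocations, each of which is a one-edge modification to which Theorem~\ref{th:ConvexOrderMShape} applies directly. I label the beam vertices $l_1,\ldots,l_{d_{\mathrm{beam}}}$ and agree that in $\mathcal{T}^{\{m;n\}}$ the $m$ ray-like vertices hang from $l_1$ and the $n$ ray-like vertices from $l_{d_{\mathrm{beam}}}$. Because the additional subtrees are anchored symmetrically about the midpoint of the beam, the reflection exchanging the two ends is a graph automorphism; consequently $\mathcal{T}^{\{m;n\}}$ and $\mathcal{T}^{\{n;m\}}$ are isomorphic and induce the same law of $M$. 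Hence I may assume $m_1\le n_1$ and $m_2\le n_2$. Combined with $m_1+n_1=m_2+n_2=d_{\mathrm{ray}}$ and $n_1-m_1\le n_2-m_2$, this forces $m_2\le m_1\le n_1\le n_2$, so $\mathcal{T}^{\{m_2;n_2\}}$ is reached from $\mathcal{T}^{\{m_1;n_1\}}$ by moving $m_1-m_2$ ray-like vertices, one at a time, from the $l_1$-end to the $l_{d_{\mathrm{beam}}}$-end, through the configurations $\{m_1;n_1\},\{m_1-1;n_1+1\},\ldots,\{m_2;n_2\}$. Along this chain the shrinking end never exceeds the growing end, since the remaining count $m$ on the $l_1$-end obeys $m\le m_1\le d_{\mathrm{ray}}/2$; by transitivity of $\preceq_{sha}$ it then suffices to treat one step.

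Next I would fix a single step $\{m;n\}\to\{m-1;n+1\}$ with $1\le m\le n$ and let $u$ be the relocated ray. In the notation of Theorem~\ref{th:ConvexOrderMShape}, $u$ is attached to $v=l_1$ in $\mathcal{T}^{\{m;n\}}$ and to $w=l_{d_{\mathrm{beam}}}$ in $\mathcal{T}^{\{m-1;n+1\}}$, with everything else unchanged; since $u$ is a leaf we have $\tau^{\dagger}=\{u\}$, and pruning it from either tree leaves the same residual subtree $\tau^*$, namely the beam carrying $p:=m-1$ rays at $l_1$, $q:=n$ rays at $l_{d_{\mathrm{beam}}}$, together with the symmetric subtrees. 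Theorem~\ref{th:ConvexOrderMShape} then delivers $\mathcal{T}^{\{m;n\}}\preceq_{sha}\mathcal{T}^{\{m-1;n+1\}}$ the moment I verify
\begin{equation*}
H_{l_1}^{\tau^*_{l_1}}\preceq_{st} H_{l_{d_{\mathrm{beam}}}}^{\tau^*_{l_{d_{\mathrm{beam}}}}},\qquad p=m-1<n=q .
\end{equation*}
This lone stochastic-dominance statement is the crux, and it is where the hypothesis $|m_1-n_1|\le|m_2-n_2|$ enters.

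The main obstacle is thus to show that the end carrying more ray-like vertices has the stochastically larger $H$; a direct unwinding of \eqref{eq:jointpgf-h} is awkward because the asymmetry $p\ne q$ sits at opposite depths of the two beam recursions. The cleanest route I foresee is to use the percolation reading of $\eta_{v}^{\tau^*_v}(t\vecun{v})$: setting every argument to $t$ in \eqref{eq:jointpgf-h} gives $\eta_{v}(t\vecun{v})=t\prod_{j\in\mathrm{ch}(v)}(1-\alpha+\alpha\,\eta_{j}(t\vecun{j}))$, which is exactly the pgf of the size of the open cluster $C(v)$ of $v$ in independent bond percolation on $\tau^*$ with edge-retention probability $\alpha$, a quantity independent of the chosen root. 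Writing $G_{\mathrm{sym}}$ for $\tau^*$ with the $q-p$ surplus rays at $l_{d_{\mathrm{beam}}}$ deleted, the reflection $\rho$ becomes an automorphism of $G_{\mathrm{sym}}$ exchanging $l_1$ and $l_{d_{\mathrm{beam}}}$. Given a configuration $\omega$ on $\tau^*$, I would define $\tilde\omega$ by reflecting the $G_{\mathrm{sym}}$-edge states through $\rho$ while keeping the surplus-ray edges fixed; this map is law-preserving, and $|C(l_1)|(\tilde\omega)$ equals the $G_{\mathrm{sym}}$-cluster of $l_{d_{\mathrm{beam}}}$ under $\omega$ enlarged only by those surplus rays reachable from $l_1$, whereas $|C(l_{d_{\mathrm{beam}}})|(\omega)$ is that same cluster enlarged by \emph{all} open surplus rays. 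The former family being contained in the latter, the coupling $(\,|C(l_1)|(\tilde\omega),\,|C(l_{d_{\mathrm{beam}}})|(\omega)\,)$ realises the inequality almost surely, which is precisely the required $\preceq_{st}$. (Staying within the paper's style, one can instead obtain the same inequality by expanding \eqref{eq:jointpgf-h} along the beam and invoking the thinning bound \eqref{eq:psiXstoX}, the surplus rays playing the role of the factor displaced from the outer to the inner composition.) Chaining the single steps then gives $\mathcal{T}^{\{m_1;n_1\}}\preceq_{sha}\mathcal{T}^{\{m_2;n_2\}}$.
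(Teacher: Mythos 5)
Your proposal is correct, and its skeleton matches the paper's proof: both reduce the statement to relocating one ray-like vertex at a time and invoking transitivity of $\preceq_{sha}$ (your monotone chain $\{m_1;n_1\},\{m_1-1;n_1+1\},\ldots,\{m_2;n_2\}$ after the reflection WLOG is the same reduction the paper phrases as the two directional cases $a^{*}\leq b$ and $a^{*}\geq b$), both identify the same residual subtree $\tau^*$, and both feed the same criterion $H_{l_1}^{\tau^*_{l_1}}\preceq_{st}H_{l_{d_{\mathrm{beam}}}}^{\tau^*_{l_{d_{\mathrm{beam}}}}}$ into Theorem~\ref{th:ConvexOrderMShape}. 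Where you genuinely diverge is in how that dominance is established. The paper stays on the pgf side: it factors $\eta_{l_1}^{\tau^*_{l_1}}(t\vecun{l_1})$ and $\eta_{l_{d_{\mathrm{beam}}}}^{\tau^*_{l_{d_{\mathrm{beam}}}}}(t\vecun{l_{d_{\mathrm{beam}}}})$ as in \eqref{eq:proofTool3-1}--\eqref{eq:proofTool3-2}, identifies the two symmetric beam pgfs, and infers the ordering from the inside-versus-outside placement of the ray factors, in the spirit of \eqref{eq:psiXstoX} --- which is exactly the alternative you sketch in your closing parenthesis. You instead use the percolation reading of $\eta_{v}^{\tau^*_v}(t\vecun{v})$ as the cluster-size pgf of independent bond percolation with retention probability $\alpha$ (this identification is correct: setting all arguments to $t$ in \eqref{eq:jointpgf-h} reproduces the cluster-size recursion, and cluster size is root-free), and you build an explicit almost-sure coupling: reflect the configuration on the symmetric part $G_{\mathrm{sym}}$, keep the $q-p$ surplus-ray edges fixed, and note that the surplus rays enter the cluster of $l_1$ in the reflected configuration only on the event that the beam connection $l_1\leftrightarrow l_{d_{\mathrm{beam}}}$ is open, whereas they enter the cluster of $l_{d_{\mathrm{beam}}}$ in the original configuration unconditionally. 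This buys a pointwise, fully explicit verification of the $\preceq_{st}$ step --- arguably tighter than the paper's terse "we infer", and it isolates precisely where the hypothesis $|m_1-n_1|\leq|m_2-n_2|$ (i.e.\ $p<q$ after the WLOG) is used; the paper's route, in exchange, stays entirely within the $\psi$-composition machinery it has already set up and requires no new probabilistic construction.
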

			
			\begin{proof} 
				Let $\tau^*=\mathcal{T}^{\{a^{*}; b\}}$, with $a^{*}=a-1$. We establish stochastic dominance ordering between $H_{l_1}^{\tau^*_{l_1}}$ and $H_{l_{d_{\mathrm{beam}}}}^{\tau^*_{l_{d_{\mathrm{beam}}}}}$ with all dependence parameters being $\alpha$, $\alpha\in(0,1)$. Let us denote by $\tau^{\prime\prime}$ the beam subtree with the additional subtrees connected to it. From (\ref{eq:jointpgf-h}), we have
				\begin{align}
					&\eta_{l_1}^{\tau^*_{l_1}}(t\vecun{l_1}) = (1-\alpha + \alpha t)^{a^{*}}\eta_{l_1}^{\tau^{\prime\prime}_{l_1}}(t,t,\ldots, \underbrace{t(1-\alpha+\alpha t)^b}_{\text{position }l_{d_{\mathrm{beam}}}},\ldots, t, t);
					\label{eq:proofTool3-1}\\
				&\eta_{l_{d_{\mathrm{beam}}}}^{\tau^*_{l_{d_{\mathrm{beam}}}}}(t\vecun{l_{d_{\mathrm{beam}}}}) = (1-\alpha + \alpha t)^b\eta_{l_{d_{\mathrm{beam}}}}^{\tau^{\prime\prime}_{l_{d_{\mathrm{beam}}}}}((t,t,\ldots,  \underbrace{t(1-\alpha+\alpha t)^{a^{*}}}_{\text{position }l_1},\ldots,t,t),
					\label{eq:proofTool3-2}
				\end{align}
				for $t\in[-1,1]$. By the symmetry of $\tau^{\prime\prime}$, one has $\eta_{l_1}^{\tau^{\prime\prime}_{l_1}}(t\vecun{l_1}) = \eta_{l_{d_{\mathrm{beam}}}}^{\tau^{\prime\prime}_{l_{d_{\mathrm{beam}}}}}(t\vecun{l_{d_{\mathrm{beam}}}})$. Therefore, from (\ref{eq:proofTool3-1}) and (\ref{eq:proofTool3-2}), we infer that $H_{l_1}^{\tau^*_{l_1}}\preceq_{st}H_{l_{d_{\mathrm{beam}}}}^{\tau^*_{l_{d_{\mathrm{beam}}}}}$ as long as $a^*\leq b$, and vice-versa. Consequently, $\mathcal{T}^{\{a^{*}+1;b\}}\preceq_{sha} \mathcal{T}^{\{a^{*}; b+1\}}$ if $a^{*}\leq b$, and $\mathcal{T}^{\{a^{*}; b+1\}}\preceq_{sha} \mathcal{T}^{\{a^{*}+1;b\}}$ if $a^{*}\geq b$, according to Theorem~\ref{th:ConvexOrderMShape}. The result follows from the transitivity of $\preceq_{sha}$.
			\end{proof}

\begin{figure}[H]
					\centering
					\begin{tikzpicture}[every node/.style={text = black, draw=none, fill = none, text opacity = 1, rectangle, inner sep=0mm, outer sep=1mm}, node distance = 5mm, scale=0.1, thick]
						\node(a0){
							\begin{tikzpicture}[every node/.style={circle, draw = Goldenrod!85!black, inner sep=0.1mm,   minimum size=2mm, fill = White, very thick, outer sep = 0.1mm}, node distance = 1mm, scale=0.1 , thick]
								\node [draw=  Blue!50!black, minimum size=2.5mm](1) {\tiny $l_1$};
								\node [draw=  Blue!50!black, minimum size=2.5mm, right=of 1] (2) {\tiny $l_2$};
								\node [draw=  Blue!50!black, minimum size=2.5mm, right=of 2] (3) {\tiny $l_3$};
								\node [draw=  Blue!50!black, minimum size=2.5mm, right=of 3] (6) {\tiny $l_4$};
								\node [draw=  Blue!50!black, minimum size=2.5mm, right=of 6] (7) {\tiny $l_5$};
								\node [draw = White, below left = of 1](11){...};
								\node [above = of 1](8){};
								\node [above left = of 1](9){};
								\node [left = of 1](10){};
								\node [below = of 1](12){};
								\node[draw = Red!75, minimum size=2mm, above = of 2](sub1){};
								\node[draw = Red!75, minimum size=2mm, above = of sub1](sub2){};
								\node[draw = Red!75, minimum size=2mm, above = of 3](sub3){};
								\node[draw = Red!75, minimum size=2mm, above = of 6](sub4){};
								\node[draw = Red!75, minimum size=2mm, above = of sub4](sub5){};

								\draw (1) --  (2);
								\draw (2) -- (3);
								\draw (3) -- (6);
								\draw (6) -- (7);
								\draw (1) -- (8);
								\draw (1) -- (9);
								\draw (1) -- (10);
								\draw (1) -- (11);
								\draw (1) -- (12);
								\draw (2) -- (sub1);
								\draw (sub1) -- (sub2);
								\draw (3) -- (sub3);
								\draw (6) -- (sub4);
								\draw (sub4) -- (sub5);
							\end{tikzpicture}
						};
						\node[below = 1mm of a0]{$\mathcal{T}^{\{d_{\mathrm{ray}};0\}}$};
						\node[right= of a0](a1){
							\begin{tikzpicture}[every node/.style={circle, draw = Goldenrod!85!black, inner sep=0.1mm,   minimum size=2mm, fill = White, very thick, outer sep = 0.1mm}, node distance = 1mm, scale=0.1 , thick]
								\node [draw=  Blue!50!black](1) {\tiny $l_1$};
								\node [draw=  Blue!50!black, right=of 1] (2) {\tiny $l_2$};
								\node [draw=  Blue!50!black, right=of 2] (3) {\tiny $l_3$};
								\node [draw=  Blue!50!black, right=of 3] (6) {\tiny $l_4$};
								\node [draw=  Blue!50!black, right=of 6] (7) {\tiny $l_5$};
								\node [draw = White, below left=of 1](11){...};
								\node [below = of 7](8){};
								\node [above left = of 1](9){};
								\node [left = of 1](10){};
								\node [below = of 1](12){};
								
								\node[draw = Red!75, minimum size=2mm, above = of 2](sub1){};
								\node[draw = Red!75, minimum size=2mm, above = of sub1](sub2){};
								\node[draw = Red!75, minimum size=2mm, above = of 3](sub3){};
								\node[draw = Red!75, minimum size=2mm, above = of 6](sub4){};
								\node[draw = Red!75, minimum size=2mm, above = of sub4](sub5){};
								
								\draw (1) --  (2);
								\draw (2) -- (3);
								\draw (3) -- (6);
								\draw (6) -- (7);
								\draw (7) -- (8);
								\draw (1) -- (9);
								\draw (1) -- (10);
								\draw (1) -- (11);
								\draw (1) -- (12);
								\draw (2) -- (sub1);
								\draw (sub1) -- (sub2);
								\draw (3) -- (sub3);
								\draw (6) -- (sub4);
								\draw (sub4) -- (sub5);
							\end{tikzpicture}
						};
						\node[below = 1mm of a1]{$\mathcal{T}^{\{d_{\mathrm{ray}}-1;1\}}$};
						\node[right= of a1](a2){
							\begin{tikzpicture}[every node/.style={circle, draw = Goldenrod!85!black, inner sep=0.1mm,   minimum size=2mm, fill = White, very thick, outer sep = 0.1mm}, node distance = 1mm, scale=0.1 , thick]
								\node [draw=  Blue!50!black, minimum size=2.5mm](1) {\tiny $l_1$};
								\node [draw=  Blue!50!black, minimum size=2.5mm, right=of 1] (2) {\tiny $l_2$};
								\node [draw=  Blue!50!black, minimum size=2.5mm, right=of 2] (3) {\tiny $l_3$};
								\node [draw=  Blue!50!black, minimum size=2.5mm, right=of 3] (6) {\tiny $l_4$};
								\node [draw=  Blue!50!black, minimum size=2.5mm, right=of 6] (7) {\tiny $l_5$};
								\node [draw = White, below left =of 1](11){...};
								\node [below = of 7](8){};
								\node [below right = of 7](9){};
								\node [left = of 1](10){};
								\node [below = of 1](12){};
								
								\node[draw = Red!75, minimum size=2mm, above = of 2](sub1){};
								\node[draw = Red!75, minimum size=2mm, above = of sub1](sub2){};
								\node[draw = Red!75, minimum size=2mm, above = of 3](sub3){};
								\node[draw = Red!75, minimum size=2mm, above = of 6](sub4){};
								\node[draw = Red!75, minimum size=2mm, above = of sub4](sub5){};

								\draw (1) --  (2);
								\draw (2) -- (3);
								\draw (3) -- (6);
								\draw (6) -- (7);
								\draw (7) -- (8);
								\draw (7) -- (9);
								\draw (1) -- (10);
								\draw (1) -- (11);
								\draw (1) -- (12);
								\draw (2) -- (sub1);
								\draw (sub1) -- (sub2);
								\draw (3) -- (sub3);
								\draw (6) -- (sub4);
								\draw (sub4) -- (sub5);
							\end{tikzpicture}
						};
						\node[below = 1mm of a2]{$\mathcal{T}^{\{d_{\mathrm{ray}}-2;2\}}$};
						\node[right= of a2](dots){\dots
						};
						
						\node[right= of dots](a4){
							\begin{tikzpicture}[every node/.style={circle, draw = Goldenrod!85!black, inner sep=0.1mm,   minimum size=2mm, fill = White, very thick, outer sep = 0.1mm}, node distance = 1mm, scale=0.1 , thick]
								\node [draw=  Blue!50!black, minimum size=2.5mm](1) {\tiny $l_1$};
								\node [draw=  Blue!50!black, minimum size=2.5mm, right=of 1] (2) {\tiny $l_2$};
								\node [draw=  Blue!50!black, minimum size=2.5mm, right=of 2] (3) {\tiny $l_3$};
								\node [draw=  Blue!50!black, minimum size=2.5mm, right=of 3] (6) {\tiny $l_4$};
								\node [draw=  Blue!50!black, minimum size=2.5mm, right=of 6] (7) {\tiny $l_5$};
								\node [draw = White, below right=of 7](11){...};
								\node [above = of 7](8){};
								\node [above right = of 7](9){};
								\node [right = of 7](10){};
								\node [below = of 7](12){};
								
								\node[draw = Red!75, minimum size=2mm, above = of 2](sub1){};
								\node[draw = Red!75, minimum size=2mm, above = of sub1](sub2){};
								\node[draw = Red!75, minimum size=2mm, above = of 3](sub3){};
								\node[draw = Red!75, minimum size=2mm, above = of 6](sub4){};
								\node[draw = Red!75, minimum size=2mm, above = of sub4](sub5){};

								\draw (1) --  (2);
								\draw (2) -- (3);
								\draw (3) -- (6);
								\draw (6) -- (7);
								\draw (7) -- (8);
								\draw (7) -- (9);
								\draw (7) -- (10);
								\draw (7) -- (11);
								\draw (7) -- (12);
								\draw (2) -- (sub1);
								\draw (sub1) -- (sub2);
								\draw (3) -- (sub3);
								\draw (6) -- (sub4);
								\draw (sub4) -- (sub5);
							\end{tikzpicture}
						};
						\node[below = 1mm of a4]{$\mathcal{T}^{\{0;d_{\mathrm{ray}}\}}$};

						\draw[very thick] (a0) -- (a1);	
						\draw[very thick] (a1) -- (a2);	
						\draw[very thick] (a2) -- (dots);	
						\draw[very thick] (dots) -- (a4);		
					\end{tikzpicture}
					\caption{Example of a sequence of trees encompassed by Corollary~\ref{th:Tool3}.  The moving vertices are in yellow (light shade), the beam subtree is in blue (dark shade), and the additional subtrees attached to it are in orange (mid shade).}
					\label{fig:Reconstr3}
				\end{figure}
   
							We depict in Figure~\ref{fig:Reconstr3} an example of a sequence $\{\mathcal{T}^{\{m;n\}},\, m,n\in\mathbb{N}:m+n=d_{\mathrm{ray}}\}$ with $d_{\mathrm{beam}}=5$, as encompassed by Corollary~\ref{th:Tool3}.
			Combining the use of Corollaries~\ref{th:Tool1}, \ref{th:Tool2} and \ref{th:Tool3} offers wide possibilities of comparison, as exhibited in the following example. 
			
			\begin{ex}
				Assuming a set of $d=9$ vertices, let $\mathcal{T}=$ \begin{tikzpicture}[every node/.style={text=White, text opacity=0, circle, draw = Blue!50!black, inner sep=0mm,   minimum size=1mm, fill = White}, node distance = 1mm, scale=0.1, thick]
					\node (1) {};
					\node [right=of 1] (2) {};
					\node [right =of 2] (3) {};
					\node [right =of 3] (4) {};
					\node [right=of 4] (5) {};
					\node [above right= of 5] (6) {};
					\node [below right =of 5] (7){};
					\node [right=of 6](8){};
					\node[right=of 7](9){};
					
					\draw (1) --  (2);
					\draw (2) -- (3);
					\draw (3) -- (4);
					\draw (4) -- (5);
					\draw (5) -- (6);
					\draw (5) -- (7);
					\draw (6) -- (8);
					\draw (7) -- (9);
				\end{tikzpicture} and $\mathcal{T}^{\prime}=$ 
				\begin{tikzpicture}[every node/.style={text=White, text opacity=0, circle, draw = Blue!50!black, inner sep=0mm,   minimum size=1mm, fill = White}, node distance = 1mm, scale=0.1, thick]
					\node (1) {};
					\node [right=of 1] (2) {};
					\node [right=of 2] (3) {};
					\node [above left=of 1] (4) {};
					\node [below left=of 1] (5) {};
					\node [above= of 2] (6) {};
					\node [below=of 2] (7){};
					\node [above right=of 3](8){};
					\node[below right=of 3](9){};
					
					\draw (1) --  (2);
					\draw (2) -- (3);
					\draw (1) -- (4);
					\draw (1) -- (5);
					\draw (2) -- (6);
					\draw (2) -- (7);
					\draw (3) -- (8);
					\draw (3) -- (9);
				\end{tikzpicture}. By first applying  Corollary~\ref{th:Tool3}, then Corollary~\ref{th:Tool1}, and finally Corollary~\ref{th:Tool2}, as shown in Figure~\ref{fig:ExampleTools}, we establish $\mathcal{T}\preceq_{sha}\mathcal{T}^{\prime}$. Note that this ordering agrees with the Hasse diagram depicted in Figure~\ref{fig:ShapesBonanza2}.
				\hfill $\square$
			\end{ex}
			
			\begin{figure}[H]
				\centering
				\begin{tikzpicture}[every node/.style={text = black, draw=none, fill = none, text opacity = 1, rectangle, inner sep=0mm, outer sep=1mm}, node distance = 1mm, scale=0.1, thick]
					\node(a0){
						\begin{tikzpicture}[every node/.style={circle, draw = Blue!50!black, inner sep=0mm,   minimum size=2mm, fill = White, outer sep = 0.1mm, very thick}, scale=0.1 , thick]
							\node (1) {};
							\node [right = of 1](2) {};
							\node [right = of 2](3) {};
							\node [right = of 3](4) {};
							\node [below = of 1](5) {};
							\node [below left = of 5] (6) {};
							\node [below right= of 5] (7) {};
							\node [below = of 6](8){};
							\node [below = of 7](9){};
							
							\draw (1) --  (2);
							\draw (2) -- (3);
							\draw (3) -- (4);
							\draw (1) -- (5);
							\draw (5) -- (6);
							\draw (5) -- (7);
							\draw (6) -- (8);
							\draw (7) -- (9);
						\end{tikzpicture}
					};
					\node[below = 1mm of a0]{$\mathcal{T}$};
					\node[right= of a0](sha0){$\substack{\preceq_{sha}\\\quad\\\scriptstyle{\text{Corollary~\ref{th:Tool3}}}}$
					};
					\node[right= of sha0](a1){
						\begin{tikzpicture}[every node/.style={circle, draw = Blue!50!black, inner sep=0mm,   minimum size=2mm, fill = White, outer sep = 0.1mm, very thick}, scale=0.1 , thick]
							\node (1) {};
							\node [right = of 1](2) {};
							\node [right = of 2](3) {};
							\node [right = of 3](4) {};
							\node [below = of 1](5) {};
							\node [below left = of 5] (6) {};
							\node [below right= of 5] (7) {};
							\node [below left= of 7](8){};
							\node [below right= of 7](9){};
							
							\draw (1) --  (2);
							\draw (2) -- (3);
							\draw (3) -- (4);
							\draw (1) -- (5);
							\draw (5) -- (6);
							\draw (5) -- (7);
							\draw (7) -- (8);
							\draw (7) -- (9);
						\end{tikzpicture}
					};
					\node[right= of a1](sha1){$\substack{\preceq_{sha}\\\quad\\\scriptstyle{\text{Corollary~\ref{th:Tool1}}}}$
					};
					\node[right= of sha1](a2){
						\begin{tikzpicture}[every node/.style={circle, draw = Blue!50!black, inner sep=0mm,   minimum size=2mm, fill = White, outer sep = 0.1mm, very thick},  scale=0.1, thick]
							\node (1) {};
							\node [right = of 1](2) {};
							\node [right = of 2](3) {};
							\node [below = of 1](5) {};
							\node [below left = of 5] (6) {};
							\node [below right= of 5] (7) {};
							\node [below left= of 7](8){};
							\node [below right= of 7](9){};
							\node [left = of 5](4) {};
							
							\draw (1) --  (2);
							\draw (2) -- (3);
							\draw (5) -- (4);
							\draw (1) -- (5);
							\draw (5) -- (6);
							\draw (5) -- (7);
							\draw (7) -- (8);
							\draw (7) -- (9);
						\end{tikzpicture}
					};
					\node[right= of a2](sha2){$\substack{\preceq_{sha}\\\quad\\\scriptstyle{\text{Corollary~\ref{th:Tool2}}}}$
					};
					\node[right= of sha2](a3){
						\begin{tikzpicture}[every node/.style={circle, draw = Blue!50!black, inner sep=0mm,   minimum size=2mm, fill = White, outer sep = 0.1mm, very thick},  scale=0.1 , thick]
							\node (1) {};
							\node [right = of 1](2) {};
							\node [right = of 2](3) {};
							\node [below = of 2](5) {};
							\node [below left = of 5] (6) {};
							\node [below right= of 5] (7) {};
							\node [below left= of 7](8){};
							\node [below right= of 7](9){};
							\node [left = of 5](4) {};
							
							\draw (1) --  (2);
							\draw (2) -- (3);
							\draw (5) -- (4);
							\draw (2) -- (5);
							\draw (5) -- (6);
							\draw (5) -- (7);
							\draw (7) -- (8);
							\draw (7) -- (9);
						\end{tikzpicture}
					};
					\node[below = 1mm of a3]{$\mathcal{T}^{\prime}$};
				\end{tikzpicture}
				\caption{A combined use of Corollaries~\ref{th:Tool1}, \ref{th:Tool2} and \ref{th:Tool3} to order using $\preceq_{sha}$}
				\label{fig:ExampleTools}
			\end{figure}

			Bases of comparison for trees already exist in the (spectral) graph theory literature; however, they have not been explored, to our knowledge, in the context of MRFs. In this section, we look at a few of them and provide discussion and examples showing their distinctiveness with $\preceq_{sha}$. 
			
			\bigskip
			
			The \textit{degree} of a vertex is the number of edges connected to it. Let $\boldsymbol{\pi}^{\mathcal{T}}$ denote the decreasingly ordered vector of degrees of a tree $\mathcal{T}$. Trees may be gathered in categories following their respective vector $\boldsymbol{\pi}^{\mathcal{T}}$. Ordering between two trees of different categories, say $\mathcal{T}$ and $\mathcal{T}^{\prime}$, is established if $\sum_{i=1}^k\pi_{i}^{\mathcal{T}}\leq\sum_{i=1}^k\pi_{i}^{\mathcal{T}^{\prime}}$, for all $k\in\{1,\ldots,d\}$. Such a comparison is called \textit{majorization}; see \cite{marshall2011inequalities}. Thus categorizing and comparing trees have been prominently studied in the context of spectral graph theory, notably in \cite{liu2009majorization} and subsequent work. Majorization of vectors of degrees does not produce the order given by $\preceq_{sha}$. On one hand, trees having the same $\boldsymbol{\pi}^{\mathcal{T}}$ may still be ordered according to $\preceq_{sha}$; $\mathcal{T}^{[2]}$ and $\mathcal{T}^{[3]}$ of Figure~\ref{fig:Reconstr2} are an example. On the other hand, a tree whose vector of degrees majorizes another's may be incomparable according to $\preceq_{sha}$; take $\mathcal{T}$ and $\mathcal{T}^{\prime}$ of Figure~\ref{fig:CounterEx} as an example. 
			
			\bigskip
			
			Another ground of comparison of trees is the algebraic connectivity, introduced in \cite{fiedler1973algebraic}. Algebraic connectivity corresponds to the second smallest eigenvalue of a graph's Laplacian matrix, defined as the identity matrix minus the adjacency matrix; one may consult \cite{grone1987algebraic} for further insight in the context of trees.
			Comparing with respect to $\preceq_{sha}$ is distinct than ordering trees according to algebraic connectivity as investigated in \cite{grone1990ordering}. A glance at Table~1 therein and Figure~\ref{fig:ShapesBonanza2} for $d=7$ allows to visualize the distinction.

			Spectral graph theory provides other means of comparison of trees,
			notably employing the spectral radius or the Estrada index.
			Denote the set of eigenvalues of the adjacency matrix of a graph by $\{\mu_i,\, i\in\{1,\ldots, d\}\}$. Then, the spectral radius of that graph is given by $\rho_{\mathrm{spec}}(\mathcal{T}) = \max_{i=1}^d \mu_i$ and its Estrada index by $EE(\mathcal{T})=\sum_{i=1}^d \mathrm{e}^{\mu_i}$. One may consult \cite{gutman2011estrada} for a review on the Estrada index.
			Exhaustive computations showed that, for $d\in\{4,\ldots,8\}$, $\mathcal{T}\preceq_{sha}\mathcal{T}^{\prime}$ yields $\rho_{\mathrm{spec}}(\mathcal{T})\leq\rho_{\mathrm{spec}}(\mathcal{T}^{\prime})$ and $EE(\mathcal{T}) \leq EE(\mathcal{T}^{\prime})$. This remains mostly true for larger number of vertices, but exceptions surface, as exhibited in Example~\ref{ex:SpectrRadius-Estrada}.  Moreover, referring to Figure~\ref{fig:ShapesBonanza2}, we observe \begin{tikzpicture}[every node/.style={text=White, text opacity=0, circle, draw = Blue!50!black, inner sep=0mm,   minimum size=0.7mm, fill = White}, node distance = 0.7mm, scale=0.1, thick]
				\node (1) {};
				\node [right=of 1] (2) {};
				\node [right =of 2] (3) {};
				\node [right =of 3] (4) {};
				\node [right=of 4] (5) {};
				\node [right= of 5] (6) {};
				\node [above right =of 6] (7){};
				\node [below right=of 6](8){};
				\node[above=of 5](9){};
				
				\draw (1) --  (2);
				\draw (2) -- (3);
				\draw (3) -- (4);
				\draw (4) -- (5);
				\draw (5) -- (6);
				\draw (6) -- (7);
				\draw (6) -- (8);
				\draw (5) -- (9);
			\end{tikzpicture} \,$\preceq_{sha}$\,\begin{tikzpicture}[every node/.style={text=White, text opacity=0, circle, draw = Blue!50!black, inner sep=0mm,   minimum size=0.7mm, fill = White}, node distance = 0.7mm, scale=0.1, thick]
				\node (1) {};
				\node [right=of 1] (2) {};
				\node [right =of 2] (3) {};
				\node [above right =of 3] (4) {};
				\node [right=of 4] (5) {};
				\node [below right= of 3] (6) {};
				\node [right =of 6] (7){};
				\node [above left=of 1](8){};
				\node[below left=of 1](9){};
				
				\draw (1) --  (2);
				\draw (2) -- (3);
				\draw (3) -- (4);
				\draw (4) -- (5);
				\draw (3) -- (6);
				\draw (6) -- (7);
				\draw (1) -- (8);
				\draw (1) -- (9);
			\end{tikzpicture}, while both trees' adjacency matrix have the same eigenvalues: they are cospectral. Tools from spectral graph theory, even apart from the spectral radius and the Estrada index, are therefore not sufficient to entirely describe the comparison dynamics at play in $(\Omega_{\mathcal{T}}, \preceq_{sha})$. 
			
			\begin{ex}
				\label{ex:SpectrRadius-Estrada}
				Consider trees $\mathcal{T}$ and $\mathcal{T}^{\prime}$ depicted in Figure~\ref{fig:SpectrRadius-Estrada}. They differ only by the anchorage of vertex $9$, connected to $3$ in one case and to $7$ in the other. From (\ref{eq:jointpgf-h}), for $\tau^*$ the subtree obtained by pruning $9$, we have
				\begin{align*}
					\eta_{3}^{\tau^*_3}(t\,\boldsymbol{1}_8) &= t(1-\alpha+\alpha t[1-\alpha+\alpha t])(1-\alpha+\alpha t[1-\alpha+\alpha t\{1-\alpha+\alpha t\}\{1-\alpha+\alpha t(1-\alpha+\alpha t)\}]) ;\\
					\eta_{7}^{\tau^*_7}(t\,\boldsymbol{1}_8) &= t\{1-\alpha+\alpha t\}\{1-\alpha+\alpha t(1-\alpha+\alpha t)(1-\alpha+\alpha t[1-\alpha+\alpha t\{1-\alpha+\alpha t(1-\alpha+\alpha t)\}])\}  ,
				\end{align*}
				which, when developed, indicates $H_{7}^{\tau^*_7}\preceq_{st}H_{3}^{\tau^*_3}$. Hence, $\mathcal{T}\preceq_{sha}\mathcal{T}^{\prime}$ by Theorem~\ref{th:ConvexOrderMShape}, although 
				$\rho_{\mathrm{spec}}(\mathcal{T})= 2.0840 > 2.0743 =\rho_{\mathrm{spec}}(\mathcal{T}^{\prime})$ and $EE(\mathcal{T})= 19.4594 > 19.4591 =EE(\mathcal{T}^{\prime})$.
			\end{ex}
			
			\begin{figure}[H]
				\centering
				\begin{tikzpicture}[every node/.style={text=Black, circle, draw = Blue!50!black, inner sep=0mm,   minimum size=3mm, fill = White}, node distance = 1.5mm, scale=0.1, very thick]
					\node (1a) {\tiny $1$};
					\node [right=of 1a] (2a) {\tiny $2$};
					\node [above right=of 2a] (3a) {\tiny $3$};
					\node [right=of 3a] (4a) {\tiny $4$};
					\node [right=of 4a] (5a) {\tiny $5$};
					\node [above=of 5a] (6a) {\tiny $6$};
					\node [right=of 5a] (7a) {\tiny $7$};
					\node [below right=of 7a] (8a) {\tiny $8$};
					\node [draw =Goldenrod!85!black, above right =of 7a] (9a) {\tiny $9$};

					\draw (1a) -- (2a);
					\draw (2a) -- (3a);
					\draw (3a) -- (4a);
					\draw (4a) -- (5a);
					\draw (5a) -- (6a);
					\draw (5a) -- (7a);
					\draw (7a) -- (8a);
					\draw (7a) -- (9a);

					\node [right= 35mm of 1a](1b) {\tiny $1$};
					\node [right=of 1b] (2b) {\tiny $2$};
					\node [above right=of 2b] (3b) {\tiny $3$};
					\node [right=of 3b] (4b) {\tiny $4$};
					\node [right=of 4b] (5b) {\tiny $5$};
					\node [above=of 5b] (6b) {\tiny $6$};
					\node [right=of 5b] (7b) {\tiny $7$};
					\node [below right=of 7b] (8b) {\tiny $8$};
					\node [draw =Goldenrod!85!black, above left =of 3b] (9b) {\tiny $9$};
					
					\draw (1b) -- (2b);
					\draw (2b) -- (3b);
					\draw (3b) -- (4b);
					\draw (4b) -- (5b);
					\draw (5b) -- (6b);
					\draw (5b) -- (7b);
					\draw (7b) -- (8b);
					\draw (3b) -- (9b);
					
					\node[draw=none, below = 4mm of 4a](t1){$\mathcal{T}^{\mathcolor{White}{\prime}}$};
					\node[draw=none, below= 4mm of 4b](t2){$\mathcal{T}^{\prime}$};
				\end{tikzpicture}
				\caption{Trees $\mathcal{T}$ and $\mathcal{T}^{\prime}$ from Example~\ref{ex:SpectrRadius-Estrada}}
				\label{fig:SpectrRadius-Estrada}
			\end{figure}

			In \cite{csikvari2010poset}, the author introduces a transformation on trees, the \textit{generalized tree shift}, and designs a poset according to this transformation. This idea was further explored in \cite{csikvari2013poset}. According to this poset, a tree $\mathcal{T}$ precedes another tree $\mathcal{T}^{\prime}$, which we denote $\mathcal{T}\preceq_{gts}\mathcal{T}^{\prime}$, if $\mathcal{T}^{\prime}$ is the image of a generalized tree shift on $\mathcal{T}$. This gives the poset $(\Omega_{\mathcal{T}},\preceq_{gts})$. Roughly, $(\Omega_{\mathcal{T}}, \preceq_{gts})$ is coarser than $(\Omega_{\mathcal{T}}, \preceq_{sha})$, that is, many comparisons with respect to $\preceq_{sha}$ cannot be established according to $\preceq_{gts}$.  
			Corollaries~\ref{th:Tool2} and~\ref{th:Tool3} provide evidence of that: in sequences of trees encompassed by either, intermediary trees  cannot be obtained from one another through a generalized tree shift. Conversely,         
			most generalized tree shifts are reproducible through successive applications of Theorem~\ref{th:ConvexOrderMShape}, and thus, in most cases, $\mathcal{T}\preceq_{gts}\mathcal{T}^{\prime}$ implies $\mathcal{T}\preceq_{sha}\mathcal{T}^{\prime}$. 
			Evidence suggests this is not true in general, however. Figure~\ref{fig:GTS} depicts two trees, $\mathcal{T}$ and $\mathcal{T}^{\prime}$, such that $\mathcal{T}^{\prime}$ is the result of a generalized tree shift on $\mathcal{T}$, meaning $\mathcal{T}\preceq_{gts}\mathcal{T}^{\prime}$. However, for every $\tau^*$ obtained by pruning one of the subtrees in orange (mid shade) or yellow (light shade) from $\mathcal{T}$, the condition $H_v^{\tau^*_v}\preceq_{st}H_w^{\tau^*_w}$ is not met, suggesting incomparability according to $\preceq_{sha}$ since no iterative reconstruction seems to permit comparison. It still seems reasonable to believe $M\preceq_{cx}M^{\prime}$, with both respectively defined on $\mathcal{T}$ and $\mathcal{T}^{\prime}$, and their dependence parameters identical for every edge. Figure~\ref{fig:GTS} hence provides an example illustrating that the need for trees to differ only by one edge to meet the conditions of Theorem~\ref{th:ConvexOrderMShape} may be limiting. Further results on convex orderings could possibly refine the poset $(\Omega_{\mathcal{T}},\preceq_{sha})$ by permitting comparisons unreachable with the sole result of Theorem~\ref{th:ConvexOrderMShape} and iterative reconstructions. Recall that the condition $H_v^{\tau^*_v}\preceq_{st}H_w^{\tau^*_w}$ is simply sufficient, not necessary. In that prospect, the implication $\mathcal{T}\preceq_{gts}\mathcal{T}^{\prime}\implies\mathcal{T}\preceq_{sha}\mathcal{T}^{\prime}$ could become valid. 
			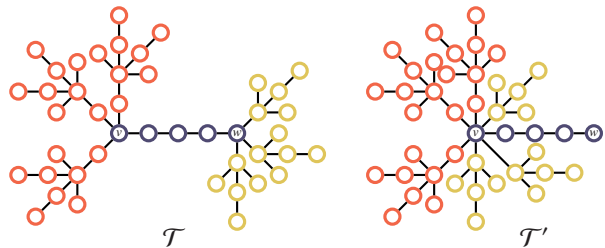
\begin{figure}[H]
				\centering
				\begin{tikzpicture}[every node/.style={text=Black, circle, draw = Blue!50!black, inner sep=0mm,  minimum size=2mm, fill = White, very thick}, node distance = 1.5mm, scale=0.1, thick]
					\node (1a) {\tiny $v$};
					\node [right=of 1a] (2a) {};
					\node [right=of 2a] (mida) {};
					\node [right=of mida] (midmida) {};
					\node [right=of midmida] (3a) {\tiny $w$};
					\node [draw =Red!75, above left = of 1a] (4a) {};
					\node [draw =Red!75, above left =of 4a] (5a) {};
					\node [draw =Red!75, above =of 5a] (6a) {};
					\node [draw =Red!75, above left =of 5a] (7a) {};
					\node [draw =Red!75, left =of 5a] (8a) {};
					\node [draw =Red!75, below left =of 5a] (9a) {};
					\node [draw =Red!75, above = of 1a] (10a) {};
					\node [draw =Red!75, above = of 10a] (11a) {};
					\node [draw =Red!75, right = of 11a] (12a) {};
					\node [draw =Red!75, above right = of 11a] (13a) {};
					\node [draw =Red!75, above = of 11a] (14a) {};
					\node [draw =Red!75, above left = of 11a] (15a) {};
					\node [draw =Red!75, below left = of 1a] (16a) {};
					\node [draw =Red!75, below left = of 16a] (17a) {};
					\node [draw =Red!75, left = of 17a] (18a) {};
					\node [draw =Red!75, below left = of 17a] (19a) {};
					\node [draw =Red!75, below = of 17a] (20a) {};
					\node [draw =Red!75, above left = of 17a] (21a) {};
					\node [draw =   Goldenrod!85!black, below = of 3a] (22a) {};
					\node [draw =   Goldenrod!85!black, below = of 22a] (23a) {};
					\node [draw =   Goldenrod!85!black, below left = of 22a] (24a) {};
					\node [draw =   Goldenrod!85!black, below right = of 22a] (25a) {};
					\node [draw =   Goldenrod!85!black, below = of 23a] (26a) {};
					\node [draw =   Goldenrod!85!black, below right = of 3a] (27a) {};
					\node [draw =   Goldenrod!85!black, below right = of 27a] (28a) {};
					\node [draw =   Goldenrod!85!black, right = of 27a] (29a) {};
					\node [draw =   Goldenrod!85!black, above right = of 27a] (30a) {};
					\node [draw =   Goldenrod!85!black, right = of 29a] (31a) {};
					\node [draw =   Goldenrod!85!black, above right = of 3a] (32a) {};
					\node [draw =   Goldenrod!85!black, above right = of 32a] (33a) {};
					\node [draw =   Goldenrod!85!black, above = of 32a] (34a) {};
					\node [draw =   Goldenrod!85!black, right = of 32a] (35a) {};
					\node [draw =   Goldenrod!85!black, above right = of 33a] (36a) {};
					\node [draw =Red!75, above right = of 13a] (37a) {};
					\node [draw =Red!75, above = of 14a] (38a) {};
					\node [draw =Red!75, left = of 18a] (39a) {};
					\node [draw =Red!75, below left = of 19a] (40a) {};
					\node [draw =Red!75, above left = of 7a] (41a) {};
					\node [draw =Red!75, left = of 8a] (42a) {};
					
					\draw (1a) -- (2a);
					\draw (2a) -- (mida);
					\draw (mida) -- (midmida);
					\draw (midmida) -- (3a);
					\draw (1a) -- (4a);
					\draw (4a) -- (5a);            
					\draw (5a) -- (6a);       
					\draw (5a) -- (7a);
					\draw (5a) -- (8a);
					\draw (5a) -- (9a);
					\draw (1a) -- (10a);
					\draw (10a) -- (11a);
					\draw (11a) -- (12a);
					\draw (11a) -- (13a);
					\draw (11a) -- (14a);
					\draw (11a) -- (15a);
					\draw (1a) -- (16a);
					\draw (16a) -- (17a);
					\draw (17a) -- (18a);
					\draw (17a) -- (19a);
					\draw (17a) -- (20a);
					\draw (17a) -- (21a);
					\draw (3a) -- (22a);
					\draw (22a) -- (23a);
					\draw (22a) -- (24a);
					\draw (22a) -- (25a);
					\draw (23a) -- (26a);
					\draw (3a) -- (27a);
					\draw (27a) -- (28a);
					\draw (27a) -- (29a);
					\draw (27a) -- (30a);
					\draw (29a) -- (31a);
					\draw (3a) -- (32a);
					\draw (32a) -- (33a);
					\draw (32a) -- (34a);
					\draw (32a) -- (35a);
					\draw (33a) -- (36a);
					\draw (13a) -- (37a);
					\draw (14a) -- (38a);
					\draw (18a) -- (39a);
					\draw (19a) -- (40a);
					\draw (7a) -- (41a);
					\draw (8a) -- (42a);
					
					\node [right = 45mm of 1a](1b) {\tiny $v$};
					\node [right=of 1b] (2b) {};
					\node [right=of 2b] (midb) {};
					\node [right=of midb] (midmidb) {};
					\node [right=of midmidb] (3b) {\tiny $w$};
					\node [draw =Red!75, above left = of 1b] (4b) {};
					\node [draw =Red!75, above left =of 4b] (5b) {};
					\node [draw =Red!75, above =of 5b] (6b) {};
					\node [draw =Red!75, above left =of 5b] (7b) {};
					\node [draw =Red!75, left =of 5b] (8b) {};
					\node [draw =Red!75, below left =of 5b] (9b) {};
					\node [draw =Red!75, above = of 1b] (10b) {};
					\node [draw =Red!75, above = of 10b] (11b) {};
					\node [draw =Red!75, left = of 11b] (12b) {};
					\node [draw =Red!75, above right = of 11b] (13b) {};
					\node [draw =Red!75, above = of 11b] (14b) {};
					\node [draw =Red!75, above left = of 11b] (15b) {};
					\node [draw =Red!75, below left = of 1b] (16b) {};
					\node [draw =Red!75, below left = of 16b] (17b) {};
					\node [draw =Red!75, left = of 17b] (18b) {};
					\node [draw =Red!75, below left = of 17b] (19b) {};
					\node [draw =Red!75, below = of 17b] (20b) {};
					\node [draw =Red!75, above left = of 17b] (21b) {};
					\node [draw =   Goldenrod!85!black, below = of 1b] (22b) {};
					\node [draw =   Goldenrod!85!black, below = of 22b] (23b) {};
					\node [draw =   Goldenrod!85!black, below left = of 22b] (24b) {};
					\node [draw =   Goldenrod!85!black, below right = of 22b] (25b) {};
					\node [draw =   Goldenrod!85!black, below = of 23b] (26b) {};
					\node [draw =   Goldenrod!85!black, below right = 5mm of 1b] (27b) {};
					\node [draw =   Goldenrod!85!black, below right = of 27b] (28b) {};
					\node [draw =   Goldenrod!85!black, right = of 27b] (29b) {};
					\node [draw =   Goldenrod!85!black, above right = of 27b] (30b) {};
					\node [draw =   Goldenrod!85!black, right = of 29b] (31b) {};
					\node [draw =   Goldenrod!85!black, above right = of 1b] (32b) {};
					\node [draw =   Goldenrod!85!black, above right = of 32b] (33b) {};
					\node [draw =   Goldenrod!85!black, above = of 32b] (34b) {};
					\node [draw =   Goldenrod!85!black, right = of 32b] (35b) {};
					\node [draw =   Goldenrod!85!black, above right = of 33b] (36b) {};
					\node [draw =Red!75, above left = of 15b] (37b) {};
					\node [draw =Red!75, above = of 14b] (38b) {};
					\node [draw =Red!75, left = of 18b] (39b) {};
					\node [draw =Red!75, below left = of 19b] (40b) {};
					\node [draw =Red!75, above left = of 7b] (41b) {};
					\node [draw =Red!75, left = of 8b] (42b) {};
					
					\draw (1b) -- (2b);
					\draw (2b) -- (midb);
					\draw (midb) -- (midmidb);
					\draw (midmidb) -- (3b);
					\draw (1b) -- (4b);
					\draw (4b) -- (5b);            
					\draw (5b) -- (6b);       
					\draw (5b) -- (7b);
					\draw (5b) -- (8b);
					\draw (5b) -- (9b);
					\draw (1b) -- (10b);
					\draw (10b) -- (11b);
					\draw (11b) -- (12b);
					\draw (11b) -- (13b);
					\draw (11b) -- (14b);
					\draw (11b) -- (15b);
					\draw (1b) -- (16b);
					\draw (16b) -- (17b);
					\draw (17b) -- (18b);
					\draw (17b) -- (19b);
					\draw (17b) -- (20b);
					\draw (17b) -- (21b);
					\draw (1b) -- (22b);
					\draw (22b) -- (23b);
					\draw (22b) -- (24b);
					\draw (22b) -- (25b);
					\draw (23b) -- (26b);
					\draw (1b) -- (27b);
					\draw (27b) -- (28b);
					\draw (27b) -- (29b);
					\draw (27b) -- (30b);
					\draw (29b) -- (31b);
					\draw (1b) -- (32b);
					\draw (32b) -- (33b);
					\draw (32b) -- (34b);
					\draw (32b) -- (35b);
					\draw (33b) -- (36b);
					\draw (15b) -- (37b);
					\draw (14b) -- (38b);
					\draw (18b) -- (39b);
					\draw (19b) -- (40b);
					\draw (7b) -- (41b);
					\draw (8b) -- (42b);
					
					\node[draw=none, below = 10mm of mida](t1){$\mathcal{T}^{\mathcolor{White}{\prime}}$};
					\node[draw=none, below= 10mm of midb](t2){$\mathcal{T}^{\prime}$};
				\end{tikzpicture}
				\caption{A generalized tree shift from $\mathcal{T}$ to $\mathcal{T}^{\prime}$}
				\label{fig:GTS}
			\end{figure}

				\section{Conclusion}
				\label{sect:Conclusion}

        The main contributions of this paper revolve around stochastic orderings in the context of the family $\mathbb{MPMRF}$. 
        Within a given tree, we have established
        supermodular orderings of synecdochic pairs, the pair given by a component of a random vector and its sum. This was done to assess if a component of the MRF contributes less or more than another to the sum,  depending on its corresponding vertex's position. Our findings on the ordering of synecdochic pairs have also found repercussions for allocation-related quantities, which invited their consideration as centrality indices. These findings were also key to deriving the result that allowed for establishing convex ordering of sums of MRFs defined on trees of different shapes. To thoroughly explore this latter result, we have designed a new partial order on trees and examined it exhaustively. This examination was done through the conception of Hasse diagrams, tools facilitating comparison along sequences of trees, and a discussion stemming from spectral graph theory on its relation to other poset of trees. Moreover, we have shown that the two stochastic orderings, $(N_v,M)\preceq_{sm}(N_w,M)$ and $M\preceq_{cx}M^{\prime}$, were two sides of the same coin, both linked through the condition $H_{v}^{\mathcal{T}_v}\preceq_{st}H_{w}^{\mathcal{T}_w}$, $v,w\in\mathcal{V}$. Our acquired comprehension of the poset $(\Omega_{\mathcal{T}},\preceq_{sha})$ thus transposes to both. 

        \bigskip

        One could dive deeper into the analysis of the poset $(\Omega_{\mathcal{T}}, \preceq_{sha})$ using graph-theoretic machinery. \cite{li2021poset} furthered the understanding of \cite{csikvari2010poset}'s poset, revealing results relating to the Wiener index and tree invariants' polynomials; likewise, we suspect more could be uncovered about $(\Omega_{\mathcal{T}}, \preceq_{sha})$ in connection with (spectral) graph theory. \cite{barghi2024ranking} discusses, among other things, the ordering of trees induced by some global closeness index
        and notably proves that the series tree [star tree] acts as its minimum [maximum] element. 
        Further analytical results on that ordering could perhaps, given $\preceq_{sha}$'s underlying relation to closeness, connect both in the same fashion that the ordering of synecdochic pairs from Theorem~\ref{th:OrderingAlloc} connected to centrality indices in Proposition~\ref{th:EffectsofSupermodularity}. Ergo, the conclusions of \cite{barghi2024ranking}'s comparative study could then transpose to $\preceq_{sha}$.

\begin{appendix}

				\section{Proof of Theorem \ref{th:OrderingAlloc}} \label{sect:proofConvexOrderMShape}
				The proof comprises two parts. In the first part, we dissect the joint distributions of $(N_v,M)$ and $(N_w,M)$ to provide an alternative stochastic representation of both vectors. In the second part, we rely on
				the mass transfer characterization of supermodular ordering given by \cite{meyer2015beyond} to establish comparison using this alternative representation. For a discussion on the relation tying stochastic orders to mass transfer, see \cite{muller2013duality}. Appendix~C of \cite{ansari2024supermodular} also relies on that characterization and provides further insight.
				Theorem~7 of \cite{cote2025tree} provides the expression of the pgf of $M$.
				Considering first $v$ as a root, we have 
				\begin{align}
					\mathcal{P}_M(t_m) &= \mathrm{e}^{\lambda\left(d-\sum_{e\in\mathcal{E}}\alpha_{e}\right) \left( \left(\sum_{j\in\mathcal{V}}\frac{1-\alpha_{(\mathrm{pa}(j),j)}}{d-\sum_{e\in\mathcal{E}}\alpha_{e}}\eta_{j}^{\mathcal{T}_v}(t\vecun{j})\right)-1\right)}\\
					&= \mathrm{e}^{\lambda\left(\eta_v^{\mathcal{T}_v}(t_m\vecun{v}) + \sum_{j\in\mathcal{V}\backslash\{v\}} (1-\alpha)\eta_{j}^{\mathcal{T}_v}(t_m\vecun{j})\right) - \lambda d (1-\alpha) }\notag\\
					&= \mathrm{e}^{\lambda\left(\sum_{k=0}^{\infty} \left(p_{H_v^{\mathcal{T}_v}}(k) + \sum_{j\in\mathcal{V}\backslash\{v\}} (1-\alpha)p_{H_j^{\mathcal{T}_v}}(k) \right) t_m^{k}\right) - \lambda d (1-\alpha) },\quad t_m\in[-1,1].
                \label{eq:sigmafinal1}
				\end{align}
				For notation purposes, let us define $\varsigma_k = p_{H_v^{\mathcal{T}_v}}(k) + \sum_{j\in\mathcal{V}\backslash\{v\}} (1-\alpha)p_{H_j^{\mathcal{T}_v}}(k)$, $k\in\mathbb{N}$. 
				Meanwhile, selecting $w$ instead as the root, we also have 
				\begin{equation}
					\mathcal{P}_M(t_m) = \mathrm{e}^{\lambda\left(\sum_{k=0}^{\infty} \left(p_{H_w^{\mathcal{T}_w}}(k) + \sum_{j\in\mathcal{V}\backslash\{w\}} (1-\alpha)p_{H_j^{\mathcal{T}_w}}(k) \right) t_m^{k}\right) - \lambda d (1-\alpha) },\quad t_m\in[-1,1].\label{eq:sigmafinal2}
				\end{equation}
				Since the choice of the root has no impact on the joint distribution of $\boldsymbol{N}$ (Theorem~2 of \cite{cote2025tree}) and due to the uniqueness of pgfs, the relation $\varsigma_k = p_{H_w^{\mathcal{T}_w}}(k) + \sum_{j\in\mathcal{V}\backslash\{w\}} (1-\alpha)p_{H_j^{\mathcal{T}_w}}(k)$ is valid for $k\in\mathbb{N}$ by comparing (\ref{eq:sigmafinal1}) and (\ref{eq:sigmafinal2}).
				We derive the joint pgfs of $(N_{v}, M)$ and $(N_{w}, M)$ from the joint pgf of $\boldsymbol{N}$ using Theorem~2.3 of \cite{blier2022generating} as follows
				\begin{align}
					\mathcal{P}_{N_{v}, M}(t_{v},t_m) &= \mathcal{P}_{\boldsymbol{N}}(t_m,t_m,\ldots, \underbrace{t_{v}t_m}_{\text{position } v}, \ldots, \underbrace{t_m}_{\text{position } w}, \ldots,t_m, t_m);
					\label{eq:OnBoitDuThe1}\\
					\mathcal{P}_{N_{w}, M}(t_{w},t_m) &= \mathcal{P}_{\boldsymbol{N}}(t_m,t_m,\ldots, \underbrace{t_m}_{\text{position } v}, \ldots, \underbrace{t_{w}t_m}_{\text{position } w}, \ldots,t_m, t_m),
					\label{eq:OnBoitDuThe2}
				\end{align}
				for $t_v,t_w,t_m\in[-1,1]$. From the expression of the joint pgf in (\ref{eq:JointPGF}), choosing $v$ as the root, relation \eqref{eq:OnBoitDuThe1} grants
				\begin{align}
					\mathcal{P}_{N_{v}, M}(t_{v},t_m)
					&= \mathrm{e}^{\lambda (t_{v} \eta^{\mathcal{T}_v}_v(t_m\vecun{v}) - 1)} \times \prod_{j\in\mathcal{V}\backslash\{v\}}\mathrm{e}^{\lambda(1-\alpha)(\eta_j^{\mathcal{T}_v}(t_m\vecun{j}) - 1)}\notag\\
					&= \mathrm{e}^{\lambda\left(\sum_{k=1}^{\infty}  p_{H_v^{\mathcal{T}_v}}(k) t_v t_m^k + \sum_{k=1}^{\infty}  \sum_{j\in\mathcal{V}\backslash\{v\}} (1-\alpha)p_{H_j^{\mathcal{T}_v}}(k)t_m^k \right)-\lambda d (1-\alpha)} \notag\\
					&= \mathrm{e}^{\lambda\left(\sum_{k=1}^{\infty} p_{H_v^{\mathcal{T}_v}}(k) t_v t_m^k + \sum_{k=1}^{\infty}   (\varsigma_k - p_{H_j^{\mathcal{T}_v}}(k)) t_m^k \right) - \lambda d(1-\alpha)},\quad t_v,t_m\in[-1,1].\label{eq:fgpNvM-1}
				\end{align}
				Similarly, choosing $w$ as the root and following the same steps,  (\ref{eq:OnBoitDuThe2}) yields
				\begin{equation}
					\mathcal{P}_{N_{w}, M}(t_{w},t_m) = \mathrm{e}^{\lambda\left(\sum_{k=1}^{\infty} p_{H_w^{\mathcal{T}_w}}(k) t_w t_m^k +  \sum_{k=1}^{\infty}  (\varsigma_k - p_{H_j^{\mathcal{T}_w}}(k)) t_m^k \right) - \lambda d(1-\alpha)},\quad t_w,t_m\in[-1,1].\label{eq:fgpNwM-1}
				\end{equation}
				
				To alleviate the notation for the remaining of the proof, define $a_k={p}_{H_v^{\mathcal{T}_v}}(k)$ and $b_k={p}_{H_w^{\mathcal{T}_w}}(k)$. Also, let $r=\sum_{k=0}^{\infty}{(b_k-a_k)\mathbb{1}_{\{a_k \leq b_k\}}}=\sum_{k=0}^{\infty}{(a_k-b_k)\mathbb{1}_{\{a_k \geq b_k\}}}$.
    One may note that $r$ corresponds to the total variation distance between $H_v^{\mathcal{T}_v}$ and $H_w^{\mathcal{T}_w}$.
    Consider a random variable $Y$ whose pgf is given by  
    \begin{equation}
				\mathcal{P}_{Y}(t) = \mathrm{e}^{(\lambda d (1-\alpha)-\lambda^{*})(\frac{1}{d(1-\alpha)-(1+r)}\sum_{k=0}^{\infty}(\varsigma_k - a_k - (b_k-a_k)\mathbb{1}_{\{a_k\leq b_k\}}t^k - 1)},\quad t\in[-1,1],
					\label{eq:pgfUpsilon}
				\end{equation}
     where $\lambda^{*} = \lambda(1+r)$.
    From the definitions of $a_k$, $b_k$ and $\varsigma_k$, one directly sees $\varsigma_k - a_k \geq 0$ and $\varsigma_k-b_k\geq 0$ for every $k\in\mathbb{N}$. Thus, the quantity $\varsigma_k - a_k - (b_k-a_k)\mathbb{1}_{\{a_k\leq b_k\}}$, which is equal to $\varsigma_k - b_k - (a_k-b_k)\mathbb{1}_{\{a_k\geq b_k\}}$, is nonnegative for every $k\in\mathbb{N}$. The pgf in (\ref{eq:pgfUpsilon}) is thus, indeed, a valid pgf.
    
    The joint pgf in (\ref{eq:fgpNvM-1}) then becomes  
				\begin{align}
					\mathcal{P}_{N_v,M}(t_v,t_m) 
					&= \mathrm{e}^{\lambda(\sum_{k=0}^{\infty} a_kt_vt^k_m + \sum_{k=0}^{\infty}(\varsigma_k-a_k)t^k_m) - \lambda d(1-\alpha)} \notag \\
					&= \mathrm{e}^{\lambda(\sum_{k=0}^{\infty} a_kt_vt^k_m  + \sum_{k=0}^{\infty}(b_k-a_k)\mathbb{1}_{\{a_k \leq b_k\}}t^k_m + \sum_{k=0}^{\infty}(\varsigma_k-a_k-(b_k-a_k)\mathbb{1}_{\{a_k\leq b_k\}})t^k_m)- \lambda d (1-\alpha)} \notag \\
					&= \mathrm{e}^{\lambda^{*}( \frac{1}{1+r}\left(\sum_{k=0}^{\infty} a_kt_vt^k_m  + \sum_{k=0}^{\infty}(b_k-a_k)\mathbb{1}_{\{a_k \leq b_k\}}t^k_m\right) -1)} \times \mathcal{P}_Y(t_m), \quad t_v,t_m\in[-1,1].
     \label{eq:jointpgf-NvM}
				\end{align}
				Likewise, (\ref{eq:fgpNwM-1}) becomes
				\begin{align}
					\mathcal{P}_{N_w,M}(t_w,t_m)
					&= \mathrm{e}^{\lambda^{*}( \frac{1}{1+r}\left(\sum_{k=0}^{\infty} b_kt_vt^k_m  + \sum_{k=0}^{\infty}(a_k-b_k)\mathbb{1}_{\{a_k \geq b_k\}}t^k_m\right) -1)} \times \mathcal{P}_Y(t_m) \quad t_w,t_m\in[-1,1], 
					\label{eq:jointpgf-NwM}
				\end{align}
				since $\varsigma_k - a_k - (b_k-a_k)\mathbb{1}_{\{a_k\leq b_k\}}=\varsigma_k - b_k - (a_k-b_k)\mathbb{1}_{\{a_k\geq b_k\}}$ for every $k\in\mathbb{N}$. 
				Next, let $(\ddot{I}_v, \ddot{H}_v)$ and $(\ddot{I}_w, \ddot{H}_w)$ be vectors of random variables whose joint pgfs are respectively given by
				\begin{align}
					\mathcal{P}_{\ddot{I}_v, \ddot{H}_v}(t_v,t_m) &= \frac{1}{1+r}\left(\sum_{k=0}^{\infty}a_kt_vt^k_m + \sum_{k=0}^{\infty}(b_k-a_k)\mathbb{1}_{\{a_k\leq b_k\}} t_m^k\right);
					\label{eq:jointIH-v}\\
					\mathcal{P}_{\ddot{I}_w, \ddot{H}_w}(t_w,t_m) &= \frac{1}{1+r}\left(\sum_{k=0}^{\infty}b_kt_wt^k_m + \sum_{k=0}^{\infty}(a_k-b_k)\mathbb{1}_{\{a_k\geq b_k\}} t_m^k\right),
					\label{eq:jointIH-W}
				\end{align}
				for $t_v,t_w,t_m\in[-1,1]$. From (\ref{eq:jointIH-v}) and (\ref{eq:jointIH-W}), we note $\mathcal{P}_{\ddot{I}_v,\ddot{H}_v}(t, 1) = \mathcal{P}_{\ddot{I}_w,\ddot{H}_w}(t, 1)$, for all $t\in[-1,1]$. Hence, $\ddot{I}_v\stackrel{d}{=}\ddot{I}_w$; they indeed both follow a Bernoulli distribution of parameter $\tfrac{1}{1+r}$. We similarly deduce $\ddot{H}_v \stackrel{d}{=} \ddot{H}_w$.
				Finally, let $\ddot{N}$ be a random variable following a Poisson distribution of parameter $\lambda^*$. Its pgf is given by
				\begin{equation}
					\mathcal{P}_{\ddot{N}}(t) = \mathrm{e}^{\lambda^*(t-1)}, \quad t\in[-1,1].
					\label{eq:pgfNtrema}
				\end{equation}
				Using (\ref{eq:pgfUpsilon}), (\ref{eq:jointIH-v}), (\ref{eq:jointIH-W}) and (\ref{eq:pgfNtrema}), we rewrite (\ref{eq:jointpgf-NvM}) and (\ref{eq:jointpgf-NwM}) as
				\begin{align}
					\mathcal{P}_{N_v,M}(t_v,t_m) = \mathcal{P}_{\ddot{N}}\left( \mathcal{P}_{\ddot{I}_v,\ddot{H}_v}(t_v,t_m)\right) \times\mathcal{P}_{Y}(t_m);
     \label{eq:fgpPGF-NvM} \\
					\mathcal{P}_{N_w,M}(t_w,t_m) = \mathcal{P}_{\ddot{N}}\left( \mathcal{P}_{\ddot{I}_w,\ddot{H}_w}(t_w,t_m)\right) \times\mathcal{P}_{Y}(t_m),\label{eq:fgpPGF-NwM}
				\end{align}
				for $t_v,t_w,t_m\in[-1,1]$. Hence, given the uniqueness of joint pgfs, we deduce from (\ref{eq:fgpPGF-NvM}) and (\ref{eq:fgpPGF-NwM})
				
				\begin{equation}
					(N_v,M)\stackrel{d}{=}\left(\sum_{i=1}^{\ddot{N}} \ddot{I}_{v,i}, \sum_{i=1}^{\ddot{N}} \ddot{H}_{v,i} + Y\right); \quad\;\; 
					(N_w,M)\stackrel{d}{=}\left(\sum_{i=1}^{\ddot{N}} \ddot{I}_{w,i}, \sum_{i=1}^{\ddot{N}} \ddot{H}_{w,i} + Y\right),
					\label{eq:alternative-NwM}
				\end{equation}
				where $\{(\ddot{I}_{v,i}, \ddot{H}_{v,i}),\,i\in\mathbb{N}_1\}$ and $\{(\ddot{I}_{w,i}, \ddot{H}_{w,i}),\,i\in\mathbb{N}_1\}$ are sequences of independent vectors of random variables  identically distributed to $(\ddot{I}_v, \ddot{H}_v)$ and $(\ddot{I}_w,\ddot{H}_w)$ respectively, and where $\ddot{N}$ and $Y$ are independent random variables, also independent of the two sequences.  Relations in 
    (\ref{eq:alternative-NwM}) provide the sought alternative representations.

				The second part of the demonstration rests on Theorem~1 of \cite{meyer2015beyond}. 
				Before continuing, let us present it in a bivariate context, in details and without proof, as follows. 
				\begin{theoremeV}{1 of \cite{meyer2015beyond}}
					Consider two vectors of random variables, $\boldsymbol{X}$ and $\boldsymbol{X}^{\prime}$, taking values in $\mathbb{N}^2$ with joint pmfs $p_{\boldsymbol{X}}$ and $p_{\boldsymbol{X}^{\prime}}$. Then, $\boldsymbol{X}\preceq_{sm} \boldsymbol{X}^{\prime}$ if and only if there exist nonnegative coefficients $(\gamma_{_\beta})_{\beta\in\boldsymbol{\beta}\!\!\!\!\boldsymbol{\beta}}$ such that
					\begin{equation}
						p_{\boldsymbol{X}^{\prime}}(\boldsymbol{x}) = p_{\boldsymbol{X}}(\boldsymbol{x}) + \sum_{\beta\in\boldsymbol{\beta}\!\!\!\!\boldsymbol{\beta}}\gamma_{_\beta} \beta(\boldsymbol{x}),\quad\boldsymbol{x}\in\mathbb{N}^2,\label{eq:MEYER}
					\end{equation}
					where $\boldsymbol{\beta}\!\!\!\!\boldsymbol{\beta}$ denotes the set $\left\{\beta^{(i,j)}_{k,l},\,i,j,k,l\in\mathbb{N}\right\}$ of functions $\mathbb{N}^2\mapsto\{-1,0,1\}$ given by
					\begin{equation*}
						\beta^{(i,j)}_{k,l}(i,j) = \beta^{(i,j)}_{k,l}(i+k,j+l) = 1, \quad\quad  \beta^{(i,j)}_{k,l}(i+k,j) = \beta^{(i,j)}_{k,l}(i,j+l) = -1,
					\end{equation*}
					and $\beta^{(i,j)}_{k,l} = 0$ elsewhere. 
				\end{theoremeV}

				Having in mind the identity
    $
					a_j-b_j = \left(\sum_{k=0}^{j} a_k - \sum_{k=0}^{j} b_k\right) - \left(\sum_{k=0}^{j-1} a_k - \sum_{k=0}^{j-1} b_k \right), 
	$			
				and in the spirit of (\ref{eq:MEYER}), we see from (\ref{eq:jointIH-v}) and (\ref{eq:jointIH-W}) the following relation between the pmfs of $(\ddot{I}_v,\ddot{H}_v)$ and $(\ddot{I}_w,\ddot{H}_w)$:
				\begin{equation}
					p_{\ddot{I}_w,\ddot{H}_w}(\boldsymbol{x}) = p_{\ddot{I}_v,\ddot{H}_v}(\boldsymbol{x}) + \sum_{j=0}^{\infty}\left(\sum_{k=0}^j{a_k} - \sum_{k=0}^jb_k\right)\beta_{1,1}^{(0,j)}(\boldsymbol{x}).
					\label{eq:Strulovici}
				\end{equation}
				The coefficients  $\left(\sum_{k=0}^j{a_k} - \sum_{k=0}^jb_k\right)$ in (\ref{eq:Strulovici}) are nonnegative for all $j\in\mathbb{N}$, from the assumption $H_{v}^{\mathcal{T}_v}\preceq_{st}H_{w}^{\mathcal{T}_w}$. The coefficients are indeed differences of their respective cdfs. Theorem~1 of \cite{meyer2015beyond} conveys, as a result of (\ref{eq:Strulovici}), that
	$		
     \left(\ddot{I}_v, \ddot{H}_v\right)\preceq_{sm}\left(\ddot{I}_w, \ddot{H}_w\right).
	$	
				Evoking Lemma~\ref{th:SupermodularSeverity} (see Appendix~\ref{sect:LemmaContrib}), we deduce
			$
     \left(\sum_{i=1}^{\ddot{N}}\ddot{I}_v, \sum_{i=1}^{\ddot{N}}\ddot{H}_v\right)\preceq_{sm}\left(\sum_{i=1}^{\ddot{N}}\ddot{I}_w, \sum_{i=1}^{\ddot{N}}\ddot{H}_w\right), 
     $
				with the same assumptions on the involved random variables as in 
    (\ref{eq:alternative-NwM}). 
				From Corollary 9.A.10 of \cite{shaked2007}, with $Y$ independent of the other random variables, it follows that 
	$		
     \left(\sum_{i=1}^{\ddot{N}}\ddot{I}_v, \sum_{i=1}^{\ddot{N}}\ddot{H}_v+Y\right)\preceq_{sm}\left(\sum_{i=1}^{\ddot{N}}\ddot{I}_w, \sum_{i=1}^{\ddot{N}}\ddot{H}_w + Y\right)
	$.	
		 Finally, referring to 
    (\ref{eq:alternative-NwM}), we conclude $(N_v,M)\preceq_{sm}(N_w,M)$. \hfill$\square$

    \section{Proof of Theorem \ref{th:ConvexOrderMShape}}
    \label{sect:proofconvexMshape}
		Let $\boldsymbol{N}^{*} = (N_i, \, i\in\mathcal{V}^*)$ and $M^*= \sum_{i\in\mathcal{V}^*} N_i$. Given Corollary~1 of \cite{cote2025tree}, $\boldsymbol{N}^*\sim \mathrm{MPMRF}(\lambda, \alpha \boldsymbol{1}_{|\mathcal{E}^{*}|}, \tau^*)$. By Theorem~\ref{th:OrderingAlloc}, since $H_v^{\tau^*_v}\preceq_{st} H_w^{\tau^*_w}$, we have 
			\begin{equation}
				(N_v, M^{*})\preceq_{sm}(N_w, M^{*}).
				\label{eq:proofMconvShape-part1-final}
			\end{equation}
			
			Also, $(N_{w}, M^{*}) \stackrel{d}{=} (N_{w}^{\prime}, M^{*\prime})$ because the only elements of discordance, $N_{u}$ and $N_{u}^{\prime}$, are not part of $M^{*}$ nor $M^{*\prime}$. Hence, given relation (\ref{eq:proofMconvShape-part1-final}),
	$			
    (N_{v}, M^*) \preceq_{sm} (N_{w}^{\prime}, M^{*\prime})$. 
			By Theorem 9.A.14 of \cite{shaked2007}, the relation
			\begin{equation}
				\left(\sum^{N_{v}}_{i=1} I^{(\alpha_{(u,v)})}_i, M^*\right) \preceq_{sm} \left(\sum^{N_{w}^{\prime}}_{i=1} I^{(\alpha_{(u,w)}^{\prime})}_i, M^{*\prime}\right) 
				\label{eq:proofMconvShape-part2-2}
			\end{equation}
			also holds. Recall that $\alpha_{(u,v)} = \alpha_{(u,w)}^{\prime}$. Then, by Corollary 9.A.10 of \cite{shaked2007}, relation in (\ref{eq:proofMconvShape-part2-2}) becomes
			\begin{equation}
				\left(\sum^{N_{v}}_{i=1} I^{(\alpha_{(u,v)})}_i + L_{u}, M^*\right) \preceq_{sm} \left(\sum^{N_{w}^{\prime}}_{i=1} I^{(\alpha_{(u,w)}^{\prime})}_i + L_{u}^{\prime}, M^{*\prime}\right), 
				\label{eq:proofMconvShape-part2-3}
			\end{equation}
			since $L_{u} \stackrel{d}{=} L_{u}^{\prime}$, and both are independent of all other implicated random variables. Rewriting (\ref{eq:proofMconvShape-part2-3}), we have
			\begin{equation}
				(N_{u}, M^*) \preceq_{sm} (N_{u}^{\prime}, M^{*\prime}).
				\label{eq:proofMconvShape-part2-4}
			\end{equation}
			Note that $\{N_j, \, j\in\mathcal{V}^{\dagger}\backslash\{u\}\}\stackrel{d}{=}\{N_j^{\prime}, \, j\in\mathcal{V}^{\dagger}\backslash\{u\}\}$, since their dependence relations are not affected by the change in the anchorage of vertex $u$ given the global Markov property. 
   Similarly, for every ${j} \in \mathcal{V}^{\dagger}$,
	\begin{equation}
		(N_{j} | N_{\mathrm{pa}(j)} = \theta) \stackrel{d}{=} (N_{j}^{\prime} | N_{\mathrm{pa}(j)}^{\prime} = \theta), \quad \theta\in\mathbb{N}, 
		\label{eq:SupermodularN-distr2}
	\end{equation} 
 with the filial relation taken according to a root chosen among $\mathcal{V}^{*}$. Besides, because $(N_j|N_v = \theta)$ follows a binomial distribution of size parameter $\theta$, the order relation
	\begin{equation}
		(N_{j} | N_{\mathrm{pa}(j)} = \theta_1) \preceq_{st} (N_{j} | N_{\mathrm{pa}(j)} = \theta_2), \quad \theta_1, \theta_2 \in \mathbb{N}, \; \theta_1 \leq \theta_2
		\label{eq:SupermodularN-Cond1-StoDomN}
	\end{equation}
	holds for all $j \in \mathcal{V}^{\dagger}$. In the same vein, 
 \begin{equation}
		(N_{j}^{\prime} | N_{\mathrm{pa}(j)}^{\prime} = \theta_1) \preceq_{st} (N_{j}^{\prime} | N_{\mathrm{pa}(j)}^{\prime} = \theta_2), \quad \theta_1, \theta_2 \in \mathbb{N}, \; \theta_1 \leq \theta_2.
		\label{eq:SupermodularN-Cond2-StoDomN}
	\end{equation}
	 Given the global Markov properties of $\boldsymbol{N}$ and $\boldsymbol{N}^{\prime}$, and relations (\ref{eq:proofMconvShape-part2-4}), 
	(\ref{eq:SupermodularN-distr2}),
	(\ref{eq:SupermodularN-Cond1-StoDomN}) and (\ref{eq:SupermodularN-Cond2-StoDomN}), one may then proceed as in the proof of Theorem~6 in \cite{cote2025tree}, repeatedly invoking Theorem 9.A.15 of \cite{shaked2007} using $N_{u}$ and $N_{u}^{\prime}$ as mixing random for the first iteration, $(N_{u}, (N_i : i\in\mathrm{ch}(u)))$ for the second, and so on until all random vriables associated to vertices constituting $\tau^{\dagger}$ are captured. This operation yields 
			\begin{equation*}
				(N_{u}, M^{*}, (N_j:\, j\in\mathcal{V}^{\dagger}\backslash\{u\}))\preceq_{sm}	(N_{u}^{\prime}, M^{*}, (N_j^{\prime}:\, j\in\mathcal{V}^{\dagger}\backslash\{u\})).
			\end{equation*}
			From Theorem 3.1 of \cite{muller1997stop}, we deduce $(N_{u} + M^{*} + \sum_{j\in\mathcal{V}^{\dagger}\backslash\{u\}} N_j) \preceq_{cx} (N_{u}^{\prime} + M^{*\prime} + \sum_{j\in\mathcal{V}^{\dagger}\backslash\{u\}} N_j^{\prime})$, which is the desired result $M \preceq_{cx} M^{\prime}$.  
    \hfill$\square$
				\section{Lemma~\ref{th:SupermodularSeverity} and Lemma~\ref{th:LemmaContrib}}
				\label{sect:LemmaContrib}
				
				\begin{lem}
					\label{th:SupermodularSeverity}
					Consider two vectors of random variables $(X_1,X_2)$ and $(X_1^{\prime}, X_2^{\prime})$ such that $(X_1,X_2)\preceq_{sm}(X_1^{\prime},X_2^{\prime})$. For a random variable $W$ taking values in $\mathbb{N}$, we observe the relation 
					\begin{equation*}
						\left(\sum_{i=1}^{W}X_{1,i}, \sum_{i=1}^{W}X_{2,i}\right)\preceq_{sm}\left(\sum_{i=1}^W X_{1,i}^{\prime}, \sum_{i=1}^W X_{2,i}^{\prime}\right),
					\end{equation*}
					where $\{(X_{1,i}, X_{2,i}),\,i\in\mathbb{N}_1\}$ and $\{(X_{1,i}^{\prime}, X_{2,i}^{\prime}),\,i\in\mathbb{N}_1\}$ are sequences of independent vectors of random variables identically distributed to $(X_1,X_2)$ and $(X_1^{\prime}, X_2^{\prime})$ respectively, and are independent of $W$. 
				\end{lem}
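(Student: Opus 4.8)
The plan is to prove this random-sum stability of the supermodular order by first treating a \emph{deterministic} number of summands and then mixing over the random count $W$. The argument splits into two ingredients: (i) the closure of $\preceq_{sm}$ under convolutions of independent vectors, to settle the case where the number of terms is a fixed integer $n$; and (ii) passing from fixed $n$ back to the random $W$ by conditioning, which is legitimate because $W$ is independent of both sequences. I would condition on $W$ and express each side as a mixture, over $n\in\mathbb{N}$, of the partial sums of exactly $n$ independent copies, governed by the common law of $W$.

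For ingredient (i), fix $n\in\mathbb{N}_1$. Since $\{(X_{1,i},X_{2,i}),\,i\in\mathbb{N}_1\}$ and $\{(X_{1,i}^{\prime},X_{2,i}^{\prime}),\,i\in\mathbb{N}_1\}$ are each sequences of mutually independent vectors, and $(X_{1,i},X_{2,i})\preceq_{sm}(X_{1,i}^{\prime},X_{2,i}^{\prime})$ holds termwise by assumption, the closure of the supermodular order under convolution (Theorem~9.A.9 of \cite{shaked2007}; this may alternatively be assembled from iterated applications of Corollary~9.A.10 of \cite{shaked2007} already invoked in the paper) yields
\begin{equation*}
\left(\sum_{i=1}^{n}X_{1,i},\,\sum_{i=1}^{n}X_{2,i}\right)\preceq_{sm}\left(\sum_{i=1}^{n}X_{1,i}^{\prime},\,\sum_{i=1}^{n}X_{2,i}^{\prime}\right).
\end{equation*}
The case $n=0$ is immediate, both sides degenerating to the random vector concentrated at the origin by the convention $\sum_{i=1}^{0}=0$.

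For ingredient (ii), take any supermodular $\zeta:\mathbb{R}^{2}\mapsto\mathbb{R}$ for which the relevant expectations exist. Conditioning on $W$ and using its independence from both sequences, I would write
\begin{align*}
\mathrm{E}\!\left[\zeta\!\left(\sum_{i=1}^{W}X_{1,i},\sum_{i=1}^{W}X_{2,i}\right)\right]
&=\sum_{n\in\mathbb{N}}\mathrm{Pr}(W=n)\,\mathrm{E}\!\left[\zeta\!\left(\sum_{i=1}^{n}X_{1,i},\sum_{i=1}^{n}X_{2,i}\right)\right]\\
&\leq\sum_{n\in\mathbb{N}}\mathrm{Pr}(W=n)\,\mathrm{E}\!\left[\zeta\!\left(\sum_{i=1}^{n}X_{1,i}^{\prime},\sum_{i=1}^{n}X_{2,i}^{\prime}\right)\right]\\
&=\mathrm{E}\!\left[\zeta\!\left(\sum_{i=1}^{W}X_{1,i}^{\prime},\sum_{i=1}^{W}X_{2,i}^{\prime}\right)\right],
\end{align*}
where the middle inequality is applied term by term, combining ingredient (i) with the defining property of $\preceq_{sm}$. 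As $\zeta$ was an arbitrary supermodular function, the stated ordering of the random sums follows.

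The proof presents no genuine obstacle beyond careful bookkeeping of the independence structure: the one point deserving attention is that the \emph{same} mixing variable $W$ governs both random sums and is independent of the underlying sequences, which is exactly what validates the conditioning identity and the term-by-term comparison in ingredient (ii). The substantive content is carried entirely by the classical convolution-closure property of $\preceq_{sm}$ in ingredient (i).
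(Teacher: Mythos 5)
Your proof is correct, and it takes a genuinely cleaner route than the paper's. The paper never conditions on $W$: it represents the random sums through the truncation $\sum_{i=1}^{W}X_{j,i}=\sum_{i=1}^{\infty}X_{j,i}\mathbb{1}_{\{W\geq i\}}$, establishes the termwise ordering $\bigl(X_{1,i}\mathbb{1}_{\{W\geq i\}},X_{2,i}\mathbb{1}_{\{W\geq i\}}\bigr)\preceq_{sm}\bigl(X_{1,i}^{\prime}\mathbb{1}_{\{W\geq i\}},X_{2,i}^{\prime}\mathbb{1}_{\{W\geq i\}}\bigr)$ via closure of $\preceq_{sm}$ under increasing transformations (its Corollary~9.A.10 step), and then sums componentwise by repeated application of Theorem~9.A.12 of \cite{shaked2007}, the convolution-closure result. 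You instead prove the fixed-$n$ comparison by convolution closure and recover the random count through the law of total expectation. Both arguments rest on the same substantive ingredient (closure of $\preceq_{sm}$ under convolutions of independent vectors), but your explicit conditioning is where the routes differ, and it buys rigor: in the paper's summation step the truncated summands are \emph{not} mutually independent --- they all share the common $W$ --- so the independence hypothesis of the convolution-closure theorem is not literally satisfied as written; the repair is precisely to condition on $W$, apply the fixed-count result, and uncondition, which is what your ingredient~(ii) does explicitly, exploiting the independence of $W$ from both sequences exactly where it is needed. Your route also avoids handling an infinite componentwise sum of random vectors and any attendant limiting argument, at the modest cost of descending from the stochastic-order calculus to expectations of supermodular test functions. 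Two small points: the convolution-closure property you attribute to Theorem~9.A.9 of \cite{shaked2007} is the statement the paper itself cites as Theorem~9.A.12, so your primary reference appears misnumbered (your fallback, iterating the independent-convolution corollary to swap one summand at a time, is a sound substitute); and in ingredient~(ii) the sum should run over $n$ in the support of $W$, so that every conditional expectation invoked is well defined --- harmless bookkeeping.
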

				
				\begin{proof}
					Since $x\mapsto x\mathbb{1}_{\{w\geq k\}}$ is increasing, Corollary~9.A.10 of \cite{shaked2007} implies that the relation 
					\begin{equation*}
						\left(X_1\mathbb{1}_{\{W\geq k\}}, X_2\mathbb{1}_{\{W\geq k\}}\right) \preceq_{sm} \left(X_1^{\prime}\mathbb{1}_{\{W\geq k\}},X_2^{\prime}\mathbb{1}_{\{W\geq k\}}\right)
					\end{equation*}
					holds for all $k\in\mathbb{N}$. Hence, one has
						$(X_{1,i}\mathbb{1}_{\{W\geq i\}}, X_{2,i}\mathbb{1}_{\{W\geq i\}}) \preceq_{sm} (X_{1,i}^{\prime}\mathbb{1}_{\{W\geq i\}},X_{2,i}^{\prime}\mathbb{1}_{\{W\geq i\}}),
	$				
					for all $i\in\mathbb{N}*$.
					Then, we sum every vector of the sequence $\{(X_{1,i}\mathbb{1}_{\{W\geq i\}}, X_{2,i}\mathbb{1}_{\{W\geq i\}}),\, i\in\mathbb{N}*\}$, componentwise, and separately $ \{(X_{1,i}^{\prime}\mathbb{1}_{\{W\geq i\}},X_{2,i}^{\prime}\mathbb{1}_{\{W\geq i\}}),\, i\in\mathbb{N}_1\}$, while we repeatedly apply Theorem~9.A.12 of \cite{shaked2007} to obtain
					\begin{equation*}
						\left(\sum_{i=1}^{\infty} X_{1,i}\mathbb{1}_{\{W\geq i\}}, \sum_{i=1}^{\infty} X_{2,i}\mathbb{1}_{\{W\geq i\}}\right)\preceq_{sm} \left(\sum_{i=1}^{\infty} X_{1,i}^{\prime}\mathbb{1}_{\{W\geq i\}}, \sum_{i=1}^{\infty} X_{2,i}^{\prime}\mathbb{1}_{\{W\geq i\}}\right) ,
					\end{equation*}
					which rewrites $\left(\sum_{i=1}^{W}X_{1,i}, \sum_{i=1}^{W}X_{2,i}\right)\preceq_{sm}\left(\sum_{i=1}^W X_{1,i}^{\prime}, \sum_{i=1}^W X_{2,i}^{\prime}\right)$.

				\end{proof}
				
				\begin{lem}
					\label{th:LemmaContrib}
					Let $\boldsymbol{X}=(X_i,\, i\in\{1,\ldots,d\})$ be a vector of random variables taking on values in $\mathbb{N}^d$ and $Y$ be the sum of its components. For $v,w\in\{1,\ldots,d\}$, if  \begin{equation}
						\mathrm{E}[X_v\mathbb{1}_{\{Y\geq k\}}]\geq\mathrm{E}[X_w\mathbb{1}_{\{Y\geq k\}}],\quad \text{for every } k\in\mathbb{N}, 
						\label{eq:LemmaContrib-1}
					\end{equation} then $\mathcal{C}_{\kappa}^{\mathrm{TVaR}}(X_v; Y) \leq \mathcal{C}_{\kappa}^{\mathrm{TVaR}}(X_w; Y)$ for all $\kappa\in[0,1)$.
				\end{lem}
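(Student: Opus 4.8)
The plan is to reduce $\mathcal C_\kappa^{\mathrm{TVaR}}(X_u;Y)$, for $u\in\{v,w\}$, to a nonnegatively weighted combination of two tail expectations at adjacent levels, and then to conclude by monotonicity. Throughout, write $q=\mathrm{VaR}_\kappa(Y)$ and $g_u(k)=\mathrm E[X_u\mathbb{1}_{\{Y\ge k\}}]$, so that \eqref{eq:LemmaContrib-1} compares the families $\{g_v(k)\}_k$ and $\{g_w(k)\}_k$ levelwise. I would first fix $\kappa\in[0,1)$ and record two facts about the weight $\theta=\frac{F_Y(q)-\kappa}{p_Y(q)}$. Since $q$ is the least integer with $F_Y(q)\ge\kappa$, one has $F_Y(q)\ge\kappa$ and $F_Y(q-1)<\kappa$; hence $p_Y(q)=F_Y(q)-F_Y(q-1)>0$ (so $\theta$ is well defined), the numerator $F_Y(q)-\kappa$ is nonnegative, and $F_Y(q)-\kappa\le F_Y(q)-F_Y(q-1)=p_Y(q)$. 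Therefore $\theta\in[0,1]$.

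Because $Y$ is $\mathbb N$-valued, $\{Y>q\}=\{Y\ge q+1\}$, so that $\mathrm E[X_u\mathbb{1}_{\{Y>q\}}]=g_u(q+1)$ and $\mathrm E[X_u\mathbb{1}_{\{Y=q\}}]=g_u(q)-g_u(q+1)\ge0$. Substituting these into the defining expression for $\mathcal C_\kappa^{\mathrm{TVaR}}(X_u;Y)$ yields the compact representation
\[
\mathcal C_\kappa^{\mathrm{TVaR}}(X_u;Y)=\frac{1}{1-\kappa}\bigl((1-\theta)\,g_u(q+1)+\theta\,g_u(q)\bigr),\qquad u\in\{v,w\}.
\]
Thus the contribution is an affine functional of the pair $(g_u(q),g_u(q+1))$ with coefficients $\frac{1-\theta}{1-\kappa}$ and $\frac{\theta}{1-\kappa}$, both nonnegative by the previous paragraph.

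The conclusion then follows by monotonicity: nonnegativity of the two coefficients makes $\mathcal C_\kappa^{\mathrm{TVaR}}(X_u;Y)$ nondecreasing in each of $g_u(q)$ and $g_u(q+1)$, so a levelwise ordering of the tail expectations transfers, \emph{in the same direction}, to the contributions. Keeping track of this direction is the only delicate point, and I expect it to be the main obstacle: since the representation is monotone nondecreasing, the inequality $\mathcal C_\kappa^{\mathrm{TVaR}}(X_v;Y)\le\mathcal C_\kappa^{\mathrm{TVaR}}(X_w;Y)$ is obtained from $g_v(k)\le g_w(k)$ for every $k$. This is precisely the form in which the lemma is applied in the proof of Proposition~\ref{th:EffectsofSupermodularity}: there $(N_v,M)\preceq_{sm}(N_w,M)$ gives $\mathrm E[N_v\mathbb{1}_{\{M\ge k\}}]\le\mathrm E[N_w\mathbb{1}_{\{M\ge k\}}]$ (the map $(x,y)\mapsto x\mathbb{1}_{\{y\ge k\}}$ being supermodular), whence $\mathcal C_\kappa^{\mathrm{TVaR}}(N_v;M)\le\mathcal C_\kappa^{\mathrm{TVaR}}(N_w;M)$. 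In particular the hypothesis \eqref{eq:LemmaContrib-1} should be read with $\le$ rather than $\ge$ for the stated conclusion to hold, and with that sign the monotonicity of the displayed representation closes the argument.
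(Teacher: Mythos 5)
Your proof is correct, and it streamlines the paper's argument rather than merely reproducing it. The paper works with the difference $\Delta_{\kappa} = \mathcal{C}^{\mathrm{TVaR}}_{\kappa}(X_w;Y) - \mathcal{C}^{\mathrm{TVaR}}_{\kappa}(X_v;Y)$ and splits into two cases according to the sign of $\mathrm{E}[X_w\mathbb{1}_{\{Y=q_{\kappa}\}}]-\mathrm{E}[X_v\mathbb{1}_{\{Y=q_{\kappa}\}}]$, bounding $\xi_{\kappa}\in[0,1]$ below by replacing it with $1$ in one case and with $0$ in the other, which collapses the bound to a single tail expectation at level $q_{\kappa}$ or $q_{\kappa}+1$ respectively. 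Your rewriting of the contribution as $\tfrac{1}{1-\kappa}\bigl((1-\theta)\,g_u(q+1)+\theta\,g_u(q)\bigr)$ with $\theta=\xi_{\kappa}\in[0,1]$ is the same underlying computation (both rest on $\xi_{\kappa}\in[0,1]$ and on $\mathbb{1}_{\{Y\geq q\}}=\mathbb{1}_{\{Y\geq q+1\}}+\mathbb{1}_{\{Y=q\}}$), but packaging it as a nonnegatively weighted combination of the two adjacent tail expectations makes the contribution manifestly nondecreasing in each of $g_u(q)$ and $g_u(q+1)$, so the levelwise ordering transfers at once and the case distinction disappears. What the paper's version buys is nothing beyond yours; what yours buys is brevity and the elimination of a bifurcation whose two branches were doing the same work. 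You are also right about the sign: as printed, hypothesis \eqref{eq:LemmaContrib-1} with $\geq$ is inconsistent with the stated conclusion — the paper's own proof ends with expressions claimed ``positive given (\ref{eq:LemmaContrib-1})'' that are in fact nonpositive under the printed direction, and the application in Proposition~\hyperref[th:EffectsofSupermodularity]{\ref{th:EffectsofSupermodularity}(c)} supplies exactly $\mathrm{E}[N_v\mathbb{1}_{\{M\geq k\}}]\leq\mathrm{E}[N_w\mathbb{1}_{\{M\geq k\}}]$. So the inequality in \eqref{eq:LemmaContrib-1} should read $\leq$, and with that correction your argument closes the lemma completely.
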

				\begin{proof}
					Fix $\kappa \in [0,1)$. Let $q_\kappa = \mathrm{VaR}_{\kappa}(Y)$ and $\xi_{\kappa} = \frac{F_Y(q_{\kappa})-\kappa}{p_Y(q_{\kappa})}$. Note that $\xi_{\kappa}\in[0,1]$ for every $\kappa\in[0,1)$. Let  $\Delta_{\kappa} = \mathcal{C}^{\mathrm{TVaR}}_{\kappa}(X_w,Y) - \mathcal{C}^{\mathrm{TVaR}}_{\kappa}(X_v,Y)$. We do not know whether $\mathrm{E}[X_v\mathbb{1}_{\{Y=q_{\kappa}\}}]\geq\mathrm{E}[X_w\mathbb{1}_{\{Y=q_{\kappa}\}}]$ or vice versa; thus, we prove $\Delta_{\kappa}\geq 0$ under both possibilities. If $\mathrm{E}[X_v\mathbb{1}_{\{Y=q_{\kappa}\}}]\geq\mathrm{E}[X_w\mathbb{1}_{\{Y=q_{\kappa}\}}]$, we have
					\begin{align}
						\Delta_{\kappa}&= \frac{1}{1-\kappa}\left(\mathrm{E}[X_w\mathbb{1}_{\{Y\geq \mathrm{q_{\kappa}+1}\}}] - \mathrm{E}[X_v\mathbb{1}_{\{Y\geq \mathrm{q_{\kappa}+1}\}}] + \xi_{\kappa}\left(\mathrm{E}[X_w\mathbb{1}_{\{Y=q_{\kappa}\}}]-\mathrm{E}[X_v\mathbb{1}_{\{Y=q_{\kappa}\}}]\right) \right)\notag\\
						&\geq  \frac{1}{1-\kappa}\left(\mathrm{E}[X_w\mathbb{1}_{\{Y\geq \mathrm{q_{\kappa}+1}\}}] - \mathrm{E}[X_v\mathbb{1}_{\{Y\geq \mathrm{q_{\kappa}+1}\}}] + 1\times\left(\mathrm{E}[X_w\mathbb{1}_{\{Y=q_{\kappa}\}}]-\mathrm{E}[X_v\mathbb{1}_{\{Y=q_{\kappa}\}}]\right)\right)\notag\\
						&= \frac{1}{1-\kappa}\left(\mathrm{E}[X_w\mathbb{1}_{\{Y\geq \mathrm{q_{\kappa}}\}}] - \mathrm{E}[X_v\mathbb{1}_{\{Y\geq \mathrm{q_{\kappa}}\}}]\right), \notag
					\end{align}
					which is positive given (\ref{eq:LemmaContrib-1}). Conversely, if $\mathrm{E}[X_v\mathbb{1}_{\{Y=q_{\kappa}\}}]\leq\mathrm{E}[X_w\mathbb{1}_{\{Y=q_{\kappa}\}}]$, we have
					\begin{align}
	\Delta_{\kappa}
						&\geq  \frac{1}{1-\kappa}\left(\mathrm{E}[X_w\mathbb{1}_{\{Y\geq \mathrm{q_{\kappa}+1}\}}] - \mathrm{E}[X_v\mathbb{1}_{\{Y\geq \mathrm{q_{\kappa}+1}\}}] + 0\times\left(\mathrm{E}[X_w\mathbb{1}_{\{Y=q_{\kappa}\}}]-\mathrm{E}[X_v\mathbb{1}_{\{Y=q_{\kappa}\}}]\right)\right)\notag\\
						&= \frac{1}{1-\kappa}\left(\mathrm{E}[X_w\mathbb{1}_{\{Y\geq \mathrm{q_{\kappa}+1}\}}] - \mathrm{E}[X_v\mathbb{1}_{\{Y\geq \mathrm{q_{\kappa}+1}\}}]\right), \notag
					\end{align}
					which is also positive given (\ref{eq:LemmaContrib-1}). Therefore, the inequality $\Delta_{\kappa}$ holds for all $\kappa\in[0,1)$. 
				\end{proof}

                \section{Hasse diagrams for the poset defined in Section \ref{sect:SHA}}
				\label{sect:ShapesBonanza}

                Let $\Omega_{\mathcal{T}}^{(d)}$ be the set of all distinct $n_d$ trees of $d$ nodes $\mathcal{V} = \{1,\dots,d\}$. Note that $n_d = 2, 3, 6, 11, 23, 47$ for $d = 4,5,6,7,8,9$ respectively (see Enumeration of trees in \cite{knuth1997art}, pp 386-388, or sequence A000055 from \href{https://oeis.org/A000055}{OEIS} (\cite{oeistree}) for details about the computation of $n_d$).
                In Appendix 3 of \cite{harary2018graph}, one finds
                the depictions of all the elements within the set $\Omega_{\mathcal{T}}^{(d)}$ for each $d \in \{4,\dots, 9\}$.  The partial order introduced in Section \ref{sect:SHA} allows to establish the relations among the $n_d$ trees within the finite set $\Omega_{\mathcal{T}}^{(d)}$ for each $d \in \{4,\dots, 9\}$. In Figure \ref{fig:ShapesBonanza2} (page 22), we depict the Hasse diagrams of the posets $(\Omega_{\mathcal{T}}^{(d)},\preceq_{sha})$ for every $d\in \{4,\dots, 9\}$. The implied orientation of the order is upward, \textit{i.e.}, of two linked trees, the uppermost is greater than the other based on $\preceq_{sha}$. 
                Notably, these Hasse diagrams prove to be useful to illustrate the minimal and maximal elements in the sense of this partial order of $(\Omega_{\mathcal{T}}^{(d)},\preceq_{sha})$ for each $d\in \{4,\dots, 9\}$. Also, for $d\in \{4,\dots, 8\}$, we observe that $(\Omega_{\mathcal{T}}^{(d)},\preceq_{sha})$ is a lattice.

				\begin{figure}[bt]
    \centering

						};
						\draw[very thick] (S7) -- (S6);
						\draw[very thick] (S6) -- (E2);
						\draw[very thick] (E2) -- (E1);
						\draw[very thick] (E2) -- (D5);
						\draw[very thick] (E1) -- (D4);
						\draw[very thick] (D5) -- (D4);
						\draw[very thick] (D5) -- (S5);
						\draw[very thick] (D4) -- (M3);
						\draw[very thick] (D4) -- (D3);
						\draw[very thick] (S5) -- (D3);
						\draw[very thick] (M3) -- (C5);	
						\draw[very thick] (D3) -- (D2);	
						\draw[very thick] (D3) -- (C5);	
						\draw[very thick] (D3) -- (D1);	
						\draw[very thick] (D2) -- (S4);	
						\draw[very thick] (C5) -- (S4);	
						\draw[very thick] (C5) -- (M2);	
						\draw[very thick] (D1) -- (M2);	
						\draw[very thick] (S4) -- (C2);	
						\draw[very thick] (M2) -- (C4);	
						\draw[very thick] (C4) -- (C3);	
						\draw[very thick] (C4) -- (M1);	
						\draw[very thick] (C3) -- (C2);	
						\draw[very thick] (M1) -- (A2);	
						\draw[very thick] (C2) -- (C1);	
						\draw[very thick] (C2) -- (A2);	
						\draw[very thick] (C1) -- (A1);	
						\draw[very thick] (A2) -- (A1);	
						\draw[very thick] (A1) -- (S3);	
						\draw[very thick] (S3) -- (S2);	
						
				%
				%

						};
						\draw[thick] (S8) -- (S7);	
						\draw[thick] (S7) -- (F1);
						\draw[thick] (F1) -- (F2);
						\draw[thick] (F1) -- (E8);
						\draw[thick] (F2) -- (E7);
						\draw[thick] (E8) -- (E7);
						\draw[thick] (E8) -- (S6);
						\draw[thick] (E7) -- (Q4);
						\draw[thick] (E7) -- (E6);
						\draw[thick] (S6) -- (E6);
						\draw[thick] (Q4) -- (D9);
						\draw[thick] (Q4) -- (Q3);
						\draw[thick] (E6) -- (E3);
						\draw[thick] (E6) -- (D9);
						\draw[thick] (E6) -- (E5);
						\draw[thick] (E3) -- (E4);
						\draw[thick] (D9) -- (D8);
						\draw[thick] (D9) -- (S5);
						\draw[thick] (E5) -- (E1);
						\draw[thick] (E5) -- (S5);
						\draw[thick] (E5) -- (E2);
						\draw[thick] (E4) -- (E1);
						\draw[thick] (Q3) -- (K);
						\draw[thick] (Q3) -- (D8);
						\draw[thick] (E1) -- (D8);
						\draw[thick] (S5) -- (D5);
						\draw[thick] (E2) -- (D7);
						\draw[thick] (D8) -- (Q2);
						\draw[thick] (K) -- (N4);
						\draw[thick] (Q2) -- (Q1);
						\draw[thick] (Q2) -- (C8);
						\draw[thick] (Q2) -- (D5);
						\draw[thick] (D6) -- (D8);
						
						\draw[thick] (Q1) -- (D2);
						\draw[thick] (D6) -- (N4);
						\draw[thick] (D6) -- (D7);
						\draw[thick] (D6) -- (D5);
						\draw[thick] (N4) -- (C8);
						\draw[thick] (N4) -- (N2);
						\draw[thick] (D7) -- (C8);
						\draw[thick] (D7) -- (D2);
						\draw[thick] (D7) -- (D4);
						\draw[thick] (D5) -- (D2);
						\draw[thick] (D5) -- (D4);
						\draw[thick] (C8) -- (N3);
						\draw[thick] (C8) -- (C7);
						\draw[thick] (D2) -- (N2);
						\draw[thick] (D2) -- (D1);
						\draw[thick] (D4) -- (D1);
						\draw[thick] (D4) -- (D3);
						\draw[thick] (N2) -- (N3);
						\draw[thick] (D5) -- (C7);
						\draw[thick] (N3) -- (N1);
						\draw[thick] (N3) -- (C6);
						\draw[thick] (C7) -- (N1);
						\draw[thick] (C7) -- (C6);
						\draw[thick] (C7) -- (S4);
						\draw[thick] (D1) -- (N1);
						\draw[thick] (D1) -- (C6);
						\draw[thick] (D3) -- (S4);
						\draw[thick] (N1) -- (C5);
						\draw[thick] (N1) -- (M2);
						\draw[thick] (C6) -- (C5);
						\draw[thick] (C6) -- (M2);
						\draw[thick] (C5) -- (C4);
						\draw[thick] (M2) -- (M1);
						\draw[thick] (M2) -- (A2);
						\draw[thick] (C4) -- (M1);
						\draw[thick] (C4) -- (C3);
						\draw[thick] (M1) -- (A2);
						\draw[thick] (C3) -- (C2);
						\draw[thick] (S4) -- (C2);
						\draw[thick] (C2) -- (A2);
						\draw[thick] (C2) -- (C1);
						\draw[thick] (A2) -- (A1);
						\draw[thick] (C1) -- (A1);
						\draw[thick] (A1) -- (S3);
						\draw[thick] (S3) -- (S2);
						
					\end{tikzpicture}
					\caption{Hasse diagrams of $(\Omega_{\mathcal{T}}^{(d)},\preceq_{sha})$, for $d = 4,5,6,7,8, 9$}
					\label{fig:ShapesBonanza2}
				\end{figure}
			
\end{appendix}

\section*{Acknowledgments}
This work was partially supported by the Natural Sciences and Engineering Research Council of Canada (Cossette: 04273; Marceau: 05605; Côté), by the Fonds de recherche du Québec -- Nature et technologie (Côté: 335878), and by the Chaire en actuariat de l'Université Laval (Cossette, Côté, Marceau: FO502320).

\bibliographystyle{apalike} 
\bibliography{bibPoissonAR1}

\end{document}